\newtheorem{thm}{Theorem}[section]
\newtheorem{lemma}[thm]{Lemma}
\newtheorem{claim}{Claim}[thm]
\newtheorem{theorem}[thm]{Theorem}
\newtheorem{corollary}[thm]{Corollary}
\newtheorem{fact}[thm]{Fact}
\newtheorem*{thma}{Theorem A}
\newtheorem*{corb}{Corollary B}
\newtheorem*{corc}{Corollary C}
\newtheorem*{thmd}{Theorem D}
\newtheorem*{core}{Corollary E}
\newtheorem*{etheorem}{Theorem}
\newtheorem*{econjecture}{Conjecture}
\newtheorem*{equestion}{Question}
\theoremstyle{definition}
\newtheorem{definition}[thm]{Definition}
\newtheorem{remark}[thm]{Remark}
\newcommand*\axiomfont[1]{\textsf{\textup{#1}}}
\newcommand\zfc{\axiomfont{ZFC}}
\def\s{\subseteq}
\newcommand\br{\blacktriangleright}
\newcommand\diagonalintersection{\bigtriangleup}
\DeclareMathOperator\diagonalunion{\bigtriangledown}
\setlist[enumerate,1]{label={(\roman*)}}
\DeclareMathOperator{\dom}{dom}
\DeclareMathOperator{\ord}{Ord}
\newenvironment{why}[1][Proof]{\proof[#1]\mbox{}}{\endproof}
\DeclareMathOperator{\im}{im}
\DeclareMathOperator{\rk}{rk}
\DeclareMathOperator{\p}{\mathcal P}
\DeclareMathOperator{\pom}{{\mathcal P_{\!\omega_1}}}
\subjclass[2020]{Primary 03E02; Secondary 03E04, 03E55, 05D10, 05C55}
\keywords{Ramsey theory, partition relations, PCF theory.}
\title{A Ramsey theorem for the reals}
\date{Preliminary version as of \today}
\author{Tanmay C. Inamdar}
\address{Department of Mathematics, Ben-Gurion University of the Negev, P.O.B. 653, Be’er Sheva, 84105 Israel}
\email{tci.math@protonmail.com}
\begin{document}
	\begin{abstract} 
We prove that for every colouring of pairs of reals with finitely-many colours, there is a set homeomorphic to the rationals which takes no more than two colours. This was conjectured by Galvin in 1970, and a colouring of Sierpi{\'n}ski from 1933 witnesses that the number of colours cannot be reduced to one. Previously in 1985 Shelah had shown that a stronger statement is consistent with a forcing construction assuming the existence of large cardinals. Then in 2018 Raghavan and Todor{\v c}evi{\'c} had proved it assuming the existence of large cardinals. We prove it in $\zfc$. In fact Raghavan and Todor{\v c}evi{\'c} proved, assuming more large cardinals, a similar result for a large class of topological spaces. We prove this also, again in $\zfc$. 
	\end{abstract}
	\maketitle
\section{Introduction}

We prove here the following:
\begin{thma} Let $X$ be an uncountable set of reals. Let $K$ be a positive natural number and let $c: [X]^2 \rightarrow \{0,1,\ldots, K-1\}$ be a colouring of pairs of elements of
$X$ into $K$-many colours. Then for some $Y \s X$ homeomorphic to $\mathbb Q$, $c$ takes at most $2$ colours on pairs of elements of $Y$; that is, $|c``[Y]^2| \leq 2$.\footnote{For a set $Z$, we denote by $[Z]^2$ the collection of all two element subsets of $Z$, and $|Z|$ denotes the cardinality of the set $Z$. 
Additional notation can be found in Section~\ref{sectionnotation}.}
\end{thma}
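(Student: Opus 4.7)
The plan is to split the argument into a topological reduction and a combinatorial core. First I would reduce to the case $|X| = \aleph_1$ with every point of $X$ a two-sided condensation point of $X$ in $\mathbb{R}$. This is a standard Cantor--Bendixson step: the set of condensation points of an uncountable subset of $\mathbb{R}$ is perfect, and one may extract an $\aleph_1$-sized dense-in-itself subset while preserving the colouring.

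Next, I would establish a local stabilization. For each $x \in X$ and each open neighbourhood $U$ of $x$, pigeonhole on the $K$ colours forces some $i$ with $\{y \in X \cap U : c(\{x,y\}) = i\}$ uncountable; thus each $x$ has a nonempty set $S(x) \subseteq \{0, \ldots, K-1\}$ of \emph{persistent} colours, namely those that cluster at $x$ from both sides in every neighbourhood. Since there are only finitely many possibilities, I may restrict to an uncountable subset on which $S(x)$ is a fixed set $S$. The combinatorial core is then to construct a countable dense-in-itself $Y \subseteq X$ on which $c$ takes only two values. I would aim for a tree-style fusion: at stage $n$ maintain a finite approximation $F_n \subseteq X$ and shrinking open intervals around its points, extending $F_n$ to $F_{n+1}$ by adding points close to each $x \in F_n$ while controlling colours. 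Sierpi{\'n}ski's characterization of $\mathbb{Q}$ as the unique countable metric space without isolated points then finishes the argument.

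The main obstacle is enforcing a global restriction to two colours, as opposed to the weaker $|S|$-colour restriction that local pigeonhole naturally supplies. Pairs between newly added points at different levels of the fusion can a priori use any colour in $S$, and some deeper combinatorial input is needed to cut this down to two. In the prior large-cardinal proofs of Shelah and of Raghavan--Todor{\v c}evi{\'c}, this input comes from a normal ultrafilter or an analogous precipitous/generic-ideal structure; in \zfc, these must be replaced by something constructed from \textsf{PCF}-theoretic machinery (scales on singulars of cofinality $\omega$, club guessing sequences, or coherent sequences driving minimal walks on $\omega_1$). I therefore expect the technical heart of the paper to be a \zfc\ partition principle of roughly the form: every finite colouring of $[\omega_1]^2$ admits a subset that is ``order-embedded as a dense-in-itself set'' with respect to any fixed reasonable enumeration of an $\aleph_1$-sized set of reals, using at most two colours. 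The delicate point is precisely the matching between the combinatorial notion of ``dense-in-itself'' used on $\omega_1$ and the metric one inherited from $\mathbb{R}$, which seems to be the barrier that both forces the large-cardinal input in previous work and demands a \zfc\ replacement here.
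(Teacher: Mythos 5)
Your outline correctly identifies the shape of the argument --- a topological normalization, a local stabilization step, a tree-style fusion of finite approximations whose union is a countable dense-in-itself set, and a Sierpi{\'n}ski finish --- and you correctly flag the precise obstruction: local pigeonhole gives you a set $S(x)$ of ``persistent'' colours, but nothing prevents $|S| = K$, and the cross-level pairs in the fusion can a priori realize all of $S$. However, your proposal stops exactly at the point where the real work begins. You say ``some deeper combinatorial input is needed to cut this down to two'' and then conjecture that the input is a \zfc\ partition principle built from scales on singulars of cofinality $\omega$, club-guessing sequences, or walks on $\omega_1$. None of these are used. What the paper actually does is work with fine normal ideals $\mathcal I$ on $\pom(H(\Omega))$ rather than with a fixed $\aleph_1$-enumeration of $X$: the ``persistent colour'' idea is replaced by a notion of (weak) $(i,j)$-saturation of pairs of $\mathcal I$-positive sets, where the key point to which colours concentrate is $F(M) \in \overline{M\cap X}\setminus M$ for countable elementary $M \prec H(\Omega)$, extracted via Fleissner's characterization of non-left-separated spaces. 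The reduction to two colours is then achieved not by a pigeonhole on persistent colours but by a maximality/exhaustion argument on a finite lattice of ``unsaturation patterns'' (Lemma~\ref{findinga}), combined with a normality argument (Lemma~\ref{onestep}) that transfers unsaturation from a bigger ideal down to a smaller one.

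The genuinely new technical ingredient --- the \zfc\ replacement for precipitousness --- is also not in your list. It is a Galvin--Hajnal-style rank $\rk_{\mathcal I}(f)$ for $f:\pom(H(\Omega))\to\ord$ together with Shelah's $\mathbf I[f,\mathcal I]$ operator (Definition~\ref{defnJdf}, Lemma~\ref{JDFfacts}); this is PCF machinery, but of the rank-on-ideal variety, not scales, club guessing, or walks. The main lemma (Lemma~\ref{Shelahidea}) builds a tree of pairs $(S_k,T_k)$ of $\mathcal I$-positive sets with saturation and basis-stabilization side conditions, shows each subtree $\mathcal T^M$ is well-founded, and then minimizes a ``hybrid rank'' $\rk_{\mathcal I+Z}(f)$ over ``creatures'' $(\sigma, Z, f)$. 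Passing to $\mathcal J = \mathbf I[f,\mathcal I + Z]$ lets one extend $\sigma$ by one more saturated pair while strictly decreasing the hybrid rank, giving a contradiction to minimality and hence a ``winning pair'' in the sense of Definition~\ref{winnerdefinition}. That winning pair is then fed into the Raghavan--Todor{\v c}evi{\'c} fusion (Theorem~\ref{RaghavanTodorcevicconstruction}), which is the rigorous version of your ``tree-style fusion.'' So: the skeleton you describe is right, but the crucial mechanism you defer to a black box is the content of the theorem, and your guess at that mechanism is not the one the paper uses.
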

Theorem~A is the most interesting case of a more general result, Theorem~D, that we will discuss in Section~\ref{subsectiongeneral}. An immediate consequence of Theorem~A is the following Ramsey theorem for the reals.
\begin{corb} For every positive natural number $K$ and $c:[\mathbb R]^2 \rightarrow \{0,1,\ldots, K-1\}$ a colouring of pairs of reals, there is a set of reals $Y$, homeomorphic to $\mathbb Q$, such that $|c``[Y]^2| \leq 2$.
\end{corb}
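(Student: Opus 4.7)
The plan is to derive Corollary B as an immediate specialisation of Theorem A, taking $X = \mathbb{R}$. Since $\mathbb{R}$ is itself an uncountable set of reals, the hypothesis of Theorem A is satisfied, and given any colouring $c:[\mathbb{R}]^2 \rightarrow \{0,1,\ldots,K-1\}$ with $K$ a positive natural number, Theorem A produces a subset $Y \subseteq \mathbb{R}$ homeomorphic to $\mathbb{Q}$ with $|c``[Y]^2| \leq 2$. This is precisely the conclusion demanded by Corollary B, so no further argument is required.

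The only conceptual point worth emphasising is that Theorem A is the genuinely stronger statement: it handles \emph{every} uncountable set of reals, not merely $\mathbb{R}$ itself. This added generality is not trivial, since one could imagine that some uncountable $X \subseteq \mathbb{R}$ (for instance, a Luzin or Sierpi\'nski set, or a set of size $\aleph_1 < 2^{\aleph_0}$) might be too thin to support a copy of $\mathbb{Q}$ that is $2$-chromatic. The full strength of Theorem A rules this out. For the corollary, however, we simply do not need this additional generality; the work has already been done.

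Consequently there is no real obstacle in the derivation of Corollary B itself; the entire difficulty of the result lies in the proof of Theorem A. The proof of Corollary B can reasonably be written in a single sentence, and the author almost certainly does so.
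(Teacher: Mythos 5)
Your proposal is correct and matches the paper: Corollary~B is indeed obtained by taking $X=\mathbb{R}$ in Theorem~A, and the paper presents it as an ``immediate consequence'' with no further argument. The observation that Theorem~A is strictly stronger (applying to arbitrary uncountable $X \subseteq \mathbb{R}$) is accurate and consistent with the paper's framing.
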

We now commence on a short survey of infinite Ramsey theory aimed towards placing Theorem~A and Corollary~B in their appropriate context. We will deliberately avoid using the Erd{\H o}s-Hajnal-Rado arrow \cite[\S~18]{MR202613} in order to avoid overloading the reader with notation. Below, by standard set-theoretic convention, an ordinal will also represent the set of its predecessors. In particular, for $K$ a natural number, $K = \{0,1,\ldots, K-1\}$.

We start in 1928 with Ramsey's Theorem.
\begin{etheorem}[Ramsey, \cite{MR1576401}] Let $K$ be a positive natural number. For any colouring $c:[\mathbb N]^2 \rightarrow K$, there is an infinite $Y \s \mathbb N$ such that $|c``[Y]^2| = 1$.
\end{etheorem}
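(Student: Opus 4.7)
The plan is to prove Ramsey's theorem by iterated pigeonhole, constructing what is sometimes called a ``stable'' or ``end-homogeneous'' sequence. Explicitly, I would define by recursion on $i < \omega$ a strictly increasing sequence $n_0 < n_1 < \cdots$ in $\mathbb{N}$, a decreasing chain of infinite sets $\mathbb{N} = A_{-1} \supseteq A_0 \supseteq A_1 \supseteq \cdots$, and a function $\chi : \mathbb{N} \to K$. Having defined $A_{i-1}$, set $n_i := \min A_{i-1}$; partition $A_{i-1} \setminus \{n_i\}$ according to the value of $c(\{n_i, m\})$ into at most $K$ pieces, and use the fact that a finite partition of an infinite set has an infinite piece to select an infinite $A_i \subseteq A_{i-1} \setminus \{n_i\}$ on which $m \mapsto c(\{n_i, m\})$ is constant; let $\chi(i)$ be that constant value.

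Once the recursion is complete, the crucial invariant is that for all $i < j$ one has $n_j \in A_i$, and hence $c(\{n_i, n_j\}) = \chi(i)$. A second application of pigeonhole to the $K$-colouring $\chi$ then yields an infinite set $I \subseteq \mathbb{N}$ on which $\chi$ takes a single value $j^{\ast}$. Setting $Y := \{ n_i : i \in I\}$, the invariant gives $c(\{n_i, n_j\}) = j^{\ast}$ for all $i < j$ in $I$, so $c$ is constant on $[Y]^2$ with value $j^{\ast}$, as required.

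There is no genuine obstacle here: the entire argument rests on a single trivial fact (a finite partition of an infinite set has an infinite piece), invoked $\omega$-many times inside the recursion and once more at the end. As an alternative I would mention first settling the $K = 2$ case and then inducting on $K$ by grouping the colours $\{0\}$ versus $\{1, \ldots, K-1\}$; but the direct construction above handles all $K$ uniformly and is no longer. The step that is worth highlighting, because it is the one that generalises poorly to larger cardinalities and stronger partition relations (and thus motivates the technology developed later in the paper), is the appeal to pigeonhole at each stage of the recursion to extract an infinite homogeneous continuation.
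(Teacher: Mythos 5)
Your proof is correct; it is the standard iterated-pigeonhole construction of an end-homogeneous sequence followed by a final pigeonhole on the ``tail colours'' $\chi(i)$. The paper itself does not reprove Ramsey's theorem --- it is stated only as a cited classical result for historical context --- so there is no alternative argument in the paper to compare against.
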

Interpreting the above as a result about the ordertype (as opposed to the size\footnote{For a perspective centred on cardinalities, starting from Erd{\H o}s and Tarski's isolation of the notion of \emph{weakly compact cardinals}, \cite{MR167422}, see \cite{MR1994835}.}) of the natural numbers, our focus here will be attempts to generalise Ramsey's Theorem to the most basic infinite structures.

Sierpi{\'n}ski, answering a question of Knaster, constructed in 1933 the following colouring (\cite{MR1556708}). Let $<^*$ be a well-ordering of $\mathbb R$ and $<$ the usual order on $\mathbb R$. Let $c_{\mathrm s}:[\mathbb R]^2 \rightarrow 2$ be defined via: for $x< y$ from $\mathbb R$, 
$$c_{\mathrm s}(x,y):= \begin{cases} 0 &\text{ if } x<^* y; \\
1&\text{ if } y<^* x.
\end{cases}$$
Then for any uncountable set $Y \s \mathbb R$, $c_{\mathrm s}``[Y]^2 = 2$. This is because there cannot be an order-preserving embedding of the first uncountable ordinal $\omega_1$ or its reverse into $\mathbb R$, since $\mathbb R$ has a countable dense set. It follows that there is no direct analogue of Ramsey's Theorem for $\mathbb R$.\footnote{Curiously, in \cite{MR1556708} Sierpi{\'n}ski seems to be unaware of Ramsey's work, and indeed proves Ramsey's Theorem himself in the paper. That is, the infinite Ramsey theorem for pairs, as stated above. He also mentions that Lindenbaum informed him that he was aware of this fact.}

Examining Sierpi{\'n}ski's colouring $c_{\mathrm s}$, since $<^*$ is a well-ordering, we also have that on any set $Y \s \mathbb R$ order-isomorphic to $\mathbb Z$, $c_{\mathrm s}``[Y]^2 = 2$. Galvin in a letter to Laver, \cite{galvin}, observed that on any non-empty $Y \s \mathbb R$ which is dense-in-itself, that is, has no isolated points,  $c_{\mathrm s}``[Y]^2 = 2$. This uses the fact that any non-empty $Y \s \mathbb R$ which is dense-in-itself contains a set order-isomorphic to $\mathbb Q$, and hence also contains a set order-isomorphic to $\mathbb Z$.\footnote{Some details are provided in  Section~\ref{sectionnotation}.}

Since any set of reals homeomorphic to $\mathbb Q$ is dense-in-itself, it follows that the number of colours in Theorem~A and Corollary~B cannot be reduced to $1$, that is, we cannot obtain a monochromatic set.

Sierpi{\'n}ski's idea can also be applied to the rational numbers $\mathbb Q$. That is, starting from a well-ordering of $\mathbb Q$ instead, we obtain $c:[\mathbb Q]^2 \rightarrow 2$ such that for any set $Y \s \mathbb Q$ order-isomorphic to $\mathbb Z$, $c``[Y]^2 = 2$. It follows that there is no direct analogue of Ramsey's Theorem for $\mathbb Q$ as well. 

Galvin was then able to show in an unpublished result (from 1969 per \cite[pg.~71]{MR2628717}) that the Ramsey theorem holds for $\mathbb Q$ \emph{modulo this failure} (see discussion of \cite[Problem~18]{MR357122} and \cite[Chapter~4]{MR2628717} and for a proof see \cite[Section~6.3]{MR2603812}):
\begin{etheorem}[Galvin] Let $K$ be a positive natural number. For any colouring $c:[\mathbb Q]^2 \rightarrow K$, there is a set $Y \s \mathbb Q$ order-isomorphic to $\mathbb Q$ such that $|c``[Y]^2| \leq 2$.
\end{etheorem}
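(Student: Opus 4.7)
The plan is to identify $\mathbb{Q}$ with the nodes of the complete binary tree $2^{<\omega}$ under a natural dense linear order $<_{\mathbb{Q}}$, defined by declaring $s<_{\mathbb{Q}}t$ iff either $s,t$ are $\subseteq$-incomparable with $s(|s\wedge t|)=0$, or $s\subsetneq t$ with $t(|s|)=1$, or $t\subsetneq s$ with $s(|t|)=0$. Then $(2^{<\omega},<_{\mathbb{Q}})$ is a countable dense linear order without endpoints, and so is order-isomorphic to $\mathbb{Q}$ by Cantor's back-and-forth theorem. Transferring $c$ to a coloring of $[2^{<\omega}]^2$ via this identification, the goal becomes to find a subset $S\subseteq 2^{<\omega}$ such that $(S,<_{\mathbb{Q}}\upharpoonright S)$ is again a countable dense linear order without endpoints (hence $\cong\mathbb{Q}$) and $c$ takes at most two values on $[S]^2$.

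Each unordered pair $\{s,t\}\in[2^{<\omega}]^2$ carries a finite \emph{similarity type}, given by its isomorphism class as a sub-poset of $(2^{<\omega},\subseteq,<_{\mathbb{Q}})$. The central step is to construct, by a fusion argument over the levels of $2^{<\omega}$, a subset $S$ which is $<_{\mathbb{Q}}$-dense-in-itself and without $<_{\mathbb{Q}}$-endpoints, and on which $c(\{s,t\})$ depends only on the similarity type of $\{s,t\}$. The fusion proceeds in countably many stages via finite approximations: at stage $n$ a finite set $F_n\subseteq S$ is fixed together with an infinite family $A_n$ of promised extensions, and the finite Ramsey theorem is applied iteratively---once for each committed node of $F_n$ and each similarity type---to thin $A_n$ to an infinite subfamily stabilizing the relevant partial colorings. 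Density-in-itself and absence of endpoints are maintained by ensuring at each stage that every node of $F_n$ has both $<_{\mathbb{Q}}$-lower and $<_{\mathbb{Q}}$-upper extensions promised for future stages.

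The final step is to reduce the number of distinct colors on $[S]^2$ from the $\le 3$ delivered by similarity-type invariance---namely $\subseteq$-chains whose longer element extends the shorter via bit $0$, via bit $1$, and $\subseteq$-antichains---down to the required $\le 2$. This is the sharp combinatorial heart of Galvin's theorem and is the main obstacle I anticipate. I would handle it by modifying the fusion to additionally enforce symmetry under the bit-flipping involution $s\mapsto\bar{s}$ on $2^{<\omega}$, which is an order-reversing automorphism of $(2^{<\omega},<_{\mathbb{Q}})$ exchanging the two chain similarity subtypes while fixing the antichain subtype. Executing the fusion so that $S$ is invariant under this involution and $c$ is also invariant on $[S]^2$ under it---achievable by applying finite Ramsey at each stage to the composed coloring $c\circ(\bar{\cdot})$ in parallel with $c$---forces the two chain subtypes to share a color on $S$, producing the required bound of two colors.
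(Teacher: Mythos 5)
Your scaffolding is reasonable, and the paper cites this theorem to the literature rather than proving it, so I am judging the proposal on its own terms. The gap is exactly where you anticipate. The bit-flip reduction from $\le 3$ to $\le 2$ colours does not work: $c$ is a \emph{given} colouring, and nothing in the fusion can make it flip-invariant. Running the fusion in parallel for $c$ and $c\circ(\bar{\cdot})$ only ensures that each is type-constant on a flip-invariant $S$; it gives no equality between the constant $c$ takes on $0$-chains and the constant $c$ takes on $1$-chains. Concretely, let $c(\{s,t\})=0$ if $s\subsetneq t$ and $t(|s|)=0$, let $c(\{s,t\})=1$ if $s\subsetneq t$ and $t(|s|)=1$, and let $c(\{s,t\})=2$ if $s,t$ are $\s$-incomparable. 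Any $S$ that realizes both chain subtypes and at least one incomparable pair realizes three colours, flip-invariant or not, so the bound of two can never come from merging the two chain colours. (Moreover, once elements and meets of $S$ are pushed to distinct levels---which any such fusion does---the incomparable pairs split into \emph{two} similarity types depending on whether the $\s$-shorter node is $<_{\mathbb{Q}}$-below or $<_{\mathbb{Q}}$-above the longer; a dense set must realize both, so your baseline count of three is itself too low.)

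The correct move is to avoid $\s$-chains in $S$ altogether. Build $S$ as a strongly diagonal \emph{antichain}: pairwise $\s$-incomparable, with the elements and their pairwise meets at pairwise distinct levels, and still $<_{\mathbb{Q}}$-dense without endpoints; such antichains exist, and arranging this is where care is needed in the base of the fusion. Then every pair from $S$ has one of exactly two similarity types (the $\s$-shorter node lies $<_{\mathbb{Q}}$-below, or $<_{\mathbb{Q}}$-above), and a Halpern--L\"auchli/Milliken-style fusion applied to antichains rather than to subtrees yields a copy of such an $S$ on which $c$ is constant on each type, giving $|c``[S]^2|\le 2$ directly with no reduction step. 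This is the route taken in the sources the paper cites for Galvin's theorem, and it matches the fact that the big Ramsey degree of pairs of rationals is exactly $2$.
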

It follows that in Theorem~A and Corollary~B if we only desire an order-isomorphic copy of $\mathbb Q$, then one can simply colour any countably infinite subset of $\mathbb R$ which is dense as a linearly-ordered set. In order to see that the conclusion of Theorem~A and Corollary~B is stronger, recall the aforementioned fact that any set which is homeomorphic to $\mathbb Q$ contains a set order-isomorphic to $\mathbb Q$. 

Van Douwen asked the natural question whether Galvin's Theorem can be strengthened to obtain in fact a homeomorphic copy of $\mathbb Q$ on which at most $2$ colours are realised. This was answered negatively by Baumgartner in 1982.\footnote{For a positive result when the colourings are required to be continuous, see \cite[Theorem~4.1]{MR1297180}.}
\begin{etheorem}[Baumgartner, \cite{MR867644}] Let $X$ be a countable Hausdorff topological space. Then there is $c: [X]^2 \rightarrow \mathbb N$ such that for every $Y \s X$ homeomorphic to $\mathbb Q$, $c``[Y]^2 = \mathbb N$. 
\end{etheorem}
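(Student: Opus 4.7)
I'd first reduce to the case where $X$ actually contains a subspace homeomorphic to $\mathbb Q$, since otherwise any coloring works. Enumerate $X=\{x_n:n<\omega\}$, and using countability plus Hausdorffness, for each $n$ build a strictly decreasing sequence of open neighborhoods $V_n^0\supseteq V_n^1\supseteq V_n^2\supseteq\cdots$ of $x_n$ with $\bigcap_k V_n^k=\{x_n\}$ (inductively, after enumerating $X\setminus\{x_n\}$, pick $V_n^{k+1}\subseteq V_n^k$ open around $x_n$ omitting the $k$-th such point by Hausdorff separation). This associates to each ordered pair $(x_i,x_j)$ a \emph{level} $\ell(i,j)\in\omega$, namely the unique $k$ with $x_j\in V_i^k\setminus V_i^{k+1}$.

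The basic combinatorial fact available is that for any $Y\cong\mathbb Q$, the set $S(y):=\{k:V_y^k\cap Y\supsetneq V_y^{k+1}\cap Y\}$ is infinite for every $y\in Y$: if not, some $V_y^K\cap Y=\{y\}$, contradicting that $y$ is not isolated in $Y$. At each $k\in S(y)$ there is $y'\in Y\cap(V_y^k\setminus V_y^{k+1})$, realizing level $k$ at centre $y$. I would propose a coloring of the form $c(\{x_i,x_j\}):=F(\ell(i,j),\ell(j,i))$ for a pairing $F:\omega^2\to\omega$ chosen carefully. Using both levels (rather than a single one, say $F(\min(i,j),\ell)$) is essential: a counting argument shows any coloring of the latter form must fail, since $F$ would need fibres that are cofinite in the second coordinate for every first coordinate, which is impossible for a single-valued function.

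The main obstacle, and the technical heart of the proof, is choosing $F$ together with refining the filtration $(V_n^k)$ so that for every $Y\cong\mathbb Q$ and every target $m\in\omega$, some pair $\{x_i,x_j\}\subseteq Y$ has $(\ell(i,j),\ell(j,i))\in F^{-1}(m)$. The additional structural properties of a $\mathbb Q$-copy — regularity and zero-dimensionality, which distinguish copies of $\mathbb Q$ from arbitrary countable dense-in-itself Hausdorff subspaces — should come in at this point: they should allow one to argue that for every $y\in Y$ and every $k\in S(y)$, a witness $y'\in Y\cap(V_y^k\setminus V_y^{k+1})$ can be chosen so that the \emph{reverse} level $\ell(\mathrm{ind}(y'),\mathrm{ind}(y))$ is also under some control (e.g., realizing any prescribed value subject to mild constraints imposed by the local topology around $y'$). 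Making this control rigorous, and then designing $F$ so that every $m$ is forced on $[Y]^2$ across all $Y\cong\mathbb Q$, is where I expect the bulk of the combinatorial and topological work to lie.
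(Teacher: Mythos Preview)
The paper does not prove this statement; it is quoted in the introduction as background and attributed to Baumgartner's paper \cite{MR867644}, with no argument given. So there is no proof here to compare your proposal against.

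On the proposal itself: by your own description it is a plan with an explicitly open gap, and that gap is the entire content of the theorem. You correctly note that for each $y\in Y$ the set $S(y)$ of levels realised at centre $y$ is infinite, so a colouring depending on a single level $\ell(i,j)$ already attains infinitely many values on $[Y]^2$; but ``infinitely many'' is far from ``all of $\mathbb N$'', and it is precisely this gap that your two-sided level function $F(\ell(i,j),\ell(j,i))$ is supposed to close. The difficulty is that the filtrations $(V_n^k)_k$ at distinct indices $n$ are constructed independently, so the constraint $x_j\in V_i^k\setminus V_i^{k+1}$ imposes nothing whatsoever on where $x_i$ sits in the filtration around $x_j$. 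Your hope that regularity or zero-dimensionality of $Y$ supplies the missing correlation is not supported by any mechanism: those properties let you find small clopen neighbourhoods \emph{within} $Y$, but they say nothing about the externally fixed filtration $(V_{\mathrm{ind}(y')}^k)_k$ at a candidate $y'$. Moreover, for a given $y$ and $k\in S(y)$ you are only guaranteed \emph{one} witness $y'\in Y\cap(V_y^k\setminus V_y^{k+1})$, so there may be no freedom at all in the reverse level. Without a concrete device linking the two filtrations, or an altogether different organising structure, the sketch does not close; Baumgartner's own argument does not proceed via this kind of two-sided level control.
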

It follows that in Theorem~A the set of reals $X$ necessarily must be uncountable. 

Mentioned explicitly in Baumgartner's paper \cite{MR867644} from 1982 as ``the most interesting open problem in this area", and credited to the same letter, \cite{galvin}, from Galvin to Laver in 1970 by Raghavan and Todor{\v c}evi{\' c} in \cite{MR4190059} is the conjecture of Galvin which Corollary~B verifies. 
\begin{econjecture}[Galvin] For every positive natural number $K$ and $c:[\mathbb R]^2 \rightarrow K$ a colouring of pairs of reals, there is a set of reals $Y$ homeomorphic to $\mathbb Q$ such that $|c``[Y]^2| \leq 2$.
\end{econjecture}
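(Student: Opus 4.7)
My plan is to follow the high-level strategy employed by Raghavan and Todor{\v c}evi{\'c}, while replacing their large-cardinal input by a purely combinatorial $\zfc$ substitute coming from Shelah's PCF theory. First I would make topological reductions: by Sierpi{\'n}ski's characterisation, a non-empty subspace of $\mathbb R$ is homeomorphic to $\mathbb Q$ iff it is countable and dense-in-itself (metrisability being automatic for subsets of $\mathbb R$). Discarding at most countably many isolated points preserves uncountability, so I may assume $X$ is itself dense-in-itself, and after further thinning, of cardinality $\aleph_1$. The task reduces to finding a countable dense-in-itself $Y\subseteq X$ with $|c``[Y]^2|\leq 2$.

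The central step is to produce an uncountable $Z\subseteq X$ and two colours $i_0,i_1<K$ with the following local accumulation property: for every $x\in Z$ and every open set $U\ni x$, the set $\{y\in Z\cap U:c(\{x,y\})\in\{i_0,i_1\}\}$ accumulates to every point of $X\cap U$. In the Raghavan--Todor{\v c}evi{\'c} argument such closure is extracted from a measurable cardinal via elementary substructures closed under countable sequences; to obtain it in $\zfc$, I would code each $x\in X$ by a sequence of ordinals drawn from a Shelah-style PCF scale, or by a walks-on-ordinals function attached to a club-guessing sequence at $\omega_1$. A Fodor-style pressing-down on these codes, together with a finite pigeonhole on $K$, should then stabilise the colouring along a continuous $\subseteq$-increasing chain of ``$\omega$-approximating'' substructures so that only two colours appear across consecutive levels.

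Given $Z$ and $\{i_0,i_1\}$ with the accumulation property, the extraction of $Y$ is routine: enumerate a countable base $\{U_n:n<\omega\}$ of non-empty open sets meeting $Z$, and build $Y=\{y_n:n<\omega\}$ by recursion, choosing $y_n\in U_n\cap Z$ to be an accumulation point of $X$ with $c(\{y_n,y_k\})\in\{i_0,i_1\}$ for all $k<n$. The accumulation clause guarantees the recursive step always succeeds, and $Y$ is automatically dense-in-itself. The principal obstacle is the middle step: synthesising, in $\zfc$, a replacement for the countable closure afforded by a measurable cardinal. I expect the crux of the paper to be a delicate argument showing that a PCF scale --- perhaps combined with a walks-on-ordinals analysis and a covering-number assertion at an appropriate singular cardinal --- provides sufficiently many ``approximately elementary'' substructures to force the colouring into two stable values on an uncountable set.
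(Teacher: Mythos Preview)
Your high-level outline --- keep the Raghavan--Todor{\v c}evi{\'c} construction but replace its large-cardinal input by something from Shelah --- is indeed the shape of the paper's argument. However, you have misidentified which part of Shelah's toolbox does the work, and the ``central step'' you describe is not what actually succeeds.

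First, the paper uses neither PCF scales, nor walks on ordinals, nor club-guessing. The relevant Shelah idea is the Galvin--Hajnal rank $\rk_{\mathcal I}(f)$ of an ordinal-valued function $f$ modulo an $\aleph_1$-complete ideal $\mathcal I$, together with Shelah's operator $\mathbf I[f,\mathcal I]:=\mathcal I\cup\{A\in\mathcal I^+:\rk_{\mathcal I+A}(f)>\rk_{\mathcal I}(f)\}$, which is again a fine normal ideal. No coding of points of $X$ by scale sequences, and no pressing-down on such codes, takes place.

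Second, your ``central step'' --- an uncountable $Z\subseteq X$ and two colours such that $\{y\in Z\cap U:c(\{x,y\})\in\{i_0,i_1\}\}$ accumulates to every point of $X\cap U$ for all $x\in Z$ and open $U\ni x$ --- is not what the paper proves, and it is doubtful one can prove it in $\zfc$. Note that the closely related statement ``there is an uncountable $Z$ on which $c$ takes at most two colours'' is precisely the Erd{\H o}s--Hajnal question, which is independent of $\zfc$; a version demanding dense accumulation everywhere is at least as suspicious. The paper never isolates an uncountable subset of $X$ with a pointwise accumulation property.

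What the paper actually does: it works throughout with pairs $(A,\mathcal I)$ where $\mathcal I$ is a fine normal ideal on $\pom(H(\Omega))$ and $A\subseteq\Lambda_1$ is $\mathcal I$-positive, and it allows the ideal to vary. An exhaustion argument on the finite poset $\mathcal P(K^2)$ first isolates a ``decisive'' pair $(A,\mathcal I)$ and a pair of colours $(i,j)$ such that, for every fine normal $\mathcal J\supseteq\mathcal I$ and every $\mathcal J$-positive $B\subseteq A$, one can find $\mathcal J$-positive $S,T\subseteq B$ with $(S,T)$ $(i,j)$-saturated over $\mathcal I$. The main lemma then shows $(A,\mathcal I)$ is an $(i,j)$-winning pair by contradiction: assuming no winner below some $B$, one builds a tree $\mathcal T$ of finite approximations and, for each $M\in B$, a well-founded subtree $\mathcal T^M$ with rank function $\rk^M$. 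The key move is Shelah's ``hybrid rank'': take a creature $\mathfrak p=(\sigma,Z,f)$ with $f(M)=\rk^M(\sigma)$ of minimal $\rk_{\mathcal I+Z}(f)$, pass to the ideal $\mathcal J=\mathbf I[f,\mathcal I+Z]$, use decisiveness of $(A,\mathcal I)$ relative to $\mathcal J$ to extend $\sigma$, and observe that the extension has strictly smaller hybrid rank, a contradiction.

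So the genuine gap in your proposal is the middle step: the PCF input is a rank-and-ideal argument (replacing precipitousness-type games), not a scale or a walks-based coding, and the target object is a winning pair of (ideal, positive set), not an uncountable subset of $X$ with a local accumulation property.
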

This appears also in Weiss' survey on partition relations for topological spaces, \cite[Question~4.2]{MR1083599}. We will refer to the conjectured statement, that is, the statement of Corollary~B, as \emph{Galvin's Conjecture} in the rest of this introduction.

Related to this is the following question which Galvin also considered, but which appears already in the Erd{\H o}s-Hajnal problem list from 1967 as \cite[Problem~15]{MR280381}.
\begin{equestion}[Erd{\H o}s and Hajnal, \cite{MR280381}] Without assuming the continuum hypothesis, $2^{\aleph_0}= \aleph_1$, can one prove that there is a colouring $c:[\mathbb R]^2 \rightarrow 3$ of pairs of reals into $3$ colours, such that for every uncountable set of reals $Y$, $|c``[Y]^2| = 3$?
\end{equestion}
The assumption of the negation of the continuum hypothesis is justified by the Erd{\H o}s-Hajnal-Rado result (\cite[Theorem~17]{MR202613}) that assuming $|\mathbb R|= \aleph_1$, there is a colouring $c:[\aleph_1]^2\rightarrow \aleph_1$ such that on any uncountable $Y \s \aleph_1$, $c``[Y]^2 = \aleph_1$. Later in 1984, \cite{MR908147}, Todor{\v c}evi{\' c} using his theory of `walks on ordinals' (see the monograph \cite{MR2355670}) proved the spectacular result that the same is true just in $\zfc$, the most radical failure possible for the Ramsey Theorem at $\aleph_1$.\footnote{For various extensions of Todor{\v c}evi{\'c}'s result at $\aleph_1$ and at other cardinals, see the survey \cite{MR3271280}.}

These results show that the strengthening of Corollary~B where the set $Y$ (according to the notation there) is asked to be uncountable does not hold in any model satisfying the continuum hypothesis.

Returning to the Erd{\H o}s-Hajnal question, Shelah in 1985 proved that the answer to it is negative, by giving a forcing construction assuming the existence of suitable large cardinals. 
\begin{etheorem}[Shelah, \cite{Sh:276}] Assuming the existence of an $\omega_1$-Erd{\H o}s cardinal, the following is consistent: let $K$ be a positive natural number and let $c: [\mathbb R]^2 \rightarrow K$ be a colouring of pairs of reals into $K$-many colours; then for some uncountable $Y \s \mathbb R$, $|c``[Y]^2| \leq 2$.\footnote{Shelah later improved the large cardinal assumption in \cite[Theorem~1.21]{Sh:546} to a strongly Mahlo cardinal. See also \cite{Sh:288} for a similar result for all finite dimensions.}
\end{etheorem}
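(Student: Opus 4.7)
The natural approach is to start with a ground model $V$ containing an $\omega_1$-Erd\H{o}s cardinal $\kappa$ and force with the L\'evy collapse $\mathbb P=\mathrm{Coll}(\omega,<\kappa)$, so that in $V[G]$ one has $\kappa=\aleph_1$ and $|\mathbb R^{V[G]}|=\aleph_1$, with every real coded by a nice $\mathbb P$-name of bounded support below some $\gamma<\kappa$. The hope is to use the Erd\H{o}s partition property in the ground model to furnish, for any $\mathbb P$-name $\dot c$ for a colouring $[\mathbb R]^2\to K$, an uncountable set of reals in $V[G]$ on which $c$ takes at most two values.

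Working in $V$, I would fix a long sequence $\langle\dot r_\alpha : \alpha<\kappa\rangle$ of nice names for reals with strictly increasing supports $\gamma_\alpha<\gamma_{\alpha+1}$, forced by a common condition to be pairwise distinct. Using the weak homogeneity and permutation symmetry of $\mathbb P$, I would arrange (by passing to a dense set of conditions) that for every pair $\alpha<\beta$ there is a canonical condition $p_{\alpha,\beta}$ with support contained in $\gamma_\beta+1$ deciding both the value of $\dot c(\dot r_\alpha,\dot r_\beta)$ and whether $\dot r_\alpha<\dot r_\beta$, in such a way that the decided data depends only on the abstract combinatorial type of the pair. Then I would define a colouring $f:[\kappa]^2\to L$ (with $L$ finite) recording that type and apply the $\omega_1$-Erd\H{o}s property to obtain $H\s\kappa$ of order-type $\omega_1$ on which $f$ is constant. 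Along $H$, the colour $c(r_\alpha,r_\beta)$ in $V[G]$ should depend only on which of $r_\alpha,r_\beta$ is the larger real, giving at most two colours on $\{r_\alpha:\alpha\in H\}$.

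The main obstacle I foresee is that a naively-defined ``colouring of pairs of names'' is not a function in $V$, because the forced value of $\dot c(\dot r_\alpha,\dot r_\beta)$ is in general not decided by any single condition until further information about $G$ is revealed. Overcoming this requires passing repeatedly to dense sets of conditions and exploiting the permutation symmetries of the L\'evy collapse so as to canonicalise the choice of deciding condition and reduce the data to something finitary in $V$. A secondary subtlety is that $H$ cannot yield a monotone $\omega_1$-chain of reals, since $\mathbb R$ contains no such chain; the order comparison $r_\alpha$ vs.\ $r_\beta$ must therefore genuinely take both values along $H$, which is precisely what forces the bound of two colours rather than one and matches the Sierpi\'nski lower bound.
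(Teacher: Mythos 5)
There is a fatal obstruction at the very first step: the L\'evy collapse $\mathrm{Coll}(\omega,<\kappa)$ forces $\ch$, whereas the conclusion you are trying to establish is \emph{inconsistent} with $\ch$. Since $\kappa$ is inaccessible (an $\omega_1$-Erd\H{o}s cardinal is much more than that), $\mathrm{Coll}(\omega,<\kappa)$ is $\kappa$-c.c.\ and has size $\kappa$, so every real in $V[G]$ has a nice name of size $<\kappa$ and hence $|\mathbb R^{V[G]}|\le\kappa^{<\kappa}=\kappa=\aleph_1^{V[G]}$. But as recalled in the introduction (the Erd\H{o}s--Hajnal--Rado theorem), under $|\mathbb R|=\aleph_1$ there is a colouring $c\colon[\mathbb R]^2\to\aleph_1$ whose range on every uncountable square is all of $\aleph_1$; composing with a surjection onto $3$ gives a $3$-colouring for which no uncountable $Y$ has $|c``[Y]^2|\le 2$. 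So the model you propose cannot satisfy the desired statement, whatever ground-model combinatorics you bring to bear. This also tells you exactly why the obstacle you flag as the ``main'' one cannot be overcome: the forced value of $\dot c(\dot r_\alpha,\dot r_\beta)$ really does depend on the part of $G$ beyond $\gamma_\beta$, and no amount of density arguments or collapse-automorphisms will canonicalise it down to a function computed in $V$ from bounded data --- if that were possible, you would have proved the impossible.

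A secondary problem, independent of the above, is the use of homogeneity. If the finite colouring $f$ on $[\kappa]^2$ records which of $r_\alpha,r_\beta$ is larger, then constancy of $f$ on $[H]^2$ with $\otp(H)=\omega_1$ would give a monotone $\omega_1$-sequence of reals, which does not exist; so $f$ cannot include the order relation, and you must instead record, for each of the two possible orderings, the colour that $\dot c$ is (somehow) forced to take --- which is what actually yields the bound of two colours. As for the correct shape of the construction: Shelah's model in \cite{Sh:276} necessarily satisfies $\neg\ch$; one wants a cardinal-preserving, reals-adding forcing that makes $\kappa$ into the continuum (so $\kappa\ge\aleph_2$ in the extension) while the $\omega_1$-Erd\H{o}s property of $\kappa$ in $V$ is used on a carefully chosen $\kappa$-sequence of names to produce a homogeneous-in-the-right-sense uncountable set of reals in the extension. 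Replacing your collapse by such a forcing, and then facing the genuine work of transferring homogeneity from names to the extension, is where the real proof lives.
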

The exact definition of an $\omega_1$-Erd{\H o}s cardinal, or more generally, any other large cardinal we mention will not be required in the paper so we refer the reader to \cite{MR1994835} for details.
 
To connect the Erd{\H o}s-Hajnal question to Galvin's Conjecture, note that in Shelah's model, for every colouring $c: [\mathbb R]^2 \rightarrow K$ of pairs of reals into finitely-many colours, for some uncountable $Y \s \mathbb R$, $|c``[Y]^2| \leq 2$. Then we can shrink $Y$ to a countably infinite set $Z$ which is dense-in-itself, and hence $Z$ is homeomorphic to $\mathbb Q$\footnote{Sierpi{\'n}ski \cite{Sierpinski1920} proved that any countable non-empty metric space which is dense-in-itself is homeomorphic to $\mathbb Q$}. In other words, Shelah's forcing construction establishes that, assuming large cardinals, Galvin's Conjecture is consistent.

More recently in 2018 Raghavan and Todor{\v c}evi{\' c} made a breakthrough by verifying the conjecture axiomatically: assuming large cardinals, or hypotheses which have large cardinal strength.
\begin{etheorem}[Raghavan and Todor{\v c}evi{\' c}, \cite{MR4190059}]
Suppose that there is a Woodin cardinal or a strongly compact cardinal or a precipitous ideal on $\aleph_1$. Then Galvin's Conjecture holds.
\end{etheorem}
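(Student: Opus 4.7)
The plan is to use the large cardinal hypothesis, in each case, to obtain a well-founded generic elementary embedding $j : V \to M$ with critical point $\aleph_1^V$ in some generic extension $V[G]$: a precipitous ideal on $\aleph_1$ gives this directly via its generic ultrapower; Woodin cardinals supply it through the stationary tower; and strongly compact cardinals through a standard projection argument after a suitable collapse. Once such an embedding is available, the argument should proceed uniformly. By Sierpi{\'n}ski's classical theorem that any countable non-empty metric space which is dense-in-itself is homeomorphic to $\mathbb Q$, it suffices, for any given $c : [\mathbb R]^2 \to K$ in $V$, to find a countable dense-in-itself $Y \s \mathbb R$ with $|c``[Y]^2| \leq 2$.

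The key leverage from the embedding is that $j$ fixes every real of $V$ pointwise (each real is a countable object and $\omega < \text{crit}(j)$), whence $\mathbb R^V \s \mathbb R^M$ and $j(c)$ agrees with $c$ on all pairs of ground-model reals. The generic embedding produces new ``ideal points'' $s \in M$ which can be identified, in $V[G]$, with reals approximating ground-model reals in a controlled way; for each such $s$, the function $f_s : \mathbb R^V \to K$ with $f_s(r) := (j(c))(\{r, s\})$ partitions $\mathbb R^V$ into $K$ classes, and a pigeonhole produces a color $i(s) < K$ together with a nonempty open interval in which $f_s^{-1}(i(s))$ is topologically dense.

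I would then build, inside $V[G]$, a countable sequence $\langle y_n, s_n, U_n : n < \omega\rangle$ with $y_n \in \mathbb R^V$, each $s_n$ a new element of $M$, and $U_n$ shrinking basic open sets, together with colors $i, i' \in K$, such that $Y := \{y_n : n < \omega\}$ is dense-in-itself in $\mathbb R$ and every pairwise color $c(\{y_n, y_m\})$ is pinned, by elementarity together with the approximation properties of the $s_k$, to be either $i$ or $i'$ depending on the order of the indices. Because $Y$ is a countable set of ground-model reals, the statement ``$Y$ is dense-in-itself in $\mathbb R$ and $c$ uses at most two colors on $[Y]^2$'' is $\Sigma^1_1$ over $(\omega, \mathbb R, c)$ and hence absolute between $V$ and $V[G]$, so the constructed $Y$ descends to a witness in $V$.

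The main obstacle will be the fusion in the third step: weaving together countably many pigeonhole choices with the density requirements for $Y$, while keeping everything coherent with the generic filter. A key subtlety is that the color $i(s)$ depends on $s$, so I would first pass to a dense subset of the forcing on which this choice stabilizes, and then perform the geometric construction of a perfect-like set of ground-model reals inside an interval where a single dominant color controls the relevant pairs. Organising this coherent fusion — and verifying that exactly two colors suffice for all pairs of elements of $Y$, regardless of how the $s_k$ interleave with the $y_n$ in the generic extension — is where I expect the bulk of the combinatorial and forcing-theoretic work to sit.
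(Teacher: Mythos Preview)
First, note that the paper does not itself prove this theorem: it is quoted in the introduction as a result of Raghavan and Todor\v{c}evi\'{c} from \cite{MR4190059}. The paper's own contribution is to eliminate the large-cardinal hypothesis. That said, the paper reproduces the Raghavan--Todor\v{c}evi\'{c} machinery in detail (Section~\ref{sectionRT} and the ``Discussion of the proof'' subsection), so there is something concrete to compare your sketch against.

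Your outline differs substantially from the approach the paper attributes to Raghavan--Todor\v{c}evi\'{c}. They do not pass to a generic extension, build $Y$ there, and descend by absoluteness. They work entirely in $V$: fix a stationary collection of countable $M \prec H(\Omega)$ each carrying a point $F(M) \in \overline{M \cap X} \setminus M$; define combinatorial notions of \emph{$(i,j)$-saturation} and \emph{$(i,j)$-winner} for ideal-positive sets of such $M$ (Definitions~\ref{defnweaksaturation} and~\ref{winnerdefinition}); use precipitousness-type games (in the present paper, the rank argument of Lemma~\ref{Shelahidea}) to produce a winning pair; and then build $Y$ in $V$ by the recursive tree construction of Theorem~\ref{RaghavanTodorcevicconstruction}, taking each $y_n$ to be $F(M)$ for a suitably chosen $M$. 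The role your generic point $s$ is meant to play is played instead by $F(M)$, and ``positive for the ideal'' replaces ``forced by a condition''.

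Your proposal has a genuine gap at the absoluteness step. The assertion ``there is a countable dense-in-itself $Y \subseteq \mathbb R$ with $|c``[Y]^2| \leq 2$'' is not $\Sigma^1_1$ in any sense to which Shoenfield applies: the colouring $c$ is an arbitrary function on $[\mathbb R]^2$, a third-order object, not a real parameter. Even though each $y_n$ you produce lies in $\mathbb R^V$, the \emph{sequence} $\langle y_n \mid n < \omega\rangle$ is assembled using the generic filter, and the forcings at hand (the stationary tower, or the quotient by a precipitous ideal on $\omega_1$) collapse $\omega_1$ and certainly add new countable sequences of ground-model reals. To get a witness in $V$ you would have to rearrange the construction so that each finite stage is decided by a single condition and the commitments can be read off in $V$ from the forcing relation --- and doing this carefully is exactly what the ideal-positivity framework in \cite{MR4190059} (and in Theorem~\ref{RaghavanTodorcevicconstruction} here) accomplishes. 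Your ``fusion'' paragraph gestures at this difficulty but does not address it; without it, the argument does not close.
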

The advantage of the Raghavan-Todor{\v c}evi{\' c} result over the result of Shelah is that by the L{\'e}vy-Solovay phenomenon (see \cite[pg.~125]{MR1994835}) it follows that assuming there is a proper class of Woodin or strongly compact cardinals, not only does Galvin's Conjecture hold, but furthermore in any set forcing extension of the universe, Galvin's Conjecture continues to hold. In particular, a set forcing extension of an arbitrary model of set theory cannot produce a model where Galvin's Conjecture fails, which is strong evidence that Galvin's conjecture holds outright.\footnote{In contrast, for the Erd{\H o}s-Hajnal question, Baumgartner observed (see \cite[pg.~274]{MR357122}) that it is consistent that $|\mathbb R|$ is anything reasonable and there is $c:[\mathbb R]^2 \rightarrow \mathbb N$ such that on any uncountable $Y \s \mathbb R$, $c``[Y]^2 = \mathbb N$.}

Further evidence is given by Eisworth \cite{eisworth2023galvins} who improved the hypotheses to a Ramsey cardinal or a weakly precipitous ideal on $\aleph_1$ and in particular showed that Galvin's Conjecture may hold in a generic extension of G{\"o}del's constructible universe as well.

This brings us to our main result, Corollary~B, that Galvin's Conjecture is true in $\zfc$. This also answers the $2$-dimensional case of \cite[Open~Problem~8.2]{MR4253680} as well as \cite[Questions~1,2,3]{eisworth2023galvins}. Our inspiration is a technique of Shelah from \cite{Sh:881} which uses an idea from his work on PCF theory (PCF stands for `possible cofinalities', see the monograph \cite{Sh:g}). After we discuss the structure of the paper we will discuss our proof and compare it with those in \cite{MR4190059} and \cite{eisworth2023galvins}. 

As for the matter of whether Theorem~A and Corollary~B can be strengthened, it is easy to construct using the standard metric on $\mathbb R$ a colouring $c:[\mathbb R]^2 \rightarrow \mathbb N$ which takes infinitely-many values on $[Y]^2$ when $Y$ is not discrete.\footnote{There are some details in Lemma~\ref{lemma13}. This cannot be strengthened to realising all the colours on such a $Y$ by Corollary~B; on the other hand Galvin and Shelah \cite{Sh:23} constructed a $c:[\mathbb R]^2\rightarrow \mathbb N$ such that for any $Y \s \mathbb R$ with $|Y| = |\mathbb R|$, $c``[Y]^2 = \mathbb N$.} 

More interesting is the case of considering higher-dimensional colourings such as colouring triples of reals. 
Here Raghavan and Todor{\v c}evi{\' c} have proved the following, generalising Baumgartner's Theorem above which would correspond to $n=0$ below.
\begin{etheorem}[Raghavan and Todor{\v c}evi{\' c}, \cite{MR4579382}] For every positive natural number $n$, and a set of reals $X$ of size $\aleph_n$, there is $c:[X]^{n+2} \rightarrow \mathbb N$ such that for any $Y \s X$ which is homeomorphic to $\mathbb Q$, $c``[Y]^{n+2} = \mathbb N$.
\end{etheorem}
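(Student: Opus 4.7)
The natural strategy is induction on $n$, with the $n=0$ base case supplied by Baumgartner's theorem (quoted above) applied to the countable set of reals $X$ viewed as a Hausdorff topological space.

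For the inductive step, assume the conclusion at level $n$ and let $X \s \mathbb R$ have size $\aleph_{n+1}$. Fix a continuous $\in$-chain of elementary submodels $\langle M_\alpha : \alpha < \omega_{n+1}\rangle$ of $(H(\theta),\in)$ for sufficiently large $\theta$, with $X,\mathbb R \in M_0$, $|M_\alpha| = \aleph_n$, $M_\alpha \cap \omega_{n+1} \in \omega_{n+1}$, and (if $n \geq 1$) each $M_\alpha$ closed under $\aleph_n$-sequences. For $x \in X$ set $\rk(x):=\min\{\alpha : x \in M_\alpha\}$. The set $X_\alpha := X \cap M_\alpha$ is a set of reals of size at most $\aleph_n$, so by the inductive hypothesis it carries a colouring $d_\alpha : [X_\alpha]^{n+2} \to \mathbb N$ on which every homeomorphic copy of $\mathbb Q$ realises every colour. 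Define $c : [X]^{n+3} \to \mathbb N$ by
$$c(x_0,\ldots,x_{n+2}) := d_{\rk(x_{n+2})+1}(x_0,\ldots,x_{n+1}),$$
where $x_0,\ldots,x_{n+2}$ are enumerated so that $\rk(x_{n+2})$ is strictly maximal (with an arbitrary fallback value when no such enumeration exists).

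To verify, let $Y \s X$ be homeomorphic to $\mathbb Q$ and fix $k \in \mathbb N$. The strategy is to find $y^\star \in Y$ with $\rk(y^\star) > \rk(y)$ for every other $y \in Y$; granting this, $Y \setminus \{y^\star\}$ is contained in $X_{\rk(y^\star)+1}$ and is itself homeomorphic to $\mathbb Q$ (since removing a point from a copy of $\mathbb Q$ yields a copy of $\mathbb Q$ by Sierpi\'nski's characterisation), so the inductive hypothesis applied to $d_{\rk(y^\star)+1}$ delivers $y_0,\ldots,y_{n+1}$ in $Y\setminus\{y^\star\}$ of colour $k$, whence $c(y_0,\ldots,y_{n+1},y^\star) = k$.

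The main obstacle is the existence of such a $y^\star$: when the set $\{\rk(y) : y \in Y\}$ has no maximum, no candidate works in the na\"ive sense. This is exactly the situation Todor\v{c}evi\'c's minimal walks technology is designed for. I would replace the rule of ``take the $\rk$-largest element of the tuple'' by a walk from the supremum of the ranks along a fixed $C$-sequence on $\omega_{n+1}$: the walk produces a whole sequence of candidate levels below the supremum, and by the combinatorics of the $\rho$-functions at least one candidate must meet every fixed copy of $\mathbb Q$ in the right way. Filling in these walks-on-ordinals details is the technical heart of the argument, and is where I would expect the bulk of \cite{MR4579382} to reside.
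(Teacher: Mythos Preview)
The paper does not prove this theorem. It is quoted, with attribution to Raghavan and Todor\v{c}evi\'c \cite{MR4579382}, solely to explain why the two-dimensional result of Corollary~B cannot be pushed to triples without extra hypotheses on $|\mathbb R|$. There is therefore no proof in the present paper to compare your proposal against; the argument lives entirely in \cite{MR4579382}.

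On the proposal itself: your stepping-up outline (Baumgartner as base case, elementary submodels to assign ranks, feed the lower-rank part of a tuple into the inductively given colouring) is the natural first attempt, and you have correctly isolated the genuine obstruction --- a countable $Y\cong\mathbb Q$ need not contain a point of strictly maximal rank, so the ``peel off the top element'' step fails as stated. Your closing paragraph is honest about this and defers the repair to \cite{MR4579382}; that is appropriate, but it means what you have written is a proof \emph{plan}, not a proof. Two smaller points: the requirement that each $M_\alpha$ be closed under $\aleph_n$-sequences is impossible for $|M_\alpha|=\aleph_n$ (you presumably mean $<\!\aleph_n$-sequences, or just $\omega$-sequences); and the inductive hypothesis as stated is for sets of size exactly $\aleph_n$, whereas $X\cap M_\alpha$ has size at most $\aleph_n$, so you should either strengthen the induction to ``$\le\aleph_n$'' or note that the Baumgartner base case already handles all countable sets.
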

In the simplest case, it follows that we cannot improve Corollary~B to colouring triples of reals unless we also assume that $|\mathbb R| \geq \aleph_2$, that is, unless we assume the failure of the continuum hypothesis.

Before moving on to a discussion of the more general Theorem~D, we mention a corollary which improves on \cite[Corollary~8]{MR4190059} since our result is in $\zfc$. A motivation for this result comes from the discussion of the \emph{expansion problem} for a topological space (see \cite[\S~2]{MR4190059} for details) and the following results connects to Ramsey's original application of the Ramsey Theorem  in \cite{MR1576401} and a related result of Erd{\H o}s-Rado \cite{MR65615} (see the discussion of \cite[Corollary~8]{MR4190059}).
\begin{corc} Let $<$ denote the usual order on $\mathbb R$ and let $<^*$ be a well-ordering of $\mathbb R$. Let $X \s \mathbb R$ be an uncountable set, and let $E\s X^2$ be a binary relation (not necessarily symmetric). Then there is $Y \s X$ which is homeomorphic to $\mathbb Q$ such that $E \cap Y^2$ is one of the following restricted to $Y$: $\top, \perp, =, \neq, <, >, \leq, \geq, <^*, >^*, \leq^*, \geq^*, < \cap <^*, < \cap >^*, > \cap <^*, > \cap >^*, \leq \cap <^*, \leq \cap >^*, \geq \cap <^*, \geq \cap >^*$. 
\end{corc}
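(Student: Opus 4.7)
My plan is to deduce Corollary~C from Theorem~A via a single coloring of pairs of $X$. I first normalize the diagonal: since $X$ is uncountable, one of the two sets $\{x\in X : (x,x)\in E\}$ or its complement in $X$ is uncountable, so replacing $X$ by it I may assume the diagonal status of $E$ is uniform on $X$. I then color $[X]^2$ by
$$c(\{x,y\}) \;=\; \bigl(\mathbf 1_{(x,y)\in E},\; \mathbf 1_{(y,x)\in E},\; \mathbf 1_{x<^* y}\bigr)\in\{0,1\}^3$$
for $x<y$; the first two bits encode the ``$E$-pattern'' on the pair (one of $N,F,B,BF$, meaning neither ordered pair in $E$, only $(x,y)$, only $(y,x)$, or both), and the third bit records the $<^*$-direction. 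Applying Theorem~A, there is $Y\subseteq X$ homeomorphic to $\mathbb Q$ with $|c``[Y]^2|\leq 2$.

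The key combinatorial fact about $Y$ is that, being a countable dense-in-itself subset of $\mathbb R$, the order $<|_Y$ is a dense linear order while $<^*|_Y$ is a well-order of the same countable set; the two cannot agree on all pairs of $Y$, nor can one be the reverse of the other. Consequently the third coordinate of $c$ takes both values on pairs of $Y$, and the at-most-two colors from Theorem~A are distinguished precisely by this coordinate: there is a single $E$-pattern $p\in\{N,F,B,BF\}$ on every ALIGNED pair of $Y$ (those with $x<^* y$ when $x<y$) and a single pattern $q$ on every ANTIALIGNED pair. The restriction $E\cap Y^2$ is then completely determined by $(p,q)$ together with the diagonal status.

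A routine case analysis matches each configuration with one of the twenty listed relations restricted to $Y$: e.g.\ $(F,F)$ with the two diagonal options yields $<$ or $\leq$; $(F,B)$ yields $<^*$ or $\leq^*$; $(F,N)$ yields $<\cap<^*$ and its reflexive variant; $(BF,BF)$ yields $\neq$ or $\top$; $(N,N)$ yields $\perp$ or $=$; and so on. The step I expect to be most delicate is disposing of the combinations that mix the ``both directions'' pattern $BF$ with one of $N,F,B$, since these would a~priori give relations such as $(<\cap<^*)\cup(>\cap>^*)$ that are not among the twenty; I anticipate handling these either by strengthening the coloring of Step~2 with an auxiliary bit tracking membership in the symmetric part $E\cap E^{-1}$, or by a further Ramsey-theoretic refinement inside an order-isomorphic copy of $\mathbb Q$ contained in $Y$ (using Galvin's theorem on pairs of $\mathbb Q$ against the auxiliary well-order $<^*|_Y$), in both cases invoking the same DLO/well-order incompatibility to eliminate the unwanted configurations.
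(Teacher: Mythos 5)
Your overall strategy — color $[X]^2$ with the $E$-pattern of the pair together with the Sierpi\'nski bit $\mathbf 1_{x<^*y}$, apply Theorem~A, and use the fact that a dense-in-itself $Y$ cannot have $<\restriction Y$ agree with (or oppose) the well-order $<^*\restriction Y$ — is exactly the adaptation of Todor\v cevi\'c's Theorem~1.7 that the paper's one-line proof alludes to. Your step~1 (stabilising the diagonal) and the deduction that the two colours produced by Theorem~A must differ in the $<^*$-bit, so that a single pattern $p$ governs aligned pairs and a single pattern $q$ governs anti-aligned pairs, are all correct.

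The place you flag as ``most delicate'' is, however, a genuine gap, and neither of the two repairs you sketch closes it. The auxiliary fourth bit $\mathbf 1_{(x,y)\in E\cap E^{-1}}$ is a function of the first two bits, so adjoining it to the colouring does not change which pairs of colours can appear and rules out nothing. The Galvin-on-$\mathbb Q$ refinement also cannot help: any dense-in-itself $Z\subseteq Y$ still realises both alignments, and the $E$-patterns $p$ and $q$ on aligned and anti-aligned pairs of $Z$ are inherited unchanged from $Y$ (they are already determined by $c``[Y]^2$ having size $\le 2$); Galvin's theorem cannot push the number of colours below $2$, so nothing is gained. More concretely, the pair $(p,q)=(BF,F)$ corresponds to $E\cap Y^2=(\leq\cup>^*)\cap Y^2=\bigl((>\cap<^*)^c\bigr)\cap Y^2$, and this is \emph{not} one of the twenty listed relations restricted to $Y$, as you can check case by case (it strictly contains each of $\leq$, $>^*$, $\geq^*$, etc., and cannot equal $\top$, which would force $<$ and $>^*$ to agree on $Y$). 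The same goes for the other five mixed-$BF$ cases $(BF,N)$, $(N,BF)$, $(F,BF)$, $(BF,B)$, $(B,BF)$, which produce unions such as $<\cup<^*$ and the ``agreement'' relation $(<\cap<^*)\cup(>\cap>^*)$. None of these can be eliminated by shrinking $Y$, since the $(p,q)$ pair is invariant under passing to dense-in-itself subsets of $Y$. So as written, your argument yields that $E\cap Y^2$ is one of the relations arising from the sixteen $(p,q)$ pairs together with the two diagonal options, and you would need either a different argument or an enlarged target list (closing the given one under complementation, i.e.\ including the unions $\leq\cup<^*$, $\leq\cup>^*$, $\geq\cup<^*$, $\geq\cup>^*$ and the ``agreement/disagreement'' relations) to match the stated twenty. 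I recommend you revisit the exact statement of Todor\v cevi\'c's Theorem~1.7 and of Raghavan--Todor\v cevi\'c's Corollary~8, which the paper cites, to pin down the intended list before attempting to dispose of the mixed-$BF$ cases.
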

\begin{proof} The proof is similar to that of \cite[Theorem~1.7]{MR2603812} using Theorem~A instead of Ramsey's Theorem.
\end{proof}
Closely related to the expansion problem and more generally to the topic of this paper is the area of \emph{Ramsey degrees} which has recently seen a lot of progress, and we refer the interested reader to Dobrinen's ICM survey \cite{MR4680287} for this, connections with topological dynamics etc. Our Corollary~B or the forthcoming Theorem~D can both be refomulated using this language. Another survey where many related results are collected is \cite{MR4253680}. 

We end by mentioning that much of our knowledge of the history of this subject comes from Todor{\v c}evi{\'c's survey \cite{todorcevicbasis}.
\subsection{A Ramsey theorem for a class of topological spaces}\label{subsectiongeneral}
All topological spaces are assumed to be Hausdorff.
As we have mentioned earlier, Theorem~A is a consequence of a more general result that we prove. In this we are emulating Raghavan and Todor{\v c}evi{\' c}  \cite{MR4190059} who also proved Theorem~D below assuming additionally a class of Woodin cardinals or a strongly compact cardinal\footnote{This is the main theorem of their paper.}, and then use it to draw a corollary about Galvin's Conjecture, assuming the relevant large cardinal hypotheses.
Before we state Theorem~D, let us recall the definitions. The first is standard, the second 
is a property intermediate between being first countable and being second countable, and the last is due to Hajnal and Juh{\'a}sz, \cite{MR264585}.
\begin{definition} \label{defnclassofspaces}Let $(X, \tau)$ be a topological space. 
\begin{enumerate}
\item We say that $(X, \tau)$ is \emph{regular} if for every closed $C \s X$ and $x \in X \setminus C$, we can find pairwise disjoint open sets $U, V$ with $C \s U$ and $x \in V$. 
\item A base $\mathcal B \s \tau$ is said to be \emph{point-countable} if for every $x \in X$, $\{U \in \mathcal B \mid x\in U\}$ is countable.
\item We say that $(X, \tau)$ is \emph{left-separated} if there is $<^*$ a well-ordering of $X$ such that for every $x \in X$, $\{y\in X \mid y<^* x\}$ is a closed set.
\end{enumerate}
\end{definition}
 The property of left-separation is a notion of smallness among topological spaces, for example a countable Hausdorff space is always left-separated.
\begin{thmd} Let $(X, \tau)$ be a regular topological space which is not left-separated and has a point-countable basis. Let $K$ be a positive natural number and let $c:[X]^2 \rightarrow K$ a colouring of pairs of elements of $X$. Then there is a $Y\s X$ homeomorphic to $\mathbb Q$ such that $|c``[Y]^2| \leq 2$.\footnote{The reader unfamiliar with these topological notions will find in Section~\ref{sectionstart} the set-theoretic consequences of theirs which we use in the proof.}
\end{thmd}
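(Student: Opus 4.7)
The plan is three-stage. The first stage is a topological reduction: by Sierpi\'nski's classical characterization (\cite{Sierpinski1920}), every countable nonempty metrizable dense-in-itself space is homeomorphic to $\mathbb Q$, and since $(X,\tau)$ is regular with point-countable basis $\mathcal B$, any countable $Y \s X$ automatically has a countable basis (each $y \in Y$ lies in only countably many $B \in \mathcal B$, so countably many basic sets meet $Y$) and is therefore metrizable by Urysohn. Hence it suffices to produce a countable dense-in-itself $Y \s X$ with $|c``[Y]^2| \leq 2$.

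The second stage is to extract from the hypotheses an uncountable ``crowded'' $X_1 \s X$ in the sense that $|B \cap X_1| \geq \aleph_1$ for every $x \in X_1$ and every $B \in \mathcal B$ with $x \in B$. This is an instance of a standard extraction in the cardinal-invariant theory of topological spaces (cf.~Hajnal--Juh\'asz \cite{MR264585}): were no such $X_1$ available, one could recursively remove ``light'' points of $X$ (those possessing a basic neighborhood meeting $X$ in only countably many yet-unremoved points) to construct a well-ordering of $X$ with closed initial segments, contradicting the failure of left-separation. Regularity and the point-countability of $\mathcal B$ make the recursion close up.

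The third stage --- the main step --- uses the coloring $c$ on $X_1$ to produce the desired $Y$. My plan is to emulate the Shelah-style PCF-inspired technique of \cite{Sh:881}: fix a countable elementary submodel $M \prec (H(\theta),\in)$ containing $X,\tau,c,\mathcal B, X_1$, and choose $x^* \in X_1 \setminus M$. Because the countably many basic neighborhoods of $x^*$ can be assumed to lie in $M$ and $X_1$ is crowded, a pigeonhole over the $K$-partition $\{y \in M \cap X_1 : c(\{x^*,y\}) = i\}_{i<K}$ produces an ``external'' color $i^*$ whose trace $A_{i^*}$ accumulates at $x^*$ through every basic neighborhood. A second pigeonhole inside $A_{i^*}$, reflected back into $M$ via elementarity, should yield an ``internal'' color $i^{**}$ such that a recursive bookkeeping construction gives a countable $Y = \{y_n : n \in \omega\} \s A_{i^*}$ which is dense-in-itself in its subspace topology and satisfies $c``[Y]^2 \s \{i^*, i^{**}\}$.

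The hard part will be executing that last recursion so both constraints hold simultaneously: density-in-itself of $Y$ (each chosen $y_n$ must become an accumulation point of later ones, requiring enough candidates near every previously-picked point) and the two-color restriction on all internal pairs $\{y_n,y_m\}$ (which curtails the pool of candidates). This is where the PCF-style type analysis of \cite{Sh:881} is expected to pay off: point-countability of $\mathcal B$ forces the relevant local constraints coming from $x^*$ to reflect down into $M$, so at each stage of the construction the pool of admissible candidates can be shown to remain crowded. That reflection in $\zfc$ is what replaces the large-cardinal reflection of \cite{MR4190059} and \cite{eisworth2023galvins}.
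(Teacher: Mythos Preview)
Your first stage is correct and matches the paper. Your second stage is plausible (though the paper instead invokes Fleissner's theorem to obtain stationarily many countable $M\in\pom(X)$ with $\overline M\setminus M\neq\emptyset$; it never passes to a crowded subspace). The real problem is Stage~3, which as written is not a proof sketch but a hope.

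First, a concrete error: you choose $x^*\in X_1\setminus M$ and then assert that $\mathcal B_{\{x^*\}}\s M$. Point-countability alone does not give this; you need $x^*\in\overline{M\cap X}$ (compare the paper's Lemma~\ref{lemma42}). Merely taking $x^*$ outside a countable elementary $M$ does not put it in the closure of $M\cap X$. The paper secures exactly this by working from the start with the stationary set $\Lambda_1$ of models $M$ for which such an $x^*=F(M)$ exists.

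More seriously, the single-model, single-$x^*$ architecture cannot drive the recursion you describe. Your set $A_{i^*}$ is a \emph{countable} subset of $M$, and Baumgartner's theorem (quoted in the introduction) shows that for countable spaces no two-colour dense-in-itself set need exist; so you cannot hope to find $Y$ inside any fixed countable set without continually importing new information from outside. Moreover $A_{i^*}$ is defined using $x^*\notin M$, so $A_{i^*}\notin M$ and there is nothing for elementarity to reflect. The colour $i^*$ governs pairs $\{x^*,y\}$, but tells you nothing about pairs $\{y,y'\}$ with $y,y'\in A_{i^*}$, which are the pairs you actually need to control. After you choose $y_0,\ldots,y_n$, the candidate set for $y_{n+1}$ must simultaneously (a) accumulate at some earlier $y_k$ and (b) take prescribed colours against \emph{all} of $y_0,\ldots,y_n$; nothing in your outline explains why this set stays nonempty, let alone rich enough to continue.

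What the paper actually does is work not with one model but with $\mathcal I$-positive sets of models for a fine normal ideal $\mathcal I$ on $\pom(H(\Omega))$, and with the map $M\mapsto F(M)\in\overline{M\cap X}\setminus M$. Pairs of positive sets carry a notion of $(i,j)$-\emph{saturation} (Definition~\ref{defnweaksaturation}) that encodes exactly the two-sided colour compatibility needed to keep the recursion alive; an exhaustion argument over a partial order that lets the ideal vary (Section~\ref{sectionstartingpoint}) produces a \emph{decisive} pair $(A,\mathcal I)$ with a fixed $(i,j)$. The Shelah input from \cite{Sh:881} is not ``pigeonhole plus elementarity'' but a rank argument: one forms, for each $M$, the well-founded tree $\mathcal T^M$ of finite attempts to build the winner sequence, takes the hybrid rank $\rk_{\mathcal I+Z}(M\mapsto\rk^M(\sigma))$, and uses the operator $\mathbf I[f,\mathcal I]$ (Subsection~\ref{subsectionJdf}) to pass to a larger normal ideal $\mathcal J$ in which the decisive property of $(A,\mathcal I)$ still yields a saturated pair, thereby extending $\sigma$ and strictly dropping the hybrid rank. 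This contradiction produces an $(i,j)$-winner below every positive $B\s A$, and then the Raghavan--Todor{\v c}evi{\'c} construction (Theorem~\ref{RaghavanTodorcevicconstruction}) builds $Y$. None of this machinery---saturation between positive sets, the ideal operator $\mathbf I[f,\mathcal I]$, the hybrid rank---appears in your outline, and it is precisely what replaces the large-cardinal reflection of \cite{MR4190059,eisworth2023galvins}.
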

\begin{proof}[Proof of Theorem~A]
In order to see that Theorem~A follows from Theorem~D, note that the reals have a countable basis, and that any uncountable set of reals has a countable subset which is dense in it. So, any uncountable set of reals satisfies the hypotheses of Theorem~D.
\end{proof}
The motivation for considering this specific class of topological spaces is laid out by  Raghavan and Todor{\v c}evi{\' c} in \cite[\S~3]{MR4190059}.
As they argue there, this is ``an essentially optimal class of topological spaces" for which a result such as Theorem~A holds. As for the matter of consistency strength, on \cite[pg.~22]{MR4190059} they say ``it would not be surprising if [Theorem~D] turns out to be equiconsistent with some large-cardinal axiom." An example is given from \cite{MR2304318} to justify this statement.
Eisworth \cite{eisworth2023galvins} improved the large cardinal bounds by obtaining Theorem~D from a class of Ramsey cardinals. As our result shows, $\zfc$ is enough.

Rather than merely repeating \cite[\S~3]{MR4190059} here to justify the class of topological spaces in Theorem~D, we refer the reader to it and focus here on the class of metrisable spaces where Theorem~D provides a provably optimal result. This too can be found in more detail in \cite[\S~2,\S~3]{MR4190059} so our treatment will be brief and we refer the reader to \cite{MR4190059} for a more comprehensive treatment.\footnote{According to \cite[pg.~9]{MR4190059} special stationary Aronszajn lines with the order topology are another example of spaces to which Theorem~D applies.}

It is clear that a metric space is regular Hausdorff; by the Nagata-Bing-Smirnov Metrisation Theorem (see \cite[\S~6]{MR3728284}), it also has a point-countable basis. So Theorem~D combined with the following lemma due to Fleissner yields a proof of Corollary~E below.
\begin{lemma}[Fleissner, \cite{MR825729}] Let $X$ be a metric space. The following are equivalent:
\begin{enumerate}
\item $X$ is left-separated;
\item $X$ is $\sigma$-discrete: there is a decomposition $X = \bigcup_{n< \omega} Y_n$ where each $Y_n$ is a discrete subspace with the relative topology.
\end{enumerate}
\end{lemma}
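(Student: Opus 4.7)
The plan is to prove both directions of the equivalence, with the harder one being $(ii) \Rightarrow (i)$. For that direction the main trick is to refine the decomposition $X = \bigcup_n Y_n$ into one whose pieces are \emph{closed} in $X$. The key metric-space observation is that any uniformly discrete subset $Z$ (pairwise distances $\geq \varepsilon$ for some fixed $\varepsilon > 0$) is closed, since every convergent sequence in $Z$ is Cauchy and hence eventually constant. So for each $n, k < \omega$ I set $Z_{n,k} := \{y \in Y_n : B(y, 1/k) \cap Y_n = \{y\}\}$; this is uniformly discrete (pairwise distances $\geq 1/k$), hence closed in $X$, and $Y_n = \bigcup_k Z_{n,k}$. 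Renumbering these countably many sets as $(Z_m)_{m < \omega}$ gives a covering of $X$ by closed uniformly discrete sets. I then set $\rho(x) := \min\{m : x \in Z_m\}$, fix an arbitrary well-order $<_m$ of $Z_m \setminus \bigcup_{k<m} Z_k$ for each $m$, and order $X$ lexicographically: $x <^* y$ iff $\rho(x) < \rho(y)$, or $\rho(x) = \rho(y) = m$ and $x <_m y$. An initial segment $\{y : y <^* x\}$ with $\rho(x) = n$ is $\bigcup_{m<n} Z_m$ (closed, as a finite union of closed sets) together with $\{y \in Z_n \setminus \bigcup_{k<n} Z_k : y <_n x\}$, and a short case analysis using uniform discreteness of $Z_n$ shows the whole set is closed.

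For $(i) \Rightarrow (ii)$, let $<^*$ be a left-separating well-order of $X$, and use the Nagata--Bing--Smirnov metrisation theorem to fix a $\sigma$-discrete base $\mathcal{B} = \bigcup_n \mathcal{B}_n$ for $X$ (each $\mathcal{B}_n$ a discrete family of open sets). For each $x \in X$ the initial segment $\{y : y <^* x\}$ is closed and omits $x$, so some $U \in \mathcal{B}$ satisfies $x \in U$ and $x = \min_{<^*}(X \cap U)$. Let $Y_n$ collect those $x$ witnessed by some $U \in \mathcal{B}_n$, so $X = \bigcup_n Y_n$. To check that $Y_n$ is discrete, I fix $x \in Y_n$ with witness $U$ and use discreteness of the family $\mathcal{B}_n$ to pick an open $W \ni x$ meeting only $U$ among elements of $\mathcal{B}_n$; then $W \cap U$ isolates $x$ in $Y_n$, because any $y \in Y_n \cap W \cap U$ must have its own $\mathcal{B}_n$-witness contain $y$ and meet $W$, forcing that witness to be $U$ and hence $y = \min_{<^*}(X \cap U) = x$.

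The main obstacle is the refinement step in $(ii) \Rightarrow (i)$: without it, a lexicographic well-ordering built from a decomposition into merely discrete pieces need not have closed initial segments, because a discrete subspace of a metric space is not in general closed in the ambient space. Once the refinement into closed uniformly discrete sets is in hand, both directions reduce to straightforward checks that invoke metric-space-specific tools---uniform discreteness implies closedness on one side, and the existence of a $\sigma$-discrete base on the other.
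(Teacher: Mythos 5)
Your proof is correct and self-contained. The paper itself does not prove this lemma; it merely cites Fleissner's original article (pg.~666 for (i)$\Rightarrow$(ii), Theorem~2.2 for (ii)$\Rightarrow$(i)), so there is no in-paper argument to compare against. Your argument is along the standard lines: for (ii)$\Rightarrow$(i) the key refinement $Y_n = \bigcup_k Z_{n,k}$ into uniformly discrete (hence closed) pieces is exactly what is needed, and you correctly observe that any subset of a uniformly discrete set is again uniformly discrete and closed, so the lexicographic initial segments are finite unions of closed sets; for (i)$\Rightarrow$(ii), using a $\sigma$-discrete base via Nagata--Bing--Smirnov and defining $Y_n$ by the level at which the left-separating witness lives is the standard device. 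One small simplification you could note: in (ii)$\Rightarrow$(i) the ``short case analysis'' is unnecessary, since the portion of the initial segment lying in $Z_n \setminus \bigcup_{k<n} Z_k$ is automatically closed as a subset of the uniformly discrete $Z_n$, so the initial segment is literally a finite union of closed sets.
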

\begin{proof} For (i) implies (ii) see \cite[pg.~666]{MR825729} and for (ii) implies (i) see \cite[Theorem~2.2]{MR825729}.
\end{proof}
\begin{core} Let $X$ be a metric space which is not $\sigma$-discrete. Then for every positive natural number $K$ and colouring $c:[X]^2 \rightarrow K$, there is a $Y \s X$ homeomorphic to $\mathbb Q$ such that $|c``[Y]^2| \leq 2$.
\end{core}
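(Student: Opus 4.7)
The plan is to derive Corollary~E as a direct application of Theorem~D to $X$ regarded as a topological space, after verifying each of the three hypotheses of Theorem~D for an arbitrary metric space that fails to be $\sigma$-discrete. The entire content lies in the already-established Theorem~D together with the Fleissner and metrisation results invoked in the surrounding discussion, so the proof is essentially assembly rather than new argument.

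First, I would observe that every metric space $(X,d)$ is regular Hausdorff. Given a closed $C \s X$ and $x \in X \setminus C$, the number $r := d(x, C)$ is strictly positive because $C$ is closed, and then $V := \ball(x, r/3)$ and $U := \bigcup_{y \in C} \ball(y, d(x,y)/3)$ are disjoint open sets separating $x$ from $C$; this gives condition (i) of Definition~\ref{defnclassofspaces}.

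Next, I would invoke the Nagata-Bing-Smirnov Metrisation Theorem, as cited in the excerpt just before the statement of Fleissner's lemma, to produce a $\sigma$-locally finite open base $\mathcal B = \bigcup_{n<\omega} \mathcal B_n$ for $(X, \tau)$. Since each $\mathcal B_n$ is locally finite, each point $x \in X$ has a neighbourhood meeting only finitely many members of $\mathcal B_n$, and in particular $x$ belongs to only finitely many members of $\mathcal B_n$. Taking the union over $n$, $\{U \in \mathcal B \mid x \in U\}$ is countable, so $\mathcal B$ is point-countable in the sense of condition (ii).

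Finally, Fleissner's Lemma stated above tells us that, for metric spaces, being left-separated is equivalent to being $\sigma$-discrete; since $X$ is by hypothesis not $\sigma$-discrete, it is not left-separated, so condition (iii) of Definition~\ref{defnclassofspaces} also fails for $X$ in the required direction. All three hypotheses of Theorem~D are thus satisfied, and applying Theorem~D to the given colouring $c:[X]^2 \rightarrow K$ produces a $Y \s X$ homeomorphic to $\mathbb Q$ with $|c``[Y]^2| \leq 2$, as desired. There is no serious obstacle here, since all the hard work is packaged into Theorem~D; the only thing one must be careful about is ensuring that the Nagata-Bing-Smirnov output is parsed correctly as yielding point-countability, which is immediate from local finiteness.
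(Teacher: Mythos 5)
Your proof is correct and follows exactly the same route as the paper: regularity and point-countability of the basis (via Nagata--Bing--Smirnov) are the two positive hypotheses of Theorem~D, and Fleissner's lemma converts ``not $\sigma$-discrete'' into ``not left-separated.'' The paper states these three facts more tersely, but the assembly is identical.
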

The claim that Corollary~E is optimal in the class of metric spaces is justified by the following three lemmas. The first is trivial, showing that we cannot increase the number of colours.
\begin{lemma}\label{lemma13} Let $X$ be a metric space. Then there is $c:[X]^2 \rightarrow\mathbb N$ such that for every $Y \s X$ which is not discrete, $c``[Y]^2$ is infinite.
\end{lemma}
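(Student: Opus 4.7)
The plan is to define the colouring $c$ purely in terms of the metric, so that small distances produce large colours, and then exploit the fact that non-discreteness in a metric space forces a sequence with distances tending to $0$.

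Concretely, fix a metric $d$ witnessing that $X$ is metrisable and define $c:[X]^2 \to \mathbb N$ by
\[
c(\{x,y\}) = \max\bigl(0,\, \lfloor -\log_2 d(x,y) \rfloor \bigr),
\]
so that the colour records (a discretisation of) the magnitude of $d(x,y)$, with all pairs at distance $\geq 1$ collapsed to colour $0$.

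Now suppose $Y \s X$ is not discrete in the subspace topology. Then some $y \in Y$ fails to be isolated in $Y$, so $y$ lies in the closure of $Y \setminus \{y\}$. Since $X$ is metric, this closure condition produces a sequence $(y_n)_{n<\omega}$ in $Y\setminus\{y\}$ with $d(y_n,y) \to 0$. By definition of $c$, we have $c(\{y_n,y\}) \to \infty$, so $c``[Y]^2$ is infinite.

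No genuine obstacle arises: the only point worth remarking on is that the ``not discrete'' hypothesis is used via the existence of a single non-isolated point in $Y$, and metrisability is used to pass from a closure point to a convergent sequence with distinct terms. Both are standard, and this matches the ``trivial'' annotation in the surrounding text.
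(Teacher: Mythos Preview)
Your proof is correct and takes essentially the same approach as the paper: both define $c$ so that the colour of $\{x,y\}$ grows without bound as $d(x,y)\to 0$, and then use a non-isolated point of $Y$ to produce pairs with arbitrarily small distance. The only cosmetic difference is the choice of discretisation --- the paper uses (essentially) $\lfloor 1/d(x,y)\rfloor$ where you use $\lfloor -\log_2 d(x,y)\rfloor$ --- which is immaterial.
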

\begin{proof}[Proof sketch]
Let $d$ denote the metric on $X$. 
Let $c: [X]^2 \rightarrow \mathbb N$ be given by: for $x\neq y$ from $X$, 
$$c(x,y):= \begin{cases} \sup\{k \in \mathbb N \mid \frac{1}{k} \geq d(x,y)\} &\text{ if } d(x,y) \leq 1; \\
0&\text{ otherwise} .
\end{cases}$$
Then it is easy to see that $c$ is as required.
\end{proof}
The second, showing that an analogue of Sierpi{\'n}ski's colouring holds for metric spaces, we learnt from \cite[Theorem~12]{MR4190059}.
\begin{lemma} Let $X$ be a metric space. Then there is $c:[X]^2 \rightarrow\{0,1\}$ such that for every $Y \s X$ which is dense-in-itself, $c``[Y]^2 = \{0,1\}$.
\end{lemma}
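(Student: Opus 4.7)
The plan is to derive this lemma as a straightforward corollary of Lemma~\ref{lemma13}, by composing the colouring produced there with the mod-$2$ map from $\mathbb{N}$ onto $\{0,1\}$. The point is that any dense-in-itself subspace of $X$ is in particular non-discrete, so the $\mathbb{N}$-valued colouring of Lemma~\ref{lemma13} takes infinitely many values on it, and hence both parities.

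More precisely, let $c^{*} : [X]^2 \to \mathbb{N}$ be the colouring produced in Lemma~\ref{lemma13}, so that $c^{*}(x,y) = \sup\{k \in \mathbb{N} : 1/k \geq d(x,y)\}$ whenever $d(x,y) \leq 1$, and $c^{*}(x,y) = 0$ otherwise. Define $c(x,y) := c^{*}(x,y) \bmod 2$. To check that $c$ has the required property, fix a dense-in-itself $Y \s X$ with $[Y]^2 \neq \emptyset$. Any $y_0 \in Y$ is a limit point of $Y$, so for each $\epsilon > 0$ there exists $y_{\epsilon} \in Y \setminus \{y_0\}$ with $d(y_0, y_{\epsilon}) < \epsilon$. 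This produces pairs in $[Y]^2$ of arbitrarily small positive distance, so $c^{*}$ is unbounded on $[Y]^2$. Any unbounded subset of $\mathbb{N}$ meets both residue classes modulo $2$, and therefore $c``[Y]^2 = \{0,1\}$.

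There is essentially no obstacle here: the lemma reduces mechanically to Lemma~\ref{lemma13} together with the trivial observation that a non-empty dense-in-itself subspace of a metric space is non-discrete. The only minor care needed is the convention that the dense-in-itself sets under consideration have at least two points (otherwise $[Y]^2 = \emptyset$ and the conclusion fails vacuously); this is automatic in the metric setting, where any non-empty dense-in-itself subspace is in fact infinite.
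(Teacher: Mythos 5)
Your reduction does not work, and the gap is at the very last step: the claim that ``any unbounded subset of $\mathbb N$ meets both residue classes modulo $2$'' is simply false. The set of even numbers is unbounded but avoids the odd residue class entirely. And this failure is not merely hypothetical for the colouring at hand: take $X = 2^\omega$ with the ultrametric $d(x,y) := 2^{-(n+1)}$ where $n$ is the least coordinate on which $x$ and $y$ differ. This is a genuine metric space, it is dense-in-itself (indeed perfect), every pairwise distance lies in $\{2^{-1}, 2^{-2}, 2^{-3},\ldots\}$, and so the colouring $c^{*}$ from Lemma~\ref{lemma13} takes only the values $2, 4, 8, \ldots$ on it --- all even. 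Your $c = c^{*} \bmod 2$ is therefore constantly $0$ on $[X]^2$, not surjective onto $\{0,1\}$.

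The flaw is unrepairable within your strategy: any postcomposition of $c^{*}$ with a map $f : \mathbb N \to \{0,1\}$ has an infinite fibre $f^{-1}(\varepsilon)$ for some $\varepsilon \in \{0,1\}$, and one can realise precisely that fibre as the range $c^{*}``[Y]^2$ for a suitable dense-in-itself ultrametric space $Y$ (generalise the example above by choosing the distance scale along that fibre). Hence no colouring factoring through $c^{*}$ can witness the lemma. The paper's actual proof is of a different nature: it embeds $X$ into $[0,1]^\kappa$, takes the lexicographic linear order there, and then applies Sierpi\'nski's well-order-versus-linear-order construction, which uses genuinely order-theoretic information rather than metric information. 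That is the step you would need to replace your mod-$2$ trick with.
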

\begin{proof}[Proof sketch] 
Every metric space can be embedded into a space of the form $[0,1]^\kappa$ for some cardinal $\kappa$. Considering $X$ to be a subset of $[0,1]^\kappa$ for some cardinal $\kappa$, recall the \emph{lexicographic order} $<_{\mathrm{lex}}$ on $[0,1]^\kappa$ is defined via: for distinct $f, g \in [0,1]^\kappa$,
$$f<_{\mathrm{lex}} g \iff f(\alpha) < g(\alpha) \text{ where }\alpha:=\min\{\beta< \kappa \mid f(\beta) \neq g(\beta)\}.$$
Then we can imitate Sierpi{\' n}ski's colouring by defining a function $c:[X]^2 \rightarrow 2$ using $<_{\mathrm{lex}}$ and $<^*$ where $<^*$ is some well-ordering of $X$. See \cite[Lemma~11]{MR4190059} and \cite[Theorem~12]{MR4190059} for details. 
\end{proof}
The last is the following unpublished result of Todor{\v c}evi{\' c} and Weiss which provides a negative result for any $\sigma$-discrete metric space.
\begin{lemma}[Todor{\v c}evi{\'c}-Weiss, \cite{todorcevicweiss}] Let $X$ be a $\sigma$-discrete metric space. Then there is $c: [X]^2 \rightarrow \mathbb N$ such that for every $Y \s X$ homeomorphic to $\mathbb Q$, $c``[Y]^2 = \mathbb N$.
\end{lemma}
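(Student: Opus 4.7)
The plan is to exploit the $\sigma$-discrete decomposition of $X$ together with the metric structure. First, I would replace the given decomposition by a disjoint one: write $X = \bigsqcup_{n \in \omega} X_n$ with each $X_n$ discrete in $X$ (obtained by subtracting earlier levels from the original discrete pieces, which only makes each piece smaller and hence still discrete in $X$). Each $x \in X$ thus gets a canonical \emph{level} $n(x) \in \omega$, and the discreteness of $X_{n(x)}$ in $X$ lets me fix $\epsilon(x) > 0$ with $B(x,\epsilon(x)) \cap X_{n(x)} = \{x\}$.

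Before describing the coloring, I would establish the combinatorial fact that underlies the whole argument: any $Y \s X$ homeomorphic to $\mathbb Q$ meets infinitely many of the $X_n$. Indeed, each intersection $Y \cap X_n$ is discrete as a subspace of $Y$, hence nowhere dense in $Y$. But a set homeomorphic to $\mathbb Q$ is not a finite union of nowhere-dense subsets of itself: the intersection of finitely many open dense subsets of any topological space is again open and dense, in particular non-empty, so the closures of the pieces cannot cover $Y$. Hence the set $N(Y) := \{n : Y \cap X_n \neq \emptyset\}$ is infinite.

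For the coloring, I would fix at each $x$ a decreasing sequence $\eta_0(x) > \eta_1(x) > \dots$ of positive reals tending to $0$ with $\eta_0(x) \leq \epsilon(x)$, a fixed bijection $\langle\cdot,\cdot\rangle \colon \omega \times \omega \to \omega$, and a fixed well-ordering $<^*$ of $X$. For a pair $\{x,y\}$, let $a$ be the element of least level (ties broken by $<^*$) and $b$ the other; set $c(\{x,y\}) := \langle n(a),\, k_a(b) \rangle$, where $k_a(b)$ is the largest $k$ with $d(a,b) < \eta_k(a)$, defaulting to $0$ when no such $k$ exists. Informally, the color records both the base level of the pair and a metric-scale index around the lower-level endpoint.

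The main obstacle is to verify that, for every $Y \s X$ homeomorphic to $\mathbb Q$ and every target color $t = \langle n^*, k^*\rangle$, some pair in $[Y]^2$ receives color $t$. The strategy is to pick $a \in Y$ with $n(a) = n^*$ and then locate $b \in Y \setminus \{a\}$ with $d(a,b)$ falling in the annulus $[\eta_{k^*+1}(a), \eta_{k^*}(a))$. The level-matching step can be handled via the observation on $N(Y)$ together with a careful reindexing of $\langle\cdot,\cdot\rangle$ so that every target $t$ can be realized using levels actually attained by $Y$. The harder step is realizing the prescribed annulus, since the distances $\{d(a,b) : b \in Y\setminus\{a\}\}$ accumulate at $0$ but need not hit every annulus for a given $a$. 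I would overcome this either by a careful choice of the sequences $\eta_k(x)$ or, more robustly, by replacing the annulus-index $k_a(b)$ by a combinatorial invariant such as an index into a pre-enumerated family of ``test'' neighborhoods of $a$ chosen using the $\sigma$-discreteness of $X$, so that any countable dense-in-itself set through $a$ is forced to meet cofinitely many of them. This last step is, in my view, the crux of the argument, and is analogous to the core of Baumgartner's original construction in the countable Hausdorff setting.
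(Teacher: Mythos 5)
The paper does not supply a proof of this lemma: it is cited to an unpublished manuscript of Todor\v cevi\'c and Weiss, so there is no argument in the paper to compare against. Evaluating your proposal on its own terms: the preparatory steps are fine (refining to a disjoint decomposition $X=\bigsqcup_n X_n$, and the observation that $N(Y)=\{n:Y\cap X_n\neq\emptyset\}$ is infinite because $Y\cong\mathbb Q$ is dense-in-itself and a finite union of relatively nowhere dense sets cannot cover it), but the coloring you then define has two genuine gaps, one of which is more serious than you acknowledge.

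The first gap is the level coordinate $n(a)$. Nothing forces $N(Y)=\omega$; for instance $Y$ could meet only even levels. If $n^*\notin N(Y)$ then no pair in $[Y]^2$ receives a color of the form $\langle n^*,k\rangle$, so $c``[Y]^2\neq\mathbb N$. Your proposed fix, ``a careful reindexing of $\langle\cdot,\cdot\rangle$ so that every target $t$ can be realized using levels actually attained by $Y$,'' cannot work: the coloring $c$ must be fixed once and for all, before $Y$ is given, while $N(Y)$ depends on $Y$. What is needed is a first coordinate whose every value is realizable in $Y$ regardless of which levels $Y$ happens to meet; the bare level $n(a)$ is not such an invariant, and the proposal does not supply one.

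The second gap is the one you flag yourself: for a fixed $a\in Y$, the set $\{d(a,b):b\in Y\setminus\{a\}\}$ accumulates at $0$ but can avoid any particular annulus $[\eta_{k^*+1}(a),\eta_{k^*}(a))$. ``A careful choice of the sequences $\eta_k(x)$'' cannot repair this for the same reason as above: the sequence $\eta_k(a)$ is fixed before $Y$ is chosen, and for any fixed sequence one can build $Y\cong\mathbb Q$ whose trace near $a$ avoids all but finitely many of these annuli. Exploiting the freedom to vary $a$ within $Y$, not just $b$, is essential, but then the level of $a$ is no longer under control, which feeds back into the first gap. Your closing paragraph gestures toward replacing $k_a(b)$ by a combinatorial invariant built from the $\sigma$-discrete structure, and you are right that this is the crux, but the proposal does not carry it out. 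As written, the argument is incomplete and the coloring as defined does not satisfy the conclusion.
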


\subsection{Structure of the paper}
We now describe the layout of the paper. 

In Section~\ref{sectionnotation} we collect in one place general notation and other preliminaries. 

In Section~\ref{sectionideals} we recall standard notation, terminology, and facts about ideals on countable sets. In Subsection~\ref{subsectionJdf} we generalise to this context an operator on ideals  introduced by Shelah in his work on PCF theory. We will use this in the proof of the main lemma.

In Section~\ref{sectionstart} we start the proof of Theorem~D. In particular we fix here various objects we shall use throughout the proof. We also explain here the role of the topological hypotheses of Theorem~D. Beyond this point the proof of Theorem~D proceeds in three steps. 

In Section~\ref{sectionRT} we recall the idea of Raghavan and Todor{\v c}evi{\'c} which provides us with a sufficient condition to construct the homeomorphic copy of the rationals as required. The main result here is Theorem~\ref{RaghavanTodorcevicconstruction}. This is step one. The main lemma, which is step three, is to show that this sufficient condition is met. 

In Section~\ref{sectionstartingpoint} we perform step two, which is to find the appropriate input for our main lemma. The main result here is Lemma~\ref{firstmainlemma}.

In Section~\ref{sectionmainlemma} we prove the main lemma, Lemma~\ref{Shelahidea} and conclude the proof of Theorem~D.
\subsection{Discussion of the proof}
As we have mentioned, our main result is the same as that of \cite{MR4190059} with the difference being that we work only in $\zfc$. We now compare our result with \cite{MR4190059}, the reader can look there for undefined notions. 

There are two places where large cardinals are required in \cite{MR4190059}. One is simply for the definition of Woodin's stationary tower forcing (see \cite{MR2723878} or \cite{MR2069032}), this needs a class of strongly inaccessible cardinals. The second is for the precipitousness of the stationary tower, this needs considerably more large cardinals: a class of Woodin cardinals or a strongly compact cardinal. Eliminating large cardinals from the first step is somewhat straightforward, the reader may compare the results of Section~\ref{sectionstart} with \cite[Lemma~26]{MR4190059}.

The key argument is in the proof of the main lemma, Lemma~\ref{Shelahidea}, in Section~\ref{sectionmainlemma}. This replaces the use of games related to precipitousness in \cite{MR4190059} and \cite{eisworth2023galvins} by an argument using ranks inspired by \cite{Sh:881}. \footnote{The argument can be translated into the language of precipitousness-type games however we do not do so over here.} 
In particular, this step achieves the more substantial reduction of large cardinals to a $\zfc$ argument. 

The necessary machinery for ranks is developed in Subsection~\ref{subsectionJdf}.
To make this machinery applicable we need to generalise the basic idea from \cite{MR4190059} by working not just with the fixed ideal of non-stationary sets, but allowing the ideal to vary. The idea of varying the ideals is present in \cite[\S~3.1]{eisworth2023galvins} as well, though our treatment is rather different.
In this greater generality we then repeat some arguments from \cite{MR4190059}. This is done in Section~\ref{sectionstartingpoint}.

The last piece is to restrict ourselves to the class of fine normal ideals, see Lemma~\ref{onestep}.

It should be clear by now that our paper heavily builds on the ideas of \cite{MR4190059}. However, there are enough minor differences that we have taken the decision to make the paper self-contained.
\section{Notation and preliminaries}\label{sectionnotation}
For a set $X$ we denote its powerset by $\p(X)$ and its cardinality by $|X|$ and the set of its ordered pairs by $X^2$ and the set of its two-element subsets by $[X]^2$. Note that this will also apply to natural numbers, so for $K$ a natural number, by $K^2$ we will designate the set of all ordered pairs $(i,j)$ for $i, j< K$ not necessarily distinct. For a function $f: X \rightarrow Y$ and $Z \s X$, $f``[Z]:=\{f(x) \mid x \in Z\}$.

By $\pom(X)$ we denote the collection of countable subsets of $X$,
 $$\pom(X):=\{M \s X \mid |M| < \aleph_1\}.$$
Ideals on sets of the form $\pom(X)$ play a central role in this paper. Notation and some important facts on them are collected in Section~\ref{sectionideals}. 

We will consider colourings $c: [X]^2 \rightarrow K$ for $X$ some set and $K$ a natural number. We will use the convention that if $x, y \in X$ and $c(x,y)$ has been defined, has some specific value etc., then there is an implicit assumption that $x \neq y$. On the other hand if we only know that $c(x,y) \neq i$ for some $i< K$ then this may also be because in fact $x=y$. 

We will talk about trees of finite sequences at some points, so let us refresh the reader's memory and introduce some relevant terminology we will use as well. For $Y$ some set, ${}^{< \omega}Y$ denotes the set of finite sequences of elements of $Y$.  For $\sigma, \rho$ finite sequences of elements of some set $Y$,  $\sigma \sqsubset \rho$ shall denote that $\sigma$ is a proper initial segment of $\rho$. The empty sequence we denote by $\langle\rangle$. If $\sigma, \rho$ are two elements of ${}^{< \omega}Y$, then $\sigma {}^\smallfrown \rho$ denotes the concatenation of $\sigma$ and $\rho$. 

We say that $\mathcal T \s {}^{< \omega}Y$ is a \emph{tree} if it is closed under taking initial segments. For $Y$ some set, and $\mathcal T \s {}^{< \omega}Y$ a tree, it is \emph{well-founded} if for every $b \in {}^\omega Y$, for some $k< \omega$, $b \restriction k \notin \mathcal T$. In this case we can define a ordinal-valued rank function. The details will be provided. 

For subtrees of ${}^{< \omega}\omega$, we use $<_{\mathrm{lex}}$ to refer to the lexicographic order, so that for $\sigma, \rho \in{}^{< \omega}\omega$ distinct and such that $\sigma \not \sqsubset  \rho$ and $\rho \not \sqsubset  \sigma$, $\sigma <_{\mathrm{lex}} \rho$ if for the least $i$ such that $\sigma(i) \neq \rho(i))$, we have that $\sigma(i) < \rho(i)$.

For a cardinal $\Omega$, $H(\Omega)$ denotes the collection of all sets of hereditary cardinality less than $\Omega$. In the context of taking elementary submodels by $H(\Omega)$ we shall mean the structure $(H(\Omega), \in, \triangleleft)$ where $\triangleleft$ will denote some fixed well-ordering of $H(\Omega)$. We will denote that $M$ is an elementary submodel of this structure by $M \prec H(\Omega)$.

Lastly we discuss some topological preliminaries. All spaces are assumed to be Hausdorff. The main class of spaces we study has been defined in Definition~\ref{defnclassofspaces}.

For $(X, \tau)$ be a topological space and $M \s X$, $\overline{M}$ will denote the closure of $M$. If  $\mathcal B \s \tau$ is a base for the topology, then for $Y \s X$, we let 
$$\mathcal B_Y:=\{U \in \mathcal B \mid U \cap Y \neq \emptyset\}.$$

Given a set $X$ and a point $x\notin X$, we will say that $X$ is \emph{dense around $x$} if every open neighbourhood of $x$ contains a point of $X$. We will say that $X$ is \emph{dense-in-itself} if for every $x \in X$, $X \setminus \{x\}$ is dense around $x$, in other words, $x$ is not an isolated point of $X$. 

An old theorem of Sierpi{\'n}ski \cite{Sierpinski1920} which we have mentioned before is that any countable non-empty metric space which is dense-in-itself is homeomorphic to $\mathbb Q$. 
Recall also the Urysohn Metrisation Theorem which states that any second countable regular Hausdorff space is metrisable. 

Finally we justify some statements from the introduction. Let $X \s \mathbb R$ be uncountable subset of the reals, then there is $Y \s X$ also uncountable such that $Y$ is dense in itself. Indeed, let $\mathcal U$ denote the set of all rational intervals $U$ such that $U \cap X$ is countable, and then let $Y:= X \setminus (\bigcup \mathcal U)$. Given such an $Y$, there is $Z\s Y$ countably infinite such that $Z$ is also dense-in-itself. 

We also mentioned that any $X \s \mathbb R$ which is homeomorphic to $\mathbb Q$ contains a set order-isomorphic to $\mathbb Q$, so let us quickly check the details. First we define an equivalence relation $\sim$ on $X$ via: $x \sim y$ if $(x, y) \cap X = \emptyset$. Then each equivalence class has size at most $2$. So let $Y \s X$ be such that $Y$ contains exactly one element of each equivalence class of $\sim$. So $Y$ must be countably infinite too. Then for any $x < y$ from $Y$, $(x, y) \cap Y \neq \emptyset$. In other words, $Y$ is dense as a linear order. Now discarding the end points of $Y$, of which there are at most two, we obtain a set $Z$ which is a countable dense linear order with no endpoints, so by Cantor's Theorem $Z$ is order-isomorphic to $\mathbb Q$, and hence every countable linear order can be order-embedded into $Z$. 
\section{Ideals of countable sets}\label{sectionideals}
Recall that for $H$ a non-empty set, $\pom(H)$ denotes the set $\{M \s H \mid |M| < \aleph_1\}$. We start with standard notation, terminology, and facts about $\pom(H)$ which can be found, for example, in \cite{MR1994835}. We finish with Subsection~\ref{subsectionJdf} which generalises some ideas of Shelah \cite{Sh:71} to this context.

The following can be found in \cite[\S~25]{MR1994835}.
\begin{definition} Let $H$ be a non-empty set. 
\begin{enumerate}
\item A set $ C \s \pom(H)$ is \emph{club in $\pom(H)$} if the following holds:
\begin{itemize} 
\item for every $N \in \pom(H)$ there is an $M \in C$ with $ N \s M$. 
\item for every $0< \xi< \omega_1$ and a $\s$-increasing sequence $\langle M_\nu \mid \nu<\xi\rangle$ of elements of $C$, $\bigcup_{\nu< \xi}M_\xi \in C$.
\end{itemize}
\item A set $S \s \pom(H)$ is \emph{stationary in $\pom(H)$} if for every $C\s \pom(H)$ club in $\pom(H)$, $S \cap C\neq\emptyset$.
\end{enumerate}
\end{definition}
\begin{fact} \label{lemma32}
Let $X \s H$ be non-empty sets and let $S \s \pom(X)$ and $T \s \pom(H)$.
\begin{enumerate}
\item If $S$ is stationary in $\pom(X)$ then $ S^{\uparrow H}$ is stationary in $\pom(H)$ where 
 $$S^{\uparrow H}:=\{M \in \pom(H)\mid M \cap X \in S\}.$$
\item If $T$ is stationary in $\pom(H)$ then $ T^{\downarrow X}$ is stationary in $\pom(X)$ where 
 $$ T^{\downarrow X}:= \{M \cap X \mid M \in T\}.$$
\end{enumerate}
\end{fact}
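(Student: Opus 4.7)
The plan is to reduce both parts to the standard characterization recorded in \cite[\S~25]{MR1994835}: a set $C \s \pom(H)$ contains a club if and only if there exist functions $f_n : H^{<\omega} \to H$ for $n < \omega$ such that $C \supseteq \{M \in \pom(H) \mid M \text{ is closed under every } f_n\}$; equivalently, $S \s \pom(H)$ is stationary iff for every such countable family there is $M \in S$ closed under all $f_n$. With this reformulation in hand, both halves become statements about transferring closure data between $X$ and $H$.

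For (ii), let $C \s \pom(X)$ be a club witnessed by functions $g_n : X^{<\omega} \to X$. Extend each $g_n$ to $f_n : H^{<\omega} \to H$ by setting $f_n(s) = g_n(s)$ for $s \in X^{<\omega}$ and $f_n(s) = x_0$ otherwise, for some fixed $x_0 \in X$. Stationarity of $T$ supplies $M \in T$ closed under every $f_n$. Then $N := M \cap X$ contains $x_0$ and is closed under every $g_n$, since for $s \in N^{<\omega}$ one has $g_n(s) = f_n(s) \in M \cap X = N$. Hence $N \in C \cap T^{\downarrow X}$.

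For (i), let $C \s \pom(H)$ be a club witnessed by functions $(f_n)_{n<\omega}$. Write $\cl(A)$ for the $\bar f$-closure of $A \s H$, which is countable whenever $A$ is. For each $s \in X^{<\omega}$ enumerate the countable set $\cl(\ran s) \cap X$ as $\{y_k(s) \mid k < \omega\}$ (with repetitions if it is finite), and define auxiliary functions $h_k : X^{<\omega} \to X$ by $h_k(s) := y_k(s)$. By stationarity of $S$ in $\pom(X)$ there is $N \in S$ closed under every $h_k$. Set $M := \cl(N) \in \pom(H)$; then $M$ is closed under each $f_n$, so $M \in C$. The crux is to verify $M \cap X = N$: the inclusion $N \s M \cap X$ is immediate, and for the converse, any $x \in M \cap X$ lies in $\cl(F)$ for some finite $F \s N$, so enumerating $F$ as a tuple $s \in X^{<\omega}$ gives $x = y_k(s) = h_k(s) \in N$ since $N$ is $h_k$-closed. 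Hence $M \cap X = N \in S$, i.e., $M \in C \cap S^{\uparrow H}$.

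The main technical content is the construction of the auxiliary functions $h_k$ in (i): they must be chosen precisely so that $h_k$-closure of $N$ inside $X$ captures the trace on $X$ of the larger $\bar f$-closure of $N$ inside $H$, which is what forces the identity $M \cap X = N$. Part (ii) is by comparison routine once (i) is set up in this form.
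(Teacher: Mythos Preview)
The paper does not prove this fact; it is stated without proof and attributed to \cite[\S~25]{MR1994835} as standard material. Your argument via the Kueker-style characterization of clubs (closure under countably many finitary functions) is correct and is precisely the standard proof one would reconstruct from that reference.

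Two minor loose ends, neither a genuine gap: in part~(i), when $s$ is the empty tuple the set $\cl(\ran s)\cap X$ could in principle be empty, so you should declare $h_k(\langle\rangle)$ to be some fixed $x_0\in X$; and in part~(ii), your claim that $N$ contains $x_0$ is only immediate when $M$ has an element outside $X$ (so that some $f_n$ sends a tuple to $x_0$), but if $M\subseteq X$ then $N=M$ is already $g_n$-closed and nonemptiness follows from the empty-tuple value of $g_n$. Both points are cosmetic.
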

The following can be found in \cite[pg.~301 and \S~25]{MR1994835}.
\begin{definition} Let $H$ a non-empty set. Let $\mathcal I \s \p(\pom(H))$. 
\begin{enumerate}
\item $\mathcal I$ is a \emph{ideal on $\pom(H)$} if 
\begin{itemize}
\item for every $A, B \in \mathcal I$, $A \cup B \in \mathcal I$,
\item for every $A \in \mathcal I$ and $B \s A$, $B \in \mathcal I$, and
\item $\pom(H)\notin \mathcal I$.
\end{itemize}
\item An ideal $\mathcal I$ is \emph{$\aleph_1$-complete} if given a sequence $\langle A_n \mid n< \omega \rangle$ of elements of $\mathcal I$, $\bigcup_{n< \omega}A_n\in \mathcal I$.
\item An $\aleph_1$-complete ideal $\mathcal I$ is a \emph{fine ideal on $\pom(H)$} if for every $x \in H$, $\{M \in\pom(H)\mid x \not\in M\} \in \mathcal I$.
\item For a sequence $\langle A_x \mid x \in H\rangle$ of elements of $\pom(H)$, the \emph{diagonal union} of  $\langle A_x \mid x \in H\rangle$, denoted $\diagonalunion_{\!x \in H} A_x$, is the set $\{M \in \pom(H) \mid M \in \bigcup_{x \in M} A_x\}$.
\item For a sequence $\langle A_x \mid x \in H\rangle$ of elements of $\pom(H)$, the \emph{diagonal intersection} of  $\langle A_x \mid x \in H\rangle$, denoted $\diagonalintersection_{x \in H} A_x$, is the set $\{M \in\pom(H)\mid M \in \bigcap_{x \in M} A_x\}$.
\item $\mathcal I$ is \emph{normal} if for every sequence $\langle A_x \mid x \in H\rangle$ of elements of $\mathcal I$, $\diagonalunion_{\!x \in H} A_x \in \mathcal I$. 
\item For an ideal $\mathcal I$ the dual filter, denoted $\mathcal I^*$, is the set
$$\mathcal I^*:= \{A \s\pom(H)\mid \pom(H) \setminus A \in\mathcal I\}.$$
\item For an ideal $\mathcal I$ the collection of all $\mathcal I$-positive sets, denoted $\mathcal I^+$, is the set
$$\mathcal I^+:= \{A \s\pom(H) \mid A \notin \mathcal I\}.$$
\end{enumerate}
\end{definition}
So by default we assume all ideals are proper ideals and that fine ideals are $\aleph_1$-complete. An example of a fine normal ideal is the collection of non-stationary subsets of $\pom(H)$ for $H$ an uncountable set.

Given $H'\s H$ and a sequence $\langle A_x \mid x\in H'\rangle$ of elements of $\pom(H)$, we will let $\diagonalunion_{\!x \in H'}A_x$ denote the set $\diagonalunion_{\!x \in H}B_x$ where for $x \in H'$ we take $B_x:= A_x$ and for $x \in H\setminus H'$ we take $B_x:= \emptyset$. We can similarly take diagonal intersections of a sequence indexed by a subset of $H$ by adding $\pom(H)$ at the missing indices. Note that given a sequence $\langle A_x \mid x\in H\rangle$ if $H= H_0\cup H_1$, then 
$$(\diagonalunion_{\!x \in H_0}A_x) \cup (\diagonalunion_{\!x \in H_1}A_x) = \diagonalunion_{\!x\in H}A_x.$$

For $A \s \pom(H)$, we will call a function $\Phi:A \rightarrow H$ a \emph{choice function} if for every $M \in A$, $\Phi(M) \in M$. 

The following are some standard facts which serve also to introduce some notation we shall use throughout the paper.
\begin{fact}\label{lemmaidealfacts} Let $H$ be a non-empty set and $\mathcal I$ an ideal on $\pom(H)$.
\begin{enumerate}
\item If $\mathcal I$ is a fine normal ideal, then $\mathcal I$ extends the ideal of non-stationary subsets of $\pom(H)$. 
\item $\mathcal I$ is normal if and only if $\mathcal I^*$ is closed under diagonal intersections.
\item Let $A \in \mathcal I^+$ and let $\Phi:A \rightarrow H$ be a choice function. Then there is $B \s A$, $B \in \mathcal I^+$, and an $x \in H$ such that $\Phi``[B] = \{x\}$.
\item For $A \in \mathcal I^+$, there is a smallest ideal on $\pom(H)$ containing $\mathcal I \cup\{\pom(H) \setminus A\}$ which we denote by $\mathcal I+A$, and it is described explicitly by the following: for $B \s\pom(H)$, 
$$B \in \mathcal I +A \iff B \cap A \in \mathcal I.$$
\item For $A \in \mathcal I^+$, the dual filter to $\mathcal I+A$, $(\mathcal I+A)^*$, is described explicitly by the following: for $B \s \pom(H)$, 
$$B \in (\mathcal I +A)^* \iff B \cup (\pom(H) \setminus A) \in \mathcal I^*.$$
\item If $\mathcal I$ is a fine normal ideal and $A \in \mathcal I^+$, then $\mathcal I+A$ is a fine normal ideal as well.
\item Suppose that $\mathcal I$ is a fine ideal. Let $\langle A_x \mid x \in H\rangle$ be a sequence of elements of $\pom(H)$ such that $A \in \mathcal I^+$, where $A:=\diagonalintersection_{x \in H} A_x$. Then, for every $x \in H$, $A_x \in (\mathcal I+A)^*$.
\end{enumerate}
\end{fact}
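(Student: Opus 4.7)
The plan is to dispatch the seven items in turn, using standard arguments from the theory of ideals on $\pom(H)$. Items (ii), (iv), (v) are essentially definitional unpackings. For (ii), I will note the De Morgan-type identity
$$\pom(H) \setminus \diagonalunion_{\!x \in H} A_x = \diagonalintersection_{x \in H} (\pom(H) \setminus A_x),$$
obtained by chasing membership, namely, $M$ lies in the left hand side iff $M \notin A_x$ for every $x \in M$. This immediately gives the duality between normality of $\mathcal I$ and closure of $\mathcal I^*$ under diagonal intersections. For (iv), the ideal axioms for $\mathcal I + A$ transfer from $\mathcal I$ using $(B_1 \cup B_2) \cap A = (B_1 \cap A) \cup (B_2 \cap A)$ and downward closure, while properness is witnessed by $A$ itself. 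For (v), one simply chases: $B \in (\mathcal I + A)^*$ iff $(\pom(H) \setminus B) \cap A \in \mathcal I$ iff $\pom(H) \setminus (B \cup (\pom(H) \setminus A)) \in \mathcal I$ iff $B \cup (\pom(H) \setminus A) \in \mathcal I^*$.

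For the pressing-down property (iii), which implicitly requires normality, I will argue by contradiction: if every fibre $B_x := \Phi^{-1}(\{x\})$ belonged to $\mathcal I$, then since $\Phi(M) \in M$ for every $M \in A$ we would have $A \s \diagonalunion_{\!x \in H} B_x$, and the latter lies in $\mathcal I$ by normality, contradicting $A \in \mathcal I^+$. Item (i) will then follow from (iii) combined with fineness and $\aleph_1$-completeness. Given a club $C \s \pom(H)$, there is a standard $f: H^{<\omega} \to H$ with $C_f := \{M \in \pom(H) : f``[M^{<\omega}] \s M\} \s C$, so it suffices to show $\pom(H) \setminus C_f \in \mathcal I$. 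Supposing instead $S := \pom(H) \setminus C_f \in \mathcal I^+$, I will assign to each $M \in S$ the least (in some fixed well-ordering of $H^{<\omega}$) tuple $s_M \in M^{<\omega}$ with $f(s_M) \notin M$, then use $\aleph_1$-completeness to shrink $S$ to an $\mathcal I^+$-set on which $|s_M|$ is a fixed $n$, and then iterate (iii) on the coordinate functions $M \mapsto s_M(i)$, $i < n$, to shrink to a set $S^* \in \mathcal I^+$ on which $s_M$ is a fixed tuple $s$. Every $M \in S^*$ then omits the single element $f(s) \in H$, contradicting fineness.

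For (vi), fineness and $\aleph_1$-completeness of $\mathcal I + A$ are immediate from the defining criterion $B \in \mathcal I + A \iff B \cap A \in \mathcal I$; normality follows from the identity $\diagonalunion_{\!x \in H}(A_x \cap A) = (\diagonalunion_{\!x \in H} A_x) \cap A$, which reduces the claim to normality of $\mathcal I$. For (vii), the hypothesis $A = \diagonalintersection_{x \in H} A_x$ together with fineness does the work: if $M \in A$ and $x \in M$ then $M \in A_x$ by the definition of diagonal intersection, so $(\pom(H) \setminus A_x) \cap A \s \{M \in \pom(H) : x \notin M\}$, which is in $\mathcal I$ by fineness. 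Via (v), this gives $A_x \in (\mathcal I + A)^*$.

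The only item which is not a short definitional calculation is (i), where the reduction from an arbitrary bad witness $s_M$ of unbounded-in-advance length to a fixed tuple $s$ requires first taming the length via $\aleph_1$-completeness and then peeling off the coordinates via iterated pressing-down. This is the one step where I anticipate spending the most care, ensuring that $\mathcal I$-positivity is preserved at each stage of the iteration.
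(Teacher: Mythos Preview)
The paper does not supply a proof of this Fact, stating it as standard with the surrounding discussion pointing to \cite{MR1994835}; your arguments are correct and are indeed the standard ones. Your observation that item~(iii) implicitly requires normality is well-taken --- the paper only ever invokes it for normal ideals.
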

For $H$ a non-empty set and $\mathcal I$ an ideal on $\pom(H)$ and $A \in \mathcal I^+$ we will use $\exists^{\mathcal I} M\in A\,\phi(M)$ to abbreviate that 
$$\{M\in A \mid \phi(M)\} \in (\mathcal I+A)^+,$$ 
and 
$\forall^{\mathcal I} M\in A \,\phi(M)$ to abbreviate that 
$$\{M \in A \mid \phi(M)\}\in (\mathcal I+A)^*.$$

\subsection{An operator on ideals following Shelah}\label{subsectionJdf}
Our purpose in this subsection is to generalise to ideals on $\pom(H)$ some ideas implicit in \cite{Sh:71}, one of Shelah's first papers on PCF theory. The definitions of the ranks below are variants of the ranks first used by Galvin and Hajnal \cite{MR376359}. This was continued by Shelah in \cite{Sh:71} and more. Definition~\ref{defnJdf} can be compared with  \cite[Remark~20(B)]{Sh:71} and Lemma~\ref{JDFfacts} with \cite[Lemma~19]{Sh:71}. More explicitly, what follows is an analogue of the $\mathbf J[f, D]$ operator of Shelah, see \cite[\S~5]{Sh:589}.

For the rest of this section, fix $H$ a non-empty set. For $\mathcal I$ an ideal on $\pom(H)$ and functions $f:\p_{\!\omega_1}(H)\rightarrow \ord$ and $g:\p_{\!\omega_1}(H)\rightarrow \ord$, $f<_{\mathcal I} g$ denotes that 
$$\{M \in \pom(H) \mid \neg( f(M) < g(M))\} \in \mathcal I.$$ 
We similarly define the relations $\leq_\mathcal I$ and $=_\mathcal I$.

When $\mathcal I$ is $\aleph_1$-complete we have that the relation $<_{\mathcal I}$ is well-founded on ${}^{\pom(H)} \ord$.
Hence we can define an ordinal-valued rank function $\rk_{\mathcal I}$ on functions $f:\pom(H)\rightarrow \ord$ recursively as follows: $\rk_{\mathcal I}(f)$ is the least ordinal $\alpha$ such that for every $g :\pom(H)\rightarrow \ord$, 
$$g<_\mathcal I f \implies \rk_{\mathcal I}(g) < \alpha.$$

It should be clear that for $\aleph_1$-complete ideals $\mathcal I \s \mathcal J$ on $\pom(H)$ and $f: \pom(H) \rightarrow \ord$, 
$$\rk_{\mathcal I}(f) \leq \rk_{\mathcal J}(f).$$
In particular, for $A \s B$ both elements of $\mathcal I^+$ for $\mathcal I$ an $\aleph_1$-complete ideal on $\pom(H)$, as $\mathcal I+B \s \mathcal I+A$, 
$$\rk_{\mathcal I+B}(f) \leq \rk_{\mathcal I+A}(f).$$

For the rest of this section, fix $\mathcal I$ an $\aleph_1$-complete ideal on $\pom(H)$. If $Z \in \mathcal I^*$ and $f, g: \pom(H)\rightarrow \ord$ are such that $f \restriction Z = g \restriction Z$, then we have that $\rk_{\mathcal I}(f) = \rk_{\mathcal I}(g)$, and for this reason we can talk about $\rk_{\mathcal I}(h)$ for any ordinal-valued function $h$ so long as $\dom(h) \in \mathcal I^*$. 

For completeness we point out the following simple fact.
\begin{fact}\label{lemma31}Let  $f: \pom(H)\rightarrow \ord$ and let $\alpha:=\rk_{\mathcal I}(f)$. 

Then for every $\beta< \alpha$, there is a $g: \pom(H) \rightarrow \ord$ with $g<_{\mathcal I} f$ and $\rk_{\mathcal I}(g) = \beta$.
\end{fact}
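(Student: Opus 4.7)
The plan is a direct argument using only the recursive definition of $\rk_{\mathcal{I}}$ together with the transitivity of $<_{\mathcal{I}}$. No induction on $\alpha$ is actually needed if one isolates the two ingredients first.

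First I would record the transitivity of $<_{\mathcal{I}}$: if $h <_{\mathcal{I}} g$ and $g <_{\mathcal{I}} f$, then
$$\{M\in\pom(H) \mid \neg(h(M)<f(M))\} \subseteq \{M \mid \neg(h(M)<g(M))\} \cup \{M \mid \neg(g(M)<f(M))\},$$
and the right-hand side lies in $\mathcal{I}$ by closure of the ideal under finite unions, so $h<_{\mathcal{I}} f$. This depends only on $\mathcal{I}$ being an ideal; the $\aleph_1$-completeness was used earlier only to guarantee well-foundedness and hence the existence of $\rk_{\mathcal{I}}$.

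Next I would unpack the defining minimality of the rank into the usual supremum form. By the definition of $\rk_{\mathcal{I}}(f)$ as the least ordinal $\alpha$ such that every $g<_{\mathcal{I}}f$ has $\rk_{\mathcal{I}}(g)<\alpha$, we obtain the two clauses: (a) for every $g<_{\mathcal{I}}f$, $\rk_{\mathcal{I}}(g)<\alpha$; and (b) no ordinal smaller than $\alpha$ has property~(a), so for every $\gamma<\alpha$ there exists some $g<_{\mathcal{I}}f$ with $\rk_{\mathcal{I}}(g)\geq\gamma$.

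Now fix $\beta<\alpha$. By~(b) applied to $\gamma=\beta$, the set
$$S_f^\beta := \{\rk_{\mathcal{I}}(g) \mid g<_{\mathcal{I}} f \text{ and }\rk_{\mathcal{I}}(g)\geq\beta\}$$
is a nonempty set of ordinals. Let $\delta$ be its least element and fix $g_0<_{\mathcal{I}} f$ realising it, so $\rk_{\mathcal{I}}(g_0)=\delta\geq\beta$. I claim $\delta=\beta$, which gives the required $g=g_0$. Suppose instead $\delta>\beta$. Then by clause~(b) applied to $g_0$ in place of $f$ (with $\gamma=\beta$), there exists $g_1<_{\mathcal{I}} g_0$ with $\rk_{\mathcal{I}}(g_1)\geq\beta$. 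By clause~(a) for $g_0$, $\rk_{\mathcal{I}}(g_1)<\delta$, and by the transitivity established in the first step, $g_1<_{\mathcal{I}} f$. Hence $\rk_{\mathcal{I}}(g_1)\in S_f^\beta$ with $\rk_{\mathcal{I}}(g_1)<\delta$, contradicting minimality of $\delta$. So $\rk_{\mathcal{I}}(g_0)=\beta$, as required.

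I do not anticipate any real obstacle; the lemma is a standard property of ranks in a well-founded, transitive relation. The only point that deserves explicit mention in the write-up is the transitivity step, since $<_{\mathcal{I}}$ is defined mod a non-trivial ideal and a reader might otherwise worry that the two ``exceptional'' sets could combine badly.
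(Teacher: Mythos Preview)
The paper does not actually supply a proof of this fact: it is introduced with ``For completeness we point out the following simple fact'' and then immediately followed by the next definition. So there is nothing to compare against in terms of approach.

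Your argument is correct. The transitivity observation is sound (and worth making explicit, as you note), clause~(b) is exactly the failure of minimality for ordinals below $\alpha$, and the minimality-of-$\delta$ step cleanly pins down a witness of rank exactly $\beta$. This is a standard property of ranks attached to any transitive well-founded relation, and your write-up would serve perfectly well as the omitted proof.
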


\begin{definition}\label{defnJdf}
Let $f : \pom(H)\rightarrow\ord$. 
Then
$$\mathbf I[f, \mathcal I]:= \mathcal I \cup \{A \in \mathcal I^+ \mid \rk_{\mathcal I+A}(f) > \rk_\mathcal I(f)\}.$$
\end{definition}
\begin{lemma}\label{JDFfacts} Let $f: \pom(H)\rightarrow \ord$ and let $\mathcal I$ be a fine ideal on $\pom(H)$.
\begin{enumerate}
    \item $\mathbf I[f, \mathcal I]$ is a fine ideal on $\pom(H)$ extending $\mathcal I$.
    \item If $\mathcal I$ is a normal ideal then $\mathbf I[f, \mathcal I]$ is a normal ideal.
\end{enumerate}  
\end{lemma}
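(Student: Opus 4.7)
The plan is to reduce both parts of the lemma to a single \emph{amalgamation claim}: if $\mathcal I$ is $\aleph_1$-complete, $f : \pom(H) \rightarrow \ord$, and $\{A_n : n < \omega\}$ is a countable family with $B := \bigcup_n A_n \in \mathcal I^+$ and $\rk_{\mathcal I+A_n}(f) \geq \alpha$ for every $n$ with $A_n \in \mathcal I^+$, then $\rk_{\mathcal I+B}(f) \geq \alpha$; and a normal analogue in which $B = \diagonalunion_{x \in H} A_x$ over an $H$-indexed family, assuming $\mathcal I$ is additionally normal. Granting these, both parts of the lemma are immediate: given $A_n$'s (respectively $A_x$'s) all in $\mathbf I[f, \mathcal I]$ with $B \in \mathcal I^+$, each $\mathcal I$-positive member of the family satisfies $\rk_{\mathcal I+A_n}(f) \geq \rk_{\mathcal I}(f)+1$, so applying the claim with $\alpha := \rk_{\mathcal I}(f)+1$ yields $\rk_{\mathcal I+B}(f) > \rk_{\mathcal I}(f)$, placing $B$ in $\mathbf I[f, \mathcal I]$. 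The remaining ideal axioms are routine: $\pom(H) \notin \mathbf I[f, \mathcal I]$ since $\mathcal I+\pom(H) = \mathcal I$; closure under subsets uses that $B \subseteq A$ in $\mathcal I^+$ implies $\mathcal I+A \subseteq \mathcal I+B$, hence $\rk_{\mathcal I+B}(f) \geq \rk_{\mathcal I+A}(f)$ by the monotonicity already noted before Fact~\ref{lemma31}; and fineness is inherited from $\mathcal I$.

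I would prove the amalgamation claim by induction on $\alpha$, with only the successor step $\alpha = \beta+1$ requiring work. For each $n$ with $A_n \in \mathcal I^+$ choose $g_n <_{\mathcal I+A_n} f$ with $\rk_{\mathcal I+A_n}(g_n) \geq \beta$, and stitch them into a single $g$ by setting $g := g_n$ on the pairwise disjoint pieces $B_n := A_n \setminus \bigcup_{k<n} A_k$ (and arbitrarily off $B$). The bad set $\{M \in B : g(M) \geq f(M)\}$ is the countable disjoint union over $n$ of subsets of $\{M \in A_n : g_n(M) \geq f(M)\}$, each of which lies in $\mathcal I$; by $\aleph_1$-completeness of $\mathcal I$ this shows $g <_{\mathcal I+B} f$. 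For each $n$ with $B_n \in \mathcal I^+$, $g$ agrees with $g_n$ on the $(\mathcal I+B_n)$-measure-one set $B_n$, so $\rk_{\mathcal I+B_n}(g) = \rk_{\mathcal I+B_n}(g_n) \geq \rk_{\mathcal I+A_n}(g_n) \geq \beta$, the inequality coming from $B_n \subseteq A_n$ and monotonicity. The inductive hypothesis applied to $g$ and $\{B_n\}$ then gives $\rk_{\mathcal I+B}(g) \geq \beta$, whence $\rk_{\mathcal I+B}(f) \geq \beta+1 = \alpha$.

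For the normal version, only the stitching needs to be redone using diagonal partitioning. Using the axiom of choice pick, for each $M \in B$, a witness $y(M) \in M$ with $M \in A_{y(M)}$; set $B_y := \{M \in B : y(M) = y\}$ and $g(M) := g_{y(M)}(M)$. Since $y(M) \in M$, each $B_y$ is contained in $A_y \cap \{M : y \in M\}$; this last containment is what allows me to identify the ordinary union $\bigcup_y B_y$ with $\diagonalunion_y B_y$, and in the same way rewrites $\{M \in B : g(M) \geq f(M)\}$ as $\diagonalunion_y\{M \in B_y : g_y(M) \geq f(M)\}$, a diagonal union of sets in $\mathcal I$. Normality of $\mathcal I$, replacing the $\aleph_1$-completeness used in the previous paragraph, then gives $g <_{\mathcal I+B} f$, and the inductive step concludes as before, this time applied to $g$ and the family $\{B_y\}$ (noting that $B_y \in \mathcal I^+$ forces $A_y \in \mathcal I^+$, so the witness $g_y$ was in fact chosen).

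The main issue I anticipate is precisely this shift from $\aleph_1$-completeness to normality in the stitching step: one has to be certain that both $B$ itself and the bad set can be expressed as diagonal rather than merely countable unions. This is exactly where fineness does real work, making $B_y \subseteq \{M : y \in M\}$ automatic, and it is the structural point that separates part (ii) from part (i). Everything else reduces to bookkeeping around the two principles already isolated in the paper: $\mathcal I \subseteq \mathcal J \Rightarrow \rk_\mathcal I \leq \rk_\mathcal J$, and that agreement on a measure-one set preserves rank.
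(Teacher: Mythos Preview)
Your proof is correct and follows essentially the same route as the paper: both parts reduce to an amalgamation claim proved by induction on the target rank, with the successor step handled by stitching witnesses $g_n$ (respectively $g_y$) on a disjoint refinement of the family and then invoking the inductive hypothesis on the stitched function. The paper's packaging differs only cosmetically---it pre-disjointifies before stating the claim, carries auxiliary functions $f_n \leq_{\mathcal I+B_n} f$ in the claim's hypothesis, and uses a fixed well-ordering of $H$ rather than a per-$M$ choice $y(M)$ for the diagonal refinement; one small slip to patch is that your $g(M) := g_{y(M)}(M)$ needs a default value when $A_{y(M)} \in \mathcal I$ (the set of such $M$ lies in $\mathcal I$ by normality, so this is harmless), and note that $B_y \subseteq \{M : y \in M\}$ holds by construction of $y(M)$ rather than by fineness.
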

\begin{proof} 
It is clear that $\mathbf I[f, \mathcal I] \supseteq \mathcal I$. Since $\pom(H) \in \mathcal I^*$, $\mathcal I+ \pom(H) = \mathcal I$, and so it is clear that $\pom(H) \notin \mathbf I[f, \mathcal I]$. 
For $A \s B$ both in $\mathcal I^+$, we have that $\rk_{\mathcal I+B}(f) \leq \rk_{\mathcal I+A}(f)$, so it follows that $\mathbf I[f, \mathcal I]$ is closed under taking subsets.
For $A \in \mathcal I^+$ and $B \in \mathcal I$, $\mathcal I+(A\cup B)= \mathcal I+A$. This makes it clear that for $A \in \mathbf I[f, \mathcal I]$ and $B \in \mathcal I$, $A \cup B \in \mathbf I[f, \mathcal I]$.

Given a sequence $\langle A_n \mid n< \omega\rangle$ of elements of $\mathbf I[f, \mathcal I]$, we can find a pairwise disjoint sequence $\langle B_n \mid n< \omega\rangle$ such that for every $n< \omega$, $B_n \s A_n$, and $\bigcup_{n< \omega}A_n = \bigcup_{n< \omega}B_n$. It may be that some of the $B_n$ are elements of $\mathcal I$ and some others not, but combined with the previous observations and the fact that $\mathcal I$ is $\aleph_1$-complete the following claim proves that $\mathbf I[f, \mathcal I]$ is an ideal and $\aleph_1$-complete, hence a fine ideal. 
\begin{claim} Let $\langle B_n\mid n< \omega\rangle$ be a sequence of pairwise disjoint elements of $\mathcal I^+$, and let $B:= \bigcup_{n< \omega}B_n$.  Let $\langle f_n\mid n< \omega \rangle$ be a sequence of functions in ${}^{\pom(H)} \ord$ such that for every $n< \omega$, $f_n \leq_{\mathcal I+B_n} f$.

Then 
\begin{enumerate}
\item  for every ordinal $\zeta$, if $\zeta \leq  \min\{\rk_{\mathcal I+B_n}(f_n) \mid n< \omega\}$, then $\zeta \leq \rk_{\mathcal I+B}(f)$;
\item $\rk_{\mathcal I+B}(f) \geq \min\{\rk_{\mathcal I+B_n}(f_n) \mid n< \omega\}$.
\end{enumerate}
\end{claim}
\begin{why}
It is clear that (i) implies (ii). We will prove (i) by induction on $\zeta$.

Since the rank is always at least $0$, it is clear for $\zeta= 0$, and it is also clear for $\zeta$ limit. Now, suppose that $\zeta= \eta+1$. Then by Fact~\ref{lemma31} there are $\langle g_n \mid n< \omega \rangle$ such that for every $n< \omega$, $g_n<_{\mathcal I+B_n} f_n$ and $\rk_{\mathcal I+B_n}(g_n) \geq \eta$. Let $g: \pom(H) \rightarrow \ord$ be a function such that for $n< \omega$, $g\restriction B_n= g_n \restriction B_n$. Applying the induction hypothesis to $\langle g_n \mid n< \omega\rangle$ and $g$ and $\eta$, we see that $\rk_{\mathcal I+B}(g) \geq \eta$. So, if we show that $g<_{\mathcal I+B} f$ then we will be done. Suppose that this is not so, so that 
$$\{M \in B \mid g(M) \geq f(M)\} \in \mathcal I^+.$$
As $\mathcal I$ is $\aleph_1$-complete, it follows that there is some $n< \omega$ such that 
$$\{M \in B_n \mid g(M) \geq f(M)\} \in \mathcal I^+.$$
This contradicts that 
$$g =_{\mathcal I +B_n} g_n<_{\mathcal I+B_n} f_n \leq_{\mathcal I +B_n} f,$$
and we finish.
\end{why}
We now wish to show that if $\mathcal I$ is normal, then so is $\mathbf I[f, \mathcal I]$. So, suppose that $\mathcal I$ is normal, and let $\langle A_x \mid x\in H\rangle$ be a sequence of elements of $\mathbf I[f, \mathcal I]$ and let $A:= \diagonalunion_{\!x \in H} A_x$. We wish to show that $A \in \mathbf I[f, \mathcal I]$ too. 

Fix $<^*$ a well-order of $H$. Now, by recursion along $<^*$, for $x \in H$ define a set $B_x \s A_x$ as follows: for $M \in \pom(H)$, $M \in B_x$ if and only if $x$ is the $<^*$-least element such that
\begin{itemize}
\item $M \in A_x$, and
\item $x \in M$.
\end{itemize}
Note that for $M \in \pom(H)$, $M \in \bigcup_{x \in M}A_x$ if and only if $M \in \bigcup_{x \in M}B_x$. In other words, 
$$\diagonalunion_{\!x \in H}B_x = \diagonalunion_{\!x \in H}A_x = A,$$
and for every $M \in \pom(H)$, $\{x \in M \mid M \in B_x\}$ has cardinality at most $1$.

Furthermore, for every $x \in H$, $B_x \s A_x$, and as $\mathbf I[f, \mathcal I]$ is closed under taking subsets we have that for every $x \in H$, $B_x \in \mathbf I[f, \mathcal I]$ too. However, some of the $B_x$ may be in $\mathcal I$ and some others not. That motivates the following claim which combined with previous observations proves that $\mathbf I[f, \mathcal I]$ is normal.

\begin{claim} Let $H' \s H$ and let $\langle B_x \mid x \in H'\rangle$ be a sequence of elements of $\mathcal I^+$ such that for every $M \in \pom(H)$, $\{x \in M\cap H' \mid M \in B_x\}$ has cardinality at most $1$. Let $B:=\diagonalunion_{\!x \in H'}B_x$. Let $\langle f_x \mid x \in H'\rangle$ be a sequence of functions in ${}^{\pom(H)} \ord$ such that for every $x \in H'$, $f_x \leq_{\mathcal I+B_x} f$.

Then 
\begin{enumerate}
\item  for every ordinal $\zeta$, if $\zeta \leq  \min\{\rk_{\mathcal I+B_x}(f_x) \mid x\in H'\}$, then $\zeta \leq \rk_{\mathcal I +B}(f)$;
\item $\rk_{\mathcal I+B}(f) \geq \min\{\rk_{\mathcal I+B_x}(f_x) \mid x\in H'\}$.
\end{enumerate}
\end{claim}
\begin{why} It is clear that (i) implies (ii). We will prove (i) by induction on $\zeta$. 

 Since the rank is always at least $0$, it is clear for $\zeta= 0$, and it is also clear for $\zeta$ limit. Now, suppose that $\zeta= \eta+1$. Then by Fact~\ref{lemma31} there are $\langle g_x \mid x \in H' \rangle$ such that for every $x\in H'$, $g_x<_{\mathcal I+B_x} f_x$ and $\rk_{\mathcal I +B_x}(g_x) \geq \eta$. 

Let $g:\pom(H)\rightarrow \ord$ be a function such that for every $M \in \pom(H)$, if there is a (necessarily unique) $x \in M \cap H'$ such that $M \in B_x$, then $g(M) = g_x(M)$. We first observe that for $x \in H'$, $g =_{\mathcal I+ B_x} g_x$. Indeed, for $x \in H'$, as $B_x \in \mathcal I^+$ and $\mathcal I$ is a fine ideal, $\{M \in B_x \mid x \notin M\} \in \mathcal I+B_x$.

Now, applying the induction hypothesis to $\langle g_x \mid x \in H'\rangle$ and $g$ and $\eta$, we see that $\rk_{\mathcal I+B}(g) \geq \eta$. So, if we show that $g<_{\mathcal I+B} f$ then we will be done. 
So, let 
$$B':= \{M \in B \mid g(M) \geq f(M)\},$$
and suppose towards a contradiction that $B' \in \mathcal I^+$. So, let $\Phi: B' \rightarrow H$ be the choice function defined via: for $M \in B'$, $\Phi(M)$ is the unique $x \in M \cap H'$ such that $M\in B_{x}$. Then $\Phi$ takes a constant value (which we will denote by $x$, which is an element of $H'$) on a set in $\mathcal I^+$. So, the set $B' \cap B_{x} \in \mathcal I^+$, which means that $g \not<_{\mathcal I +B_x} f$

However, we also have that 
$$g =_{\mathcal I +B_x} g_x<_{\mathcal I+B_x} f_x \leq_{\mathcal I+B_x}f,$$
which means that we have reached a contradiction.
\end{why}
With that the proof is completed.
\end{proof}

\section{Start of  the proof}\label{sectionstart}
We start the proof of Theorem~D in this section and mainly fix a number of objects which we shall need in our proof. We start by explaining the role of the topological hypotheses. The objects are different, but the ideas are imported from Raghavan and Todor{\v c}evi{\'c}, \cite{MR4190059}, to this context. 

The role of left-separation is explained by the following result of Fleissner.
\begin{theorem}[Fleissner, \cite{MR825729}] \label{fleissnertheorem}
Let $(X, \tau)$ be a regular space with point-countable basis. Then $(X, \tau)$ is not left-separated if and only if $\{M \in \pom(X)  \mid\overline{M}\setminus M \neq \emptyset\}$ is stationary in $\pom(X)$.\footnote{Note that the result is true even for merely $T_1$ spaces instead of regular Hasudorff, however our extra hypotheses are for the purpose of finding homeomorphic copies of $\mathbb Q$. See the use of Urysohn's Metrisation Theorem in Theorem~\ref{RaghavanTodorcevicconstruction}.} 
\end{theorem}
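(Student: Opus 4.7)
My plan is to prove both directions of the biconditional. The direction $(\Leftarrow)$, that left-separation of $X$ yields non-stationarity of $S := \{M \in \pom(X) : \overline{M} \setminus M \neq \emptyset\}$, is a soft observation via an explicit club. The direction $(\Rightarrow)$, that non-stationarity of $S$ yields a left-separation, is the substantive content and will be built by a continuous-chain construction using both the club and the point-countable base.

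For $(\Leftarrow)$ I would fix the left-separating well-order $<^*$ on $X$ and take
\[ C := \{M \in \pom(X) : M = \{y \in X : y <^* x\} \text{ for some } x \in X\}. \]
Cofinality of $C$ follows by taking $x$ past the $<^*$-supremum of any given countable set (which is possible when $\mathrm{otp}(<^*) \geq \omega_1$; otherwise $X$ is countable and the claim is trivial); closure under countable $\s$-chains follows since a countable increasing union of countable initial segments of $<^*$ is again a countable initial segment, with the new bound being the $<^*$-supremum of the previous bounds. Each $M \in C$ is closed by the defining property of $<^*$, so $C \cap S = \emptyset$ and $S$ is non-stationary.

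For $(\Rightarrow)$ I would fix a club $C$ of closed countable subsets of $X$ and construct a continuous $\s$-increasing chain $\langle F_\alpha : \alpha \leq \kappa\rangle$ of closed subsets of $X$ with $F_0 = \emptyset$, $F_\kappa = X$, and $|F_{\alpha+1} \setminus F_\alpha| \leq \aleph_0$ at every successor stage. Concatenating $\omega$-enumerations of each block $F_{\alpha+1} \setminus F_\alpha$ would then yield a well-order $<^*$ of $X$ whose proper initial segments all have the form $F_\alpha \cup \{\text{finite}\}$, closed since $X$ is $T_1$ (the closure of a finite union is the union of closures, and singletons are closed). Closedness of $F_\alpha$ at limits I would split into two cases: at limits $\alpha$ of countable cofinality for which $F_\alpha$ is countable, the club property of $C$ (closure under countable $\s$-chains) forces $F_\alpha \in C$, hence closed; at limits $\alpha$ of uncountable cofinality, the point-countable base delivers closedness via the standard argument—for $x \in \overline{F_\alpha}$, using that $\mathcal{B}_x$ is countable, for each $U \in \mathcal{B}_x$ let $\beta(U) < \alpha$ be least with $U \cap F_{\beta(U)} \neq \emptyset$; the countably many $\beta(U)$ are bounded below $\alpha$ by uncountable cofinality, placing $x \in \overline{F_\gamma} = F_\gamma \s F_\alpha$ for $\gamma := \sup_U \beta(U)$.

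The main obstacle, and where the proof requires the most care, is the residual case of limits of countable cofinality at which $F_\alpha$ has already become uncountable; neither tool above applies, because $C$ consists of countable sets and the point-countable base argument fails at cofinality $\omega$. My plan to circumvent this is a two-layer construction: first use cases (i) and (ii) to build an $\omega_1$-indexed inner chain of countable closed sets in $C$ whose union $F^1$ is closed of size $\aleph_1$; then pass to the open subspace $X \setminus F^1$, verify that it inherits regularity, a point-countable base, and (the key point) the non-stationarity hypothesis—a countable $M \s X \setminus F^1$ is closed in the subspace iff $\overline{M}^X \s M \cup F^1$, and one shows that the original club $C$ induces a club of such sets in $\pom(X \setminus F^1)$ because $F^1$ is closed—and iterate this construction transfinitely. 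Concatenating the left-separations produced at each layer gives a left-separation of $X$. The delicate verification is that this layered iteration structurally avoids the problematic countable-cofinality limits while still exhausting $X$, which should follow from a careful transfinite bookkeeping on the cardinality/weight of the pieces removed.
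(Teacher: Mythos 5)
Since the paper cites Fleissner's result rather than proving it, there is no paper proof to compare against; I'll assess your argument on its own terms. Both directions have genuine gaps.

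For the direction $(\Leftarrow)$, your club $C = \{\,\{y : y <^* x\} : x \in X\,\} \cap \pom(X)$ is the set of \emph{countable} $<^*$-initial segments, and this is not cofinal in $\pom(X)$ once $|X| > \aleph_1$: if $\otp(<^*) > \omega_1$ (which you cannot avoid when $|X| \geq \aleph_2$), then a countable $N$ containing a point not among the first $\omega_1$ elements of $<^*$ lies in no countable initial segment. Your parenthetical remark addresses only $\otp(<^*) < \omega_1$, not $\otp(<^*) > \omega_1$. More tellingly, this direction genuinely needs the point-countable base, which your argument never invokes. Consider $\omega_2$ with the order topology: it is regular, $T_1$, and left-separated (by initial segments), yet $\{M \in \pom(\omega_2) : \overline{M} = M\}$ contains no club. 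Indeed, given any club $C$ one may build $M_0 \subseteq M_1 \subseteq \cdots$ in $C$ with $\sup M_n$ strictly increasing; then $\bigcup_n M_n \in C$ by closure of $C$, but $\sup_n(\sup M_n)$ is a limit of countable cofinality lying in $\overline{\bigcup_n M_n}\setminus \bigcup_n M_n$, so $\bigcup_n M_n$ is not closed. Of course $\omega_2$ lacks a point-countable base, so this is not a counterexample to Fleissner's theorem, but it shows that a correct proof of $(\Leftarrow)$ must use the base. One working route: use left-separation to choose $U_x \in \mathcal B$ with $x \in U_x \subseteq \{y : y \geq^* x\}$, so that $x = \min_{<^*} U_x$; then the set of $M \in \pom(X)$ closed under $y \mapsto \{\min_{<^*} U : U \in \mathcal B_{\{y\}}\}$ is a club, and every $M$ in it is closed (if $z \in \overline{M}$ then $U_z$ meets $M$ in some $y$, so $U_z \in \mathcal B_{\{y\}}$ and $z = \min_{<^*} U_z \in M$).

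For the direction $(\Rightarrow)$, cases (i), (ii), (iii) of your chain construction are fine, but the proposed remedy for the residual case — uncountable $F_\alpha$ at a limit $\alpha$ of countable cofinality — does not work as described. After $\omega$ layers, each of length $\omega_1$, the accumulated stage is $\omega_1 \cdot \omega$, which again has cofinality $\omega$ and has accumulated $\aleph_1$-many points: precisely the situation that neither the club (too big) nor the point-countable-base argument (wrong cofinality) handles. The obstruction recurs verbatim one level up in the layer index, so the ``careful transfinite bookkeeping'' you defer to is not a routine verification but the entire difficulty of the theorem. Nothing in your two-layer scheme explains why, say, $\bigcup_{n<\omega} L_n$ should be closed in $X$ when each $L_n$ is closed only relative to $X \setminus (L_0 \cup \dots \cup L_{n-1})$; passing to an open subspace does not help at the limit. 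This is a missing idea, not a bookkeeping detail.
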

We will also need the following consequence of a point-countable basis of a topological space.
\begin{lemma} \label{lemma42}Let $(X, \tau)$ be a topological space with $\mathcal B \s \tau$ a point-countable basis. Let $\Omega$ be an uncountable regular cardinal such that $X, \tau, \mathcal B \in H(\Omega)$. Let $M \prec H(\Omega)$ be a countable elementary submodel with $X, \tau, \mathcal B \in M$.

If $x \in \overline{M \cap X}$, then $\mathcal B_{\{x\}} \s M$.
\end{lemma}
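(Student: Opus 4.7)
The plan is to argue by contradiction, so suppose there exists $U \in \mathcal B$ with $x \in U$ but $U \notin M$. Since $x \in \overline{M \cap X}$ and $U$ is an open neighbourhood of $x$, there must exist some $y \in U \cap M \cap X$. The goal is to derive $U \in M$ from the hypothesis $y \in M$ together with the point-countability of $\mathcal B$.

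The key ingredient is the following standard fact about countable elementary submodels of $H(\Omega)$: if $A \in M$ is countable in $V$, then $A \s M$. Indeed, $A$ countable is expressible in $H(\Omega)$ by ``there is a surjection $g : \omega \to A$'' (noting $A \in H(\Omega)$ since $\Omega$ is uncountable regular), so by elementarity $M$ contains such a $g$. Each natural number is definable without parameters, hence $\omega \s M$, and since $g \in M$, elementarity gives $g(n) \in M$ for every $n < \omega$, yielding $A = g``[\omega] \s M$.

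Now consider $\mathcal B_y := \{V \in \mathcal B \mid y \in V\}$. Since $y \in M$ and $\mathcal B \in M$, and $\mathcal B_y$ is defined from these parameters by a $\Delta_0$ formula, $\mathcal B_y \in M$ by elementarity. By the point-countability of $\mathcal B$, the set $\mathcal B_y$ is countable. Applying the observation in the previous paragraph, $\mathcal B_y \s M$. But $U \in \mathcal B$ and $y \in U$, so $U \in \mathcal B_y \s M$, contradicting our choice of $U$.

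I do not anticipate any serious obstacle: the argument is a routine application of elementarity, and the only mildly non-trivial point is the lemma that countable members of $M$ are contained in $M$, which is standard. The role of the hypothesis $x \in \overline{M \cap X}$ is precisely to produce the witness $y \in U \cap M$, which is what lets point-countability get traction via a basic open $\mathcal B_y$ that is simultaneously countable and in $M$.
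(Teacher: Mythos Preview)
Your proof is correct and follows essentially the same argument as the paper: pick $U \in \mathcal B_{\{x\}}$, use $x \in \overline{M \cap X}$ to find $y \in U \cap M \cap X$, and then observe that $\mathcal B_{\{y\}}$ is countable and belongs to $M$, hence is contained in $M$. The only difference is cosmetic: you wrap the argument as a proof by contradiction, whereas the paper proceeds directly, and you spell out in more detail the standard facts about elementarity that the paper leaves implicit.
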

\begin{proof} Let $U \in \mathcal B$ be such that $x \in U$. As $x \in  \overline{M \cap X}$, there is a $y \in M \cap X$ such that $y \in U$. Now as $\mathcal B_{\{y\}}$ is countable, $\mathcal B_{\{y\}} \s M$, so that $U \in M$. 
\end{proof}

We now start the proof of Theorem~D. First we fix some objects which we will use throughout the paper. 
\begin{itemize}
\item Let $(X, \tau)$ be a regular topological space which is not left-separated and let $\mathcal B \s \tau$ be a point-countable basis for it. 
\item Let $\Omega$ be an uncountable regular cardinal such that $X, \tau, \mathcal B \in H(\Omega)$.
\item Fix $\langle U_{x,k}\mid x \in X, k< \omega\rangle$ such that for every $x \in X$, $\langle U_{x, k} \mid k< \omega\rangle$ enumerates $\mathcal B_{\{x\}}$.
\end{itemize}
By Theorem~\ref{fleissnertheorem} there is $\Lambda_0 \s \pom(X)$ a stationary set such that for every $M\in \Lambda_0$, $\overline{M} \setminus M \neq \emptyset$.

The set of countable elementary $M \prec H(\Omega)$ containing $X, \tau, \mathcal B$ is club in $\pom(H(\Omega))$. By Fact~\ref{lemma32}(i), the set 
$$\{M \in \pom(H(\Omega)) \mid X, \tau, \mathcal B \in M,\, M \cap X \in \Lambda_0\}$$
is stationary in $\pom(H(\Omega))$. So fix $\Lambda_1 \s \pom(H(\Omega))$ a stationary set such that $M \in \Lambda_1$ implies that 
\begin{itemize}
\item $M \prec  H(\Omega)$, $|M|=\aleph_0$, and $X, \tau, \mathcal B\in M$, and 
\item $M \cap X \in \Lambda_0$ so that $\overline{M \cap X} \setminus M\neq \emptyset$.
\end{itemize}
Fix also a function $F:\Lambda_1 \rightarrow X$ such that for every $M \in \Lambda_1$, $F(M) \in \overline{M \cap X} \setminus M$. 

\begin{definition}\label{defC1}
The set $\mathcal C_1$ consists of pairs $(A, \mathcal I)$ where 
\begin{itemize}
\item $\mathcal I$ is a fine normal ideal on $\pom(H(\Omega))$, and 
\item $A \s \Lambda_1$, and $A \in \mathcal I^+$.
\end{itemize}
Given $(A, \mathcal I)$ and $(B, \mathcal J)$ from $\mathcal C_1$, we let $(B, \mathcal J) \leq_1 (A, \mathcal I)$ denote that 
\begin{itemize}
\item $B \s A$, and
\item $\mathcal J \supseteq \mathcal I$.
\end{itemize}
\end{definition}
It is clear that $\leq_1$ partially orders $\mathcal C_1$. It is also clear that $\mathcal C_1$ is non-empty since taking $\mathcal I$ to be the ideal of non-stationary subsets of $\pom(H(\Omega))$, $(\Lambda_1, \mathcal I) \in \mathcal C_1$.
The requirement that $A \s \Lambda_1$ for every $(A, \mathcal I) \in \mathcal C_1$ is needed in the following lemma, which we will need in the proof of the main lemma, Lemma~\ref{Shelahidea}.
\begin{lemma} \label{lemma44}Let $(A, \mathcal I) \in \mathcal C_1$ and let $k< \omega$. Then there is a $(B, \mathcal I)\leq_1 (A, \mathcal I)$ and $U \in \mathcal B$ such that for every $M \in B$, $U_{F(M), k} = U$.
\end{lemma}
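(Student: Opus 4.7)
The plan is to view this as a direct consequence of the pressing-down property for fine normal ideals (Fact~\ref{lemmaidealfacts}(iii)) applied to the map $M \mapsto U_{F(M),k}$, once we verify that this is genuinely a choice function.

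First, I would fix $(A,\mathcal I) \in \mathcal C_1$ and $k<\omega$ as in the hypothesis, and observe that for every $M \in A \subseteq \Lambda_1$ we have $M \prec H(\Omega)$ with $X,\tau,\mathcal B \in M$, and moreover $F(M) \in \overline{M \cap X} \setminus M$ by the choice of $F$ and $\Lambda_1$. In particular $F(M) \in \overline{M \cap X}$, so Lemma~\ref{lemma42} applies and yields $\mathcal B_{\{F(M)\}} \subseteq M$. Since $U_{F(M),k} \in \mathcal B_{\{F(M)\}}$ by the choice of the enumeration $\langle U_{x,k} \mid x \in X, k<\omega\rangle$, we conclude that $U_{F(M),k} \in M$.

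Next, I would define $\Phi : A \to H(\Omega)$ by $\Phi(M) := U_{F(M),k}$. The previous paragraph shows $\Phi(M) \in M$, and since $\mathcal B \in H(\Omega)$ we have $\Phi(M) \in \mathcal B \subseteq H(\Omega)$, so $\Phi$ is a well-defined choice function on $A$ in the sense of Section~\ref{sectionideals}. Since $\mathcal I$ is a fine normal ideal and $A \in \mathcal I^+$, Fact~\ref{lemmaidealfacts}(iii) produces a set $B \subseteq A$ with $B \in \mathcal I^+$ and a single element $U \in H(\Omega)$ such that $\Phi``[B] = \{U\}$; that is, $U_{F(M),k} = U$ for all $M \in B$. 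Note that $U \in \mathcal B$ automatically, since $U = U_{F(M),k} \in \mathcal B_{\{F(M)\}} \subseteq \mathcal B$ for any single $M \in B$.

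Finally, to verify $(B,\mathcal I) \leq_1 (A,\mathcal I)$, I would check the conditions of Definition~\ref{defC1}: we need $B \subseteq \Lambda_1$ with $B \in \mathcal I^+$ (both hold since $B \subseteq A \subseteq \Lambda_1$ and $B$ was produced $\mathcal I$-positive), and $\mathcal I \supseteq \mathcal I$ trivially. There is no real obstacle here — the lemma is essentially the statement that fineness and normality of $\mathcal I$ combine to let us stabilise the countably many coordinates of the enumeration at $F(M)$ on an $\mathcal I$-positive subset of $A$. The only subtle point is remembering that the values $U_{F(M),k}$ do belong to $M$, which is exactly what $\Lambda_1$ was designed to guarantee via Lemma~\ref{lemma42}.
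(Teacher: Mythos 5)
Your proof is correct and follows essentially the same argument as the paper: define $\Phi(M) := U_{F(M),k}$, use Lemma~\ref{lemma42} together with the choice of $F$ and $\Lambda_1$ to show $\Phi(M) \in M$ (hence $\Phi$ is a choice function), and then apply Fact~\ref{lemmaidealfacts}(iii) for the fine normal ideal $\mathcal I$ to stabilise on an $\mathcal I$-positive $B \subseteq A$. The extra detail you supply about why $U_{F(M),k} \in M$ and why $(B,\mathcal I)\leq_1 (A,\mathcal I)$ is just a fleshing-out of what the paper states in a single sentence.
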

\begin{proof} Let $\Phi: A \rightarrow \mathcal B$ be the function $M \mapsto U_{F(M), k}$. The nature of $F$ and Lemma~\ref{lemma42} ensure that $\Phi$ is a choice function on $A$, and of course $A \in \mathcal I^+$ and $\mathcal I$ is a normal ideal. So let $B \s A$ in $\mathcal I^+$ and $U \in \mathcal B$ be such that for every $M \in B$, $\Phi(M) = U$. 
\end{proof}
The requirement that $F(M) \notin M$ for $M \in \Lambda_1$ is needed for the following, which in turn will be needed in Lemma~\ref{weaksatij}.
\begin{lemma} \label{lemma45}Let $(A, \mathcal I) \in \mathcal C_1$ and let $x \in X$. 

Then $\{M \in A \mid F(M)= x\}\in \mathcal I$.
\end{lemma}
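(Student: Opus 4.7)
The plan is to exploit two facts simultaneously: first, that by construction of $\Lambda_1$ and $F$, for every $M \in \Lambda_1$ we have $F(M) \in \overline{M\cap X}\setminus M$, so in particular $F(M)\notin M$; second, that $\mathcal I$ is a \emph{fine} ideal on $\pom(H(\Omega))$. The fineness hypothesis applies to elements of $H(\Omega)$, and we need $x \in H(\Omega)$, which follows from the fact that $\Omega$ is an uncountable regular cardinal (so $H(\Omega)$ is transitive) together with $X \in H(\Omega)$, hence $x \in X \subseteq H(\Omega)$.

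Concretely, first I would observe that since $A \subseteq \Lambda_1$, for every $M \in A$ we have $F(M)\notin M$. Therefore, if $F(M)=x$, then $x \notin M$. This yields the inclusion
\[
\{M\in A \mid F(M)=x\} \;\subseteq\; \{M\in \pom(H(\Omega)) \mid x\notin M\}.
\]
Second, I would apply fineness of $\mathcal I$ at the point $x \in H(\Omega)$, which gives $\{M\in \pom(H(\Omega)) \mid x\notin M\}\in \mathcal I$. Finally, since $\mathcal I$ is closed under subsets, the displayed inclusion yields $\{M\in A \mid F(M)=x\}\in \mathcal I$, as desired.

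There is no real obstacle here; the lemma is essentially a tautological consequence of the two design choices built into Definition~\ref{defC1} and the construction of $F$: the ideal is forced to be fine precisely so that points outside $M$ can be discarded modulo $\mathcal I$, and $F$ is forced to take values outside $M$ precisely so this discarding argument applies to the function $F$. The only thing one has to be slightly careful about is to confirm $x \in H(\Omega)$ before invoking fineness, which as noted is immediate from transitivity of $H(\Omega)$ and $X \in H(\Omega)$.
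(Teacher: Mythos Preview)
Your proof is correct and uses the same idea as the paper's: both exploit that $F(M)\notin M$ for $M\in A\subseteq\Lambda_1$ together with fineness of $\mathcal I$ at the point $x$. The paper phrases it as a proof by contradiction (assuming the set is $\mathcal I^+$ and deriving that $\{M\in B\mid x\in M\}$ is both $\mathcal I^+$ and empty), whereas you give the equivalent direct argument via the inclusion into $\{M\mid x\notin M\}$.
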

\begin{proof} Let $B:= \{M \in A \mid F(M)= x\}$ and suppose towards a contradiction that $B \in \mathcal I^+$. As $\mathcal I$ is a fine ideal, $\{M \in B \mid x \in M\}\in \mathcal I^+$. But if $M \in B$ then $F(M) =x$ so that $x \in \overline{M\cap X} \setminus M$ whereas also $x \in M$, a contradiction.
\end{proof}

\section{The construction of Raghavan and Todor{\v c}evi{\' c}}\label{sectionRT}
We recall some objects we have fixed in the previous section.
\begin{itemize}
\item $(X, \tau)$ is a regular topological space which is not left-separated and has a point-countable basis $\mathcal B$. 
\item $\Omega$ is an uncountable regular cardinal such that $X, \tau, \mathcal B \in H(\Omega)$.
\item $\langle U_{x,k}\mid x \in X, k< \omega\rangle$ is such that for every $x \in X$, $\langle U_{x, k} \mid k< \omega\rangle$ enumerates $\mathcal B_{\{x\}}$.
\item  $\Lambda_1 \s \pom(H(\Omega))$ is a fixed stationary subset of $\pom(H(\Omega))$ such that for every $M \in \Lambda_1$, $M \prec H(\Omega)$ and $X, \tau, \mathcal B \in M$, and $\overline{M \cap X} \setminus M \neq \emptyset$. 
\item $F: \Lambda_1 \rightarrow X$ is such that for every $M \in \Lambda_1$, $F(M) \in \overline{M \cap X} \setminus M$.
\end{itemize}
Additionally, fix $K$ a positive natural number and $c:[X]^2\rightarrow K$ a colouring of pairs from $X$. Also, let $\mathbb I$ be the collection of fine normal ideals on $\pom(H(\Omega))$.

The main result of this section is Theorem~\ref{RaghavanTodorcevicconstruction} which gives a sufficient condition for the existence of a subset of $X$ homeomorphic to $\mathbb Q$ on which $c$ takes at most $2$ colours. Again, the ideas here are all from Raghavan and Todor{\v c}evi{\'c}'s \cite{MR4190059} only the objects are different.

Recall that $\mathcal C_1$ has been defined earlier in Definition~\ref{defC1}. The following is a variant.
\begin{definition} \label{defC2}Let $\mathcal C_2$ denote the set of all triples $(A, B, \mathcal I)$ where 
\begin{itemize}
\item $\mathcal I \in \mathbb I$ and 
\item $A, B \s \Lambda_1$ and $A, B \in \mathcal I^+$. 
\end{itemize}
We define a relation $\leq_2$ on $\mathcal C_2$ as follows: for $(A, B, \mathcal I), (C, D, \mathcal J) \in \mathcal C_2$, 
$$(C, D, \mathcal J) \leq_2 (A, B, \mathcal I) \text{ if } C \subseteq A\ \&\ D \subseteq B\ \&\ \mathcal J\supseteq \mathcal I.$$
\end{definition}
Just as $\leq_1$ partially orders $\mathcal C_1$,  $\leq_2$ partially orders $\mathcal C_2$. And just as $\mathcal C_1$ is non-empty, $\mathcal C_2$ is also non-empty.

We remind the reader of our convention that if $c(x,y)$ is defined, that is, has a specific value, then we implicitly assume that also $x \neq y$.
\begin{definition}\label{defnweaksaturation} Let $(A, B, \mathcal I) \in \mathcal C_2$ and let $(i,j) \in K^2$. 
\begin{enumerate}
\item We say that $(A, B)$ is \emph{weakly $(i,j)$-saturated over $\mathcal I$} if 
$$\exists^{\mathcal I} M\in A\exists^{\mathcal I}N \in B\, [c(F(M),F(N))=i]$$
and 
$$\exists^{\mathcal I} N \in B \exists^{\mathcal I} M \in A \,[c(F(M),F(N)) = j].$$

    \item We say that $(A, B)$ is \emph{$(i, j)$-saturated over $\mathcal I$} if for every $C \s A$ and every $D \s B$ with $C, D \in \mathcal I^+$,  $(C,D)$ is weakly $(i,j)$-saturated over $\mathcal I$.
\end{enumerate}
\end{definition}
The following are the only properties of these notions that we will need in this section. In Secion~\ref{sectionstartingpoint} we will study them in more detail.
\begin{lemma}\label{firstbasicproperties} Let $(A, B, \mathcal I) \in \mathcal C_2$ and let $(i,j) \in K^2$. Suppose that $(A, B)$ is $(i,j)$-saturated over $\mathcal I$.
\begin{enumerate}
\item Then, 
$$\forall^{\mathcal I} M\in A\exists^{\mathcal I}N\in B\, [c(F(M),F(N))=i]$$
and 
$$\forall^{\mathcal I} N \in B \exists^{\mathcal I} M \in A \,[c(F(M),F(N)) = j].$$
\item For every $C \s A$ and $D \s B$ with $C, D \in \mathcal I^+$, also $(C, D)$ is $(i,j)$-saturated over $\mathcal I$.
\end{enumerate}
\end{lemma}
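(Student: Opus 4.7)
The plan is to observe that part (ii) is essentially immediate from the definition, and then deduce part (i) by applying the $(i,j)$-saturation hypothesis to a carefully chosen subpair obtained by negating the desired conclusion.

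For (ii), I would simply unfold the definitions: given $C \s A$ and $D \s B$ with $C, D \in \mathcal I^+$, any $C' \s C$ and $D' \s D$ with $C', D' \in \mathcal I^+$ are also subsets of $A$ and $B$ respectively, and lie in $\mathcal I^+$. The $(i,j)$-saturation of $(A,B)$ over $\mathcal I$ then delivers that $(C', D')$ is weakly $(i,j)$-saturated over $\mathcal I$, which is exactly what is needed to conclude that $(C, D)$ is $(i,j)$-saturated over $\mathcal I$.

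For (i), I only treat the first of the two claims, as the second is symmetric. Suppose towards a contradiction that it fails, so that the set
$$C := \{M \in A \mid \neg \exists^{\mathcal I} N \in B\,[c(F(M),F(N)) = i]\}$$
is $(\mathcal I + A)$-positive; since $C \s A$, this just means $C \in \mathcal I^+$. Now apply the $(i,j)$-saturation hypothesis to the pair $(C, B)$, which lies in $\mathcal I^+ \times \mathcal I^+$ with $C \s A$ and $B \s B$. This yields that $(C, B)$ is weakly $(i,j)$-saturated over $\mathcal I$, and in particular there exists some $M \in C$ witnessing $\exists^{\mathcal I} N \in B\,[c(F(M),F(N)) = i]$. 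But this directly contradicts the defining property of $C$.

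Neither step presents a real obstacle; the content of the lemma is essentially bookkeeping about the quantifiers $\exists^{\mathcal I}$ and $\forall^{\mathcal I}$ and the closure of $(i,j)$-saturation under shrinking. The only point that requires a small amount of care is keeping track of the relativised ideals $\mathcal I + A$, $\mathcal I + B$, etc., and noting that for $C \s A$ with $C \in \mathcal I^+$ the notions $C \in (\mathcal I+A)^+$ and $C \in \mathcal I^+$ coincide.
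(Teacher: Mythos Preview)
Your proposal is correct and follows essentially the same approach as the paper: for (i) you negate the conclusion to obtain an $\mathcal I$-positive set $C \subseteq A$, apply the saturation hypothesis to the pair $(C,B)$ to get weak $(i,j)$-saturation, and derive a contradiction from the defining property of $C$; for (ii) you unfold the definition, which the paper simply marks as ``Easy.''
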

\begin{proof}
\begin{enumerate}
\item Suppose that the conclusion does not hold. Since the other case is symmetric, suppose that this is because it is not the case that 
$$\forall^{\mathcal I} M \in A \exists^{\mathcal I}N \in B\, [c(F(M),F(N)) = i].$$
So, let $C:=\{M \in A\mid \neg (\exists^{\mathcal I}N\in B\, [c(F(M),F(N)) = i])\}$, and then $C \in \mathcal I^+$. Now we have that $(C, B)$ is weakly $(i,j)$-saturated over $\mathcal I$. However, clearly 
$$\{M \in C \mid \exists^{\mathcal I}N \in B\, [c(F(M),F(N)) = i]\} = \emptyset,$$
 and so we reach a contradiction.
\item Easy.
\end{enumerate}
So the lemma is proved.
\end{proof}
Compare the following with \cite[Lemma~45]{MR4190059}.
\begin{definition} \label{winnerdefinition} Let $(A, \mathcal I) \in \mathcal C_1$.
\begin{enumerate}
\item Let $B \in \mathcal I^+$ with $B \s A$. Let $x \in X$ and  $(i,j) \in K^2$. We say that $x$ is an \emph{$(i, j)$-winner below $B$ over $\mathcal I$} if there is an $M \in B$ with $F(M) =x$ and there is a sequence $\langle T_k \mid k< \omega\rangle$ such that 
\begin{itemize}
\item for every $k<  \omega$, $T_k \s B$ and $T_k \in \mathcal I^+$;
    \item for every $N \in \bigcup_{k< \omega}T_k$, $F(N)\neq x$ and $c(x, F(N)) = i$;
    \item for every $k< l < \omega$, $(T_l, T_k)$ is $(i, j)$-saturated over $\mathcal I$;
    \item for every $k< \omega$, for every $N \in T_k$, $F(N) \in U_{x,k}$.
\end{itemize}
\item For $(i,j)\in K^2$, we say that $(A, \mathcal I)$ is an \emph{$(i, j)$-winning pair} if for every $B \s A$ with $B \in \mathcal I^+$, there is an $(i,j)$-winner below $B$ over $\mathcal I$.  
\item We say that $(A, \mathcal I)$ is a \emph{winning pair for $c$} if $(A,\mathcal I)$ is an $(i, j)$-winning pair for some $(i, j)\in K^2$.
\end{enumerate}

\end{definition}

\begin{theorem}[Raghavan-Todor{\v c}evi{\' c}]
\label{RaghavanTodorcevicconstruction}
Let $(A, \mathcal I) \in \mathcal C_1$ and suppose that $(A, \mathcal I)$ is a winning pair for $c$.

Then, there is a $Y \s X$ which is homeomorphic to $\mathbb Q$ and such that $|c``[Y]^2|\leq 2$.  
\end{theorem}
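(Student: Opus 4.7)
The plan is to build, by recursion on $\sigma \in {}^{<\omega}\omega$, a tree of points $\{x_\sigma \mid \sigma \in {}^{<\omega}\omega\} \s X$ together with witnesses $N_\sigma \in \Lambda_1$ satisfying $F(N_\sigma) = x_\sigma$ and carrying the data of an $(i,j)$-winner, arranged so that (i) $x_{\sigma^\frown k} \in U_{x_\sigma, k}$ for every $\sigma$ and every $k<\omega$, and (ii) $c(x_\sigma, x_\tau) \in \{i, j\}$ for all distinct $\sigma, \tau \in {}^{<\omega}\omega$. Then $Y := \{x_\sigma \mid \sigma \in {}^{<\omega}\omega\}$ will be the desired copy of $\mathbb Q$: (ii) forces distinctness of the $x_\sigma$ (by the convention that $c$ is defined only on pairs of distinct elements) and gives the colour bound $|c``[Y]^2|\leq 2$, while (i) combined with $\langle U_{x_\sigma, k} \mid k<\omega\rangle$ enumerating a neighbourhood basis at $x_\sigma$ makes $Y$ dense-in-itself. $Y$ inherits regularity and Hausdorffness from $X$, and $\mathcal B_Y = \bigcup_{y \in Y} \mathcal B_{\{y\}}$ is a countable basis for $Y$ by point-countability of $\mathcal B$ together with countability of $Y$; hence $Y$ is metrisable by the Urysohn Metrisation Theorem, and $Y \cong \mathbb Q$ by Sierpi\'nski's theorem on countable dense-in-themselves metric spaces.

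The recursion begins by applying the winning pair hypothesis to $A$ itself to obtain $x_{\langle\rangle}$ as an $(i,j)$-winner below $A$, with witnesses $\langle T^{\langle\rangle}_k \mid k<\omega\rangle$ and some $N_{\langle\rangle} \in A$ satisfying $F(N_{\langle\rangle}) = x_{\langle\rangle}$. At a successor step $\sigma^\frown k$, one refines $T^\sigma_k$ to an $\mathcal I$-positive subset $B_{\sigma^\frown k}$ and then applies the winning pair hypothesis to $B_{\sigma^\frown k}$ to extract $x_{\sigma^\frown k}$ as an $(i,j)$-winner below it, together with its own witness sequence $\langle T^{\sigma^\frown k}_l \mid l<\omega\rangle$ and associated $N_{\sigma^\frown k} \in B_{\sigma^\frown k}$. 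Condition (ii) along each branch is then automatic: for $\tau \sqsupsetneq \sigma$ the nesting of the $B$'s ensures $N_\tau \in T^\sigma_{\tau(|\sigma|)}$, and every element of this set has colour $i$ with $x_\sigma$ by the winner definition.

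The refinement to $B_{\sigma^\frown k}$ has two aims. First, intersect $T^\sigma_k$ with a countable family of $\mathcal I$-conull subsets---one per ancestor $\rho \s \sigma$ of $\sigma^\frown k$ and per sibling index $k' \neq (\sigma^\frown k)(|\rho|)$---each extracted from Lemma~\ref{firstbasicproperties}(1) applied to the $(i,j)$-saturated pair formed by $T^\rho_{k'}$ and $T^\rho_{(\sigma^\frown k)(|\rho|)}$; this pre-commits $N_{\sigma^\frown k}$ so that future choices on other sibling branches will find $\exists^{\mathcal I}$ witnesses realising colour $i$ or $j$ against it. By $\aleph_1$-completeness of $\mathcal I$ this countable intersection of conull sets remains $\mathcal I$-positive. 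Second, intersect with $\{M : c(F(N_{\tau'}), F(M)) \in \{i, j\}\}$ for each previously-chosen $\tau'$ incomparable to $\sigma^\frown k$; letting $\rho$ denote the longest common prefix of $\tau'$ and $\sigma^\frown k$, each such set is $\mathcal I$-positive in $T^\rho_{(\sigma^\frown k)(|\rho|)}$ by the pre-commitment made at $N_{\tau'}$'s construction. The hard part will be to verify that the accumulated refinement remains $\mathcal I$-positive: finite intersections of $\mathcal I$-positive sets are not positive in general, so the bookkeeping must carefully exploit Lemma~\ref{firstbasicproperties}(1) and (2) in an inductive argument that propagates positivity through the nested structure of the $T$-sets at each ancestral level, using the transfer of $(i,j)$-saturation to arbitrary positive subpairs to derive, at each step, the pointwise conull clauses from which the next positive refinement inherits its positivity. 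This orchestration of positivity and conullity within a fine normal ideal, made possible by $\aleph_1$-completeness, is what replaces the precipitousness games of \cite[Lemma~45]{MR4190059} in the $\zfc$ setting.
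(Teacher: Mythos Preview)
Your overall plan---building a tree of points $x_\sigma$ with $x_{\sigma^\frown k}\in U_{x_\sigma,k}$ and colours in $\{i,j\}$, then invoking Urysohn and Sierpi\'nski---is correct and matches the paper. The colour along each branch is indeed handled automatically by the winner definition, and your topological verification of $Y\cong\mathbb Q$ is fine.

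The genuine gap is exactly where you flag it, and your gesture at a resolution does not close it. Your pre-commitment at $\tau'$ guarantees that $\{N\in T^\rho_{k'}:c(x_{\tau'},F(N))=\text{(right colour)}\}$ is $\mathcal I$-positive in the \emph{original} $T^\rho_{k'}$, but by the time you process $\sigma^\frown k$ you are working inside the much smaller set $T^\sigma_k\subseteq T^\rho_{(\sigma^\frown k)(|\rho|)}$, and positivity in the ambient set does not descend to subsets. Concretely: after choosing $x_{\langle\rangle},x_{\langle 0\rangle},x_{\langle 1\rangle}$, your pre-commitment at $\langle 1\rangle$ gives a positive subset of $T^{\langle\rangle}_0$, but you need it to meet $T^{\langle 0\rangle}_0$ positively, and nothing in your scheme ensures this. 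Stacking several such constraints only worsens the problem. The paper avoids this by \emph{not} deferring the refinements: it maintains explicit leaf labels $h_n(\rho)\in\mathcal I^+$ for every yet-unprocessed leaf $\rho$, together with the invariant that the $h_n(\rho)$'s are pairwise $(i,j)$-saturated (the ``first Ramsey inductive hypothesis''). At each step it chooses the new point $y_{n+1}$ inside a conull subset of $h_n(\sigma_{n+1})$ (obtained from Lemma~\ref{firstbasicproperties}(i) applied to each saturated pair), and \emph{then immediately} shrinks every other $h_n(\rho)$ to the positive set $\{N:c(y_{n+1},F(N))=\text{right}\}$---positive precisely because $y_{n+1}$ came from the conull set. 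Each refinement is thus one positive set cut down by one $\exists^{\mathcal I}$-witness condition, never an intersection of several positive sets; saturation of the new labels follows from Lemma~\ref{firstbasicproperties}(ii), closing the induction.

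One misconception: this theorem is \emph{not} where precipitousness is replaced. The Raghavan--Todor\v{c}evi\'c construction already works for an arbitrary fine normal ideal $\mathcal I$ once a winning pair is given; the large-cardinal content (and the paper's Shelah-style rank argument replacing it) lies in \emph{producing} the winning pair, i.e.\ in Lemma~\ref{Shelahidea}, not here.
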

\begin{proof}
Suppose that $(i,j) \in K^2$ is such that  $(A, \mathcal I)$ is an $(i,j)$-winning pair.

Let $\vec \sigma = \langle \sigma_n\mid n< \omega\rangle$ be an injective enumeration of ${}^{< \omega}\omega$ satisfying that if $\rho_0 \sqsubset  \rho_1$ are elements of ${}^{< \omega}\omega$, then $\rho_0$ is enumerated before $\rho_1$. We will recursively select an injective sequence $\langle y_{\sigma_n} \mid n <\omega \rangle$ of elements of $X$ and the set of these elements will constitute the desired set $Y$. 

We will ensure that for $\rho \in {}^{< \omega}\omega$, the sequence $\langle y_{\rho{}^\smallfrown \langle k\rangle} \mid k< \omega\rangle $ is dense around $y_\rho$: for $k< \omega$, we will enusure that $y_{\rho {}^\smallfrown\langle k \rangle} \in U_{y_\rho, k}$. Once this is achieved, $Y$ would be a countable subset of a space with a point-countable basis and hence also second countable. As $X$ is regular Hausdorff, it would follow that $Y$ is metrisable by the Urysohn Metrisation Theorem, and as the construction would ensure that $Y$ is dense-in-itself, $Y$ would be homeomorphic to $\mathbb Q$ by the theorem of Sierpi{\'n}ski we have mentioned already. 

We will also secure that $c``[Y]^2= \{i,j\}$ (recall that $i$ and $j$ need not be distinct), and more informatively, we will have that for $m <n < \omega$, 
\begin{itemize}
\item  if $\sigma_m\sqsubset \sigma_n$, then $c(y_{\sigma_m}, y_{\sigma_n}) = i$, and
\item otherwise, if $\sigma_m <_{\mathrm{lex}} \sigma_n$ then $c(y_{\sigma_m}, y_{\sigma_n}) = j$, and 
\item otherwise, if $\sigma_n <_{\mathrm{lex}} \sigma_m$, then $c(y_{\sigma_m}, y_{\sigma_n}) = i$.
\end{itemize}
From hereon, to simplify the notation, for $n< \omega$ we will also refer to $y_{\sigma_n}$ as $y_n$.

We now give details about the recursive procedure. First we talk about the inductive assumption, then how to perpetuate it. For $n< \omega$, we will have at the end of stage $n$ the sequence $\langle y_m \mid m\leq n\rangle$, the portion of $Y$ constructed so far, and a subtree of ${}^{< \omega}\omega$ which we partition into two types of nodes, $\mathcal B_n$, the \emph{branching nodes}, and $\mathcal L_n$, the \emph{leaves}. The nodes of $\mathcal B_n$ will be the finite set of $\rho \in {}^{< \omega}\omega$ for which we have already decided $y_\rho$. 
The nodes of $\mathcal L_n$ will be the $\rho \in {}^{<\omega}\omega$ for which we have made some commitment to what $y_\rho$ is, but not decided it totally. So, we have also a labelling $h_n: \mathcal L_n \rightarrow \mathcal I^+$ which will give information about this commitment. Eventually any $ \rho \in \mathcal L_n$ will be added to $\mathcal B_{n'}$ for some $n' > n$ (we can be precise: the least such $n'< \omega$ is such that $\rho= \sigma_{n'}$) and then we will ensure that for some $M \in h_n(\rho)$, $F(M) = y_\rho$. Naturally, as the recursion progresses our approximations to $y_\rho$ will get better and better. 

For details, at the end of stage $n$ the objects $\langle y_m \mid m\leq n\rangle, \mathcal B_n, \mathcal L_n, h_n$ will satisfy the following properties.
\begin{enumerate}
\item (Injectivity requirement) The sequence $\langle y_m \mid m \leq n \rangle$ is an injective sequence of elements of $X$.
\item (Tree requirement) The set $\mathcal L_n \cup \mathcal B_n$ satisfies the following:
\begin{itemize}
\item the set $\mathcal L_n \cup \mathcal B_n$ is a subtree of  ${}^{< \omega}\omega$, that is, closed under initial segments;
\item the branching nodes $\mathcal B_n$ will be the set $\{\sigma_m \mid m\leq n\}$; 
\item for every $\rho\in \mathcal B_n$,  $\{\rho{}^\smallfrown\langle k\rangle \mid k< \omega\} \s \mathcal L_n \cup \mathcal B_n$;
\item the nodes of $\mathcal L_n$ are terminal nodes of the tree $\mathcal L_n \cup \mathcal B_n$: for $\rho\in \mathcal L_n$ and $\rho' \in \mathcal L_n \cup \mathcal B_n$, $\rho \not \sqsubset \rho'$.
\end{itemize}
\item (Convergence requirement) For $k< \omega$ such that $\rho, \rho{}^\smallfrown\langle k \rangle$ are both in $\mathcal B_n$, $y_{\rho{}^\smallfrown\langle k \rangle} \in U_{\rho, k}$.
\item (Ramsey requirement) The sequence $\langle y_m \mid m\leq n\rangle$ satisfies that for $m<m' \leq n$,
\begin{itemize}
\item  if $\sigma_{m } \sqsubset \sigma_{m' }$, then $c(y_{\sigma_{m}}, y_{\sigma_{m'}}) = i$, and
\item otherwise, if  $\sigma_{m }<_{\mathrm{lex}} \sigma_{m'}$, then $c(y_{\sigma_{m}}, y_{\sigma_{m'}}) = j$, and 
\item otherwise,  $\sigma_{m' }<_{\mathrm{lex}} \sigma_{m}$, and then $c(y_{\sigma_{m }}, y_{\sigma_{m'}}) = i$.
\end{itemize}
\item The labelling $h_n: \mathcal L_n \rightarrow \mathcal I^+$ satisfies the following requirements:
\begin{itemize}
\item for every $\rho \in \mathcal L_n$, $h_n(\rho) \s A$ and $h_n(\rho) \in \mathcal I^+$;
\item (injectivity promise) for every $\rho \in \mathcal L_n$, for every $N \in h_n(\rho)$,  $F(N) \notin \{y_m \mid m \leq n \} = \emptyset$;
\item (convergence promise) for every $\rho\in \mathcal B_n$, and every $k< \omega$ such that $\rho{}^{\smallfrown}\langle k \rangle \in \mathcal L_n$, and every $N \in  h_n(\rho{}^{\smallfrown}\langle k\rangle)$, $F(N) \in U_{y_\rho, k}$;
\item (first Ramsey inductive hypothesis) if $\rho_0 <_{\mathrm{lex}} \rho_1$ are elements of $\mathcal L_n$, then $( h_n(\rho_1), h_n(\rho_0))$ is $(i,j)$-saturated over $\mathcal I$;
\item (second Ramsey inductive hypothesis) if $\rho\in \mathcal B_n$ and $k< \omega$ and $\rho{}^\smallfrown \langle k \rangle \in \mathcal L_n$, then for every $N \in h_n(\rho{}^\smallfrown \langle k \rangle)$, $c(F(N), y_\rho) = i$;
\item (third Ramsey inductive hypothesis) if $\rho_0 \in \mathcal B_n$ and $\rho_1\in \mathcal L_n$ and $\rho_0 \not \sqsubset  \rho_1$ and $\rho_0 <_{\mathrm{lex}} \rho_1$, then  for every $N\in h_n(\rho_1)$, $c(F(N), y_{\rho_0}) = j$;
\item (fourth Ramsey inductive hypothesis) if $\rho_0 \in \mathcal B_n$ and $\rho_1 \in \mathcal L_n$ and $\rho_0 \not \sqsubset  \rho_1$ and $\rho_1 <_{\mathrm{lex}} \rho_0$, then  for every $N \in h_n(\rho_1)$, $c(F(N), y_{\rho_0}) = i$.
\end{itemize}
\end{enumerate}

$\br$ \underline{The base step.} To select $y_0$, let $x$ be an $(i,j)$-winner below $A$ over $\mathcal I$, which is witnessed by a sequence $\langle T_k \mid k< \omega\rangle$ of elements of $\mathcal I^+$ which are subsets of $A$. We let $y_0 := x$. We also take $\mathcal L_0:= \{\langle k \rangle \mid k< \omega\}$ and let $h_0: \mathcal L_0 \rightarrow \mathcal I^+$ be the function $\langle k\rangle \mapsto T_k$, and let $\mathcal B_0:= \{\langle \rangle\}$.

$\br$ \underline{The recursive step.} Suppose that $\langle y_m \mid m\leq n\rangle, \mathcal B_n, \mathcal L_n, h_n$ have been constructed satisfying the above properties. We describe how to select $\langle y_m \mid m\leq n+1\rangle, \mathcal B_{n+1}, \mathcal L_{n+1}, h_{n+1}$. Clearly we must have that 
$$\mathcal B_{n+1}:= \{\sigma_m \mid m \leq n+1\}.$$
Now note that by the nature of the enumeration $\vec \sigma$, for every $\rho \in {}^{< \omega}\omega$ such that $\rho\sqsubset  \sigma_{n+1}$, we have that $\rho \in \mathcal B_n$. So it follows that $\sigma_{n+1} \in \mathcal L_n$. Then by the nature of $\mathcal L_n$, for every $\rho\in \mathcal L_n$, $\rho \not \sqsubset  \sigma_{n+1}$ and $\sigma_{n+1} \not \sqsubset \rho$. So, taking
$$\mathcal L_{n+1} :=  (\mathcal L_n \cup\{\sigma_{n+1}{}^{\smallfrown}\langle k\rangle \mid k< \omega\}) \setminus \{\sigma_{n+1}\},$$ 
we see that the tree requirement at stage $n+1$ is taken care of.

In order to ensure continuity between the stages of the recursion, we will ensure that $y_{\sigma_{n+1}} = F(M)$ for some $M \in h_n(\sigma_{n+1})$ and also, for $\rho \in \mathcal L_n \setminus\{\sigma_{n+1}\}$, $h_{n+1}(\rho) \s h_n(\rho)$. We will refer to this as the \emph{continuity promise}.
This combined with the convergence promise at stage $n$ will immediately ensure that the convergence requirement at stage $n+1$ is met.

Let $B:= h_n(\sigma_{n+1})$, and for $\rho\in \mathcal L_n \setminus\{\sigma_{n+1}\}$, let $D_{\rho}:= h_n(\rho)$. 
Suppose that $\rho\in \mathcal L_n \setminus\{\sigma_{n+1}\}$ is such that $\sigma_{n+1}<_{\mathrm{lex}}\rho$. The first Ramsey inductive hypothesis at stage $n$ ensures that $(D_{\rho}, B)$ is $(i,j)$-saturated over $\mathcal I$. By Lemma~\ref{firstbasicproperties}(i), we have that 
$$\forall^{\mathcal I} M \in B\exists^{\mathcal I} N\in D_{\rho}\, [c(F(M),F(N)) = j].$$
Similarly, for $\rho\in \mathcal L_n \setminus\{\sigma_{n+1}\}$ such that $\rho<_{\mathrm{lex}}\sigma_{n+1}$, the first Ramsey inductive hypothesis at stage $n$ ensures that $(B, D_\rho)$ is $(i,j)$-saturated over $\mathcal I$. By Lemma~\ref{firstbasicproperties}(i), we have that 
$$\forall^{\mathcal I} M\in B\exists^{\mathcal I} N\in D_{\rho}\, [c(F(M), F(N)) = i].$$
So, using the fact that $\mathcal I$ is $\aleph_1$-complete, let $C\s B$ be an element of $\mathcal I^+$ such that for every $M \in C$, 
\begin{itemize}
\item if $\rho\in \mathcal L_n \setminus\{\sigma_{n+1}\}$ is such that $\sigma_{n+1}<_{\mathrm{lex}}\rho$, then 
$$\exists^{\mathcal I} N\in D_{\rho}\, [c(F(M),F(N)) = j],$$
\item and if $\rho\in \mathcal L_n \setminus\{\sigma_{n+1}\}$ is such that $\rho<_{\mathrm{lex}}\sigma_{n+1}$, then 
$$\exists^{\mathcal I} N\in D_{\rho}\, [c(F(M), F(N)) = i].$$
\end{itemize}
Now, $C \s B \s A$ and $C \in \mathcal I^+$, so by the hypotheses we have that there is an $(i,j)$-winner below $C$ over $\mathcal I$. So, fix $M \in C$ such that $F(M)$ is such an $(i,j)$-winner below $C$ over $\mathcal I$ and then let $y_{\sigma_{n+1}}:= F(M)$, and so by our notation, also $y_{n+1}:= F(M)$. 

It should be clear from the injectivity promise at stage $n$ that the sequence $\langle y_m \mid m \leq n+1\rangle$ is injective, so the injectivity requirement at stage $n+1$ is taken care of. The second, third, and fourth Ramsey inductive hypotheses at stage $n$ ensure that the Ramsey requirement is met at stage $n+1$. So we have taken care of the requirements (i)-(iv).

Since $y_{n+1}$ is an $(i, j)$-winner below $C$ over $\mathcal I$, fix a sequence $\langle T_k \mid k< \omega\rangle$ such that 
\begin{itemize}
\item for every $k<  \omega$, $T_k \s C$ and $T_k \in \mathcal I^+$;
    \item for every $N \in \bigcup_{k< \omega}T_k$, $F(N) \neq y_{n+1}$ and $c(y_{n+1},F(N)) = i$;
    \item for every $k< l < \omega$, $(T_l, T_k)$ is $(i, j)$-saturated over $\mathcal I$;
    \item for every $k< \omega$, for every $N \in T_k$, $F(N) \in U_{y_{n+1},k}$.
\end{itemize}
Now to define $h_{n+1}: \mathcal L_{n+1}\rightarrow J^+$.  First, $h_{n+1}\restriction \{\sigma_{n+1}{}^{\smallfrown}\langle k \rangle \mid k< \omega\}$ is just the function $\sigma_{n+1}{}^{\smallfrown}\langle k \rangle \mapsto T_k$. At this point note that for every $k< \omega$, 
$$h_{n+1}(\sigma_{n+1}{}^{\smallfrown}\langle k \rangle) = T_k \s C \s B  = h_n(\sigma_{n+1}).$$
This, the continuity promise, and the choice of $\langle T_k \mid k< \omega\rangle$ and the first Ramsey inductive hypothesis up to stage $n$ ensures that the first Ramsey inductive hypothesis continues at stage $n+1$. 

It should also be clear that the convergence promise has been taken care of by the continuity promise and the fact that $y_{n+1}$ is an $(i,j)$-winner below $C$ over $\mathcal I$ as witnessed by $\langle T_k\mid k< \omega\rangle$. The same reason tells us that the second Ramsey inductive hypothesis is taken care of.

Lastly, to describe $h_{n+1}\restriction (\mathcal L_n \setminus \{\sigma_{n+1}\})$. So, let $\rho \in  \mathcal L_n \setminus \{\sigma_{n+1}\}$. Recall that $D_{\rho}= h_n(\rho)$. By the choice of $C$, and since $M \in C$ and $y_{n+1} =F(M)$, we have that
\begin{itemize}
\item if $\sigma_{n+1} <_{\mathrm{lex}} \rho$, then for $\widetilde D_\rho:=\{N\in D_{\rho} \mid c(F(N),y_{n+1}) = j\}$, $\widetilde D_\rho \in \mathcal I^+$,
\item  if $\rho<_{\mathrm{lex}} \sigma_{n+1} $, then for $\widetilde D_\rho:=\{N \in D_{\rho} \mid c(F(N),y_{n+1}) = i\}$, $\widetilde D_\rho \in \mathcal I^+$.
\end{itemize}
Finally, for $\rho \in  \mathcal L_n \setminus \{\sigma_{n+1}\}$ we let $h_{n+1}(\rho):= \widetilde D_\rho$, and then clearly $h_{n+1}(\rho)\s h_n(\rho)$ and we have met the continuity promise. 

Now for the third and fourth Ramsey inductive hypotheses. It should be clear from the construction that we only need to verify them for $\sigma_{n+1} \in \mathcal B_{n+1}$ and $\rho \in \mathcal L_n \setminus \{\sigma_{n+1}\}$. However, the definition of $\widetilde D_\rho$ was precisely made to ensure this.

Lastly, we come to the injectivity promise. It should be clear from the construction that we only need to verify this for $\sigma_{n+1} \in \mathcal B_{n+1}$ and $\rho \in \mathcal L_n \setminus \{\sigma_{n+1}\}$. But clearly for every $N\in \widetilde D_\rho$ we have that $c(F(N),y_{n+1})$ is defined, so $y_{n+1} \neq F(N)$ for every $N \in \widetilde D_{\rho}$. 

With that the recursive step is completed, and so is the proof.
\end{proof}

\section{A decisive pair}\label{sectionstartingpoint}
The main result of this section is Lemma~\ref{firstmainlemma} which provides us with the decisive starting point for the main lemma, Lemma~\ref{Shelahidea}, in Section~\ref{sectionmainlemma}. The exhaustion argument in Lemma~\ref{findinga} is similar to one in Raghavan and Todor{\v c}evi{\' c} \cite{MR4190059} though our partial orders are different as they allow the ideal to vary. This is necessitated by the presence of two ideals in the statement of Lemma~\ref{firstmainlemma}. Normality of the ideals plays a key role in Lemma~\ref{onestep}. The notion of unsaturation (see Definition~\ref{defnunsaturation}) plays a more prominent role here than in \cite{MR4190059} for this reason.

Let us recall our collection of fixed objects:
\begin{itemize}
\item $(X, \tau)$ is a regular topological space which is not left-separated and has a point-countable basis $\mathcal B$. 
\item $K$ is a positive natural number and $c:[X]^2\rightarrow K$ is a colouring of pairs from $X$.
\item $\Omega$ is an uncountable regular cardinal such that $X, \tau, \mathcal B \in H(\Omega)$.
\item $\langle U_{x,k}\mid x \in X, k< \omega\rangle$ is such that for every $x \in X$, $\langle U_{x, k} \mid k< \omega\rangle$ enumerates $\mathcal B_{\{x\}}$.
\item  $\Lambda_1 \s \pom(H(\Omega))$ is a fixed stationary subset of $\pom(H(\Omega))$ such that for every $M \in \Lambda_1$, $M \prec H(\Omega)$ and $X, \tau, \mathcal B \in M$, and $\overline{M \cap X} \setminus M \neq \emptyset$. 
\item $F: \Lambda_1 \rightarrow X$ is such that for every $M \in \Lambda_1$, $F(M) \in \overline{M \cap X} \setminus M$.
\item $\mathbb I$ is the collection of fine normal ideals on $\pom(H(\Omega))$.
\end{itemize}

Recall that saturation and weak saturation were defined in Definition~\ref{defnweaksaturation}. The following is related to these.
\begin{definition} \label{defnunsaturation}
Let $(A, B, \mathcal I) \in \mathcal C_2$ and $F \s K^2$. We say that $(A, B)$ is \emph{$F$-unsaturated over $\mathcal I$} if for every $(i,j) \in F$, $(A, B)$ is not weakly $(i,j)$-saturated over $\mathcal I$.
\end{definition}

For the next lemma we again remind the reader of our convention that if $c(x,y)$ is defined, that is, has a specific value, then we implicitly assume that also $x \neq y$.
\begin{lemma}\label{weaksatij}
For any $(A, B,\mathcal I)\in \mathcal C_2$, for some $(i,j) \in K^2$ we have that $(A,B)$ is weakly $(i,j)$-saturated over $\mathcal I$.
\end{lemma}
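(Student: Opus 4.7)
The plan is to find $i$ and $j$ witnessing the two required clauses separately, by a straightforward pigeonhole argument twice, using Lemma~\ref{lemma45} to handle the fact that $c$ is only defined on distinct pairs.

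First I would fix $M \in A$ and partition $B$ according to the value of $c(F(M), F(\cdot))$. Since $F(M) \in X$, Lemma~\ref{lemma45} (applied to $(B, \mathcal I) \in \mathcal C_1$ with $x := F(M)$) tells us that the ``collision'' set $\{N \in B \mid F(N) = F(M)\}$ lies in $\mathcal I$. Therefore $B$ decomposes modulo $\mathcal I$ as the finite disjoint union of the sets $\{N \in B \mid F(M) \neq F(N),\ c(F(M), F(N)) = k\}$ for $k < K$. Since $B \in \mathcal I^+$ and $\mathcal I$ is $\aleph_1$-complete (in particular, closed under finite unions), at least one of these $K$ sets must be in $\mathcal I^+$; call its index $k(M)$. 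This defines a function $k: A \to K$.

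Applying the pigeonhole once more, since $K$ is finite, $A \in \mathcal I^+$, and $\mathcal I$ is closed under finite unions, there is some $i < K$ such that the set $A_i := \{M \in A \mid k(M) = i\}$ is in $\mathcal I^+$. By construction, for every $M \in A_i$ the set $\{N \in B \mid c(F(M), F(N)) = i\}$ is in $\mathcal I^+$, so
$$\exists^{\mathcal I} M \in A\, \exists^{\mathcal I} N \in B\, [c(F(M), F(N)) = i],$$
which is the first clause of weak $(i,j)$-saturation.

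The symmetric argument, swapping the roles of $A$ and $B$ (and using Lemma~\ref{lemma45} applied to $(A, \mathcal I)$), produces some $j < K$ such that
$$\exists^{\mathcal I} N \in B\, \exists^{\mathcal I} M \in A\, [c(F(M), F(N)) = j],$$
which is the second clause. Thus $(A, B)$ is weakly $(i,j)$-saturated over $\mathcal I$ for this pair $(i,j) \in K^2$. There is no real obstacle here; the only subtlety is remembering to excise the diagonal $\{N \in B \mid F(N) = F(M)\}$ via Lemma~\ref{lemma45} so that the values $c(F(M), F(N))$ are actually defined on an $\mathcal I$-positive remainder of $B$.
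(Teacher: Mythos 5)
Your proof is correct and is essentially the same as the paper's: both excise the diagonal $\{N \in B \mid F(N) = F(M)\}$ via Lemma~\ref{lemma45}, apply pigeonhole in $N$ for each fixed $M$ to get $i_M$, then pigeonhole again over $M$ to stabilise, and run the symmetric argument for $j$.
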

\begin{proof} 
Fix for the moment an $M \in A$, and let $x := F(M)$. By Lemma~\ref{lemma45}, $\{N \in B \mid F(N) = x\} \in \mathcal I$. So $C \in \mathcal I^+$ where 
$$C:=\{N \in B \mid F(N) \neq x\}.$$
So for some $i_M<K$ we have that 
    $$\{N\in C \mid c(x,F(N))= i_M\} \in \mathcal I^+.$$
To summarise, for every $M \in A$, for some $i_M< K$ we have that 
    $$\{N\in B \mid F(M) \neq F(N)\ \&\ c(F(M),F(N))= i_M\} \in \mathcal I^+.$$
So stabilising on $i_M$ we obtain an $i< K$ such that 
    $$\{M \in A \mid i_M = i\} \in \mathcal I^+.$$
    So, 
    $$\exists^{\mathcal I} M\in A\exists^{\mathcal I}N \in B\,[c(F(M),F(N))=i].$$
Similarly we can find a $j< K$ such that 
$$\exists^{\mathcal I} N \in B \exists^{\mathcal I} M \in A\, [c(F(M), F(N)) = j],$$
and conclude the proof.
\end{proof}

\begin{lemma}\label{basicproperties} Let $(C, D, \mathcal J) \leq_2 (A, B, \mathcal I)$ be elements of $\mathcal C_2$. Let $(i,j) \in K^2$. 

If $(A, B)$ is $\{(i,j)\}$-unsaturated over $\mathcal I$, then also $(C, D)$ is $\{(i,j)\}$-unsaturated over $\mathcal J$.
\end{lemma}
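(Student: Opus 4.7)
The plan is to unpack the definition of $\{(i,j)\}$-unsaturation and observe that the failure of weak $(i,j)$-saturation for $(A,B)$ over $\mathcal I$ passes to $(C,D)$ over $\mathcal J$ by monotonicity, since everything relevant gets either smaller (the witnessing sets) or larger (the ideal). No fancy tools are needed; the entire content is chasing $\subseteq$'s.

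First, recall that $\exists^{\mathcal I} M \in A\,\phi(M)$ unpacks, via the definition of $\mathcal I + A$ in Fact~\ref{lemmaidealfacts}, to the simple statement $\{M \in A \mid \phi(M)\} \in \mathcal I^+$. So $(A,B)$ is weakly $(i,j)$-saturated over $\mathcal I$ precisely when the two sets
\begin{align*}
S_i(A,B,\mathcal I) &:= \{M \in A \mid \{N \in B \mid c(F(M),F(N)) = i\} \in \mathcal I^+\}, \\
T_j(A,B,\mathcal I) &:= \{N \in B \mid \{M \in A \mid c(F(M),F(N)) = j\} \in \mathcal I^+\}
\end{align*}
are both $\mathcal I$-positive. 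Hence $(A,B)$ is $\{(i,j)\}$-unsaturated over $\mathcal I$ iff at least one of $S_i(A,B,\mathcal I)$ or $T_j(A,B,\mathcal I)$ lies in $\mathcal I$. By the symmetry between the two clauses I may assume without loss of generality that $S_i(A,B,\mathcal I) \in \mathcal I$.

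Next, I would verify the set-theoretic inclusion $S_i(C,D,\mathcal J) \subseteq S_i(A,B,\mathcal I)$. Indeed, take any $M \in S_i(C,D,\mathcal J)$; then $M \in C \subseteq A$, and $\{N \in D \mid c(F(M),F(N)) = i\} \in \mathcal J^+$. Suppose for contradiction that $M \notin S_i(A,B,\mathcal I)$, i.e.\ $\{N \in B \mid c(F(M),F(N)) = i\} \in \mathcal I$. Then, since $\mathcal J \supseteq \mathcal I$, this set also lies in $\mathcal J$, and since $D \subseteq B$ the subset $\{N \in D \mid c(F(M),F(N)) = i\}$ also lies in $\mathcal J$, contradicting the $\mathcal J$-positivity we assumed. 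Hence $M \in S_i(A,B,\mathcal I)$, establishing the containment.

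From $S_i(A,B,\mathcal I) \in \mathcal I \subseteq \mathcal J$ and $S_i(C,D,\mathcal J) \subseteq S_i(A,B,\mathcal I)$ it follows that $S_i(C,D,\mathcal J) \in \mathcal J$, so the first conjunct of weak $(i,j)$-saturation already fails for $(C,D)$ over $\mathcal J$, finishing the proof. There is no genuine obstacle here; the only point requiring attention is to notice that both directions of the monotonicity point the right way, namely that shrinking $A,B$ and enlarging $\mathcal I$ each only shrink the sets $S_i$ and $T_j$ modulo the new ideal, which is exactly what unsaturation demands.
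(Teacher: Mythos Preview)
Your proof is correct and follows essentially the same approach as the paper: both argue that failure of weak $(i,j)$-saturation means one of the two witnessing sets lies in $\mathcal I$, and then use $C\subseteq A$, $D\subseteq B$, $\mathcal J\supseteq\mathcal I$ (hence $\mathcal J^+\subseteq\mathcal I^+$) to push this into $\mathcal J$. The only cosmetic difference is that you name the sets $S_i,T_j$ and verify the containment $S_i(C,D,\mathcal J)\subseteq S_i(A,B,\mathcal I)$ by contradiction, whereas the paper asserts the corresponding implication directly.
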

\begin{proof}  As $\mathcal I\subseteq \mathcal J$, we have that $\mathcal I^+ \supseteq \mathcal J^+$. Suppose that $(A, B)$ is not weakly $(i,j)$-saturated over $\mathcal I$. There are two possibilities for why. The first is that 
$$\{M\in A \mid \exists^{\mathcal I} N \in B\, [c(F(M),F(N)) =i]\} \in \mathcal I,$$
in which case as $C \s A$ and $D \s B$  and $\mathcal J^+ \s \mathcal I^+$, also 
$$\{M\in C \mid \exists^{\mathcal J} N\in D\, [c(F(M), F(N)) =i]\} \in \mathcal J,$$
so that $(C, D)$ is not weakly $(i,j)$-saturated over $\mathcal J$ as well. 

The other possibility is that 
$$\{N\in B \mid \exists^{\mathcal I} M \in A\, [c(F(M), F(N)) =j]\} \in \mathcal I,$$
in which case we similarly have that 
$$\{N\in D \mid \exists^{\mathcal J} M \in C\, [c(F(M), F(N)) =j]\} \in \mathcal J,$$
so again $(C, D)$ is not weakly $(i,j)$-saturated over $\mathcal J$.
\end{proof}

\begin{lemma} \label{findinga}There is an $(A, \mathcal I) \in \mathcal C_1$ and an $F \subsetneq K^2$ and a pair $(i,j) \in K^2 \setminus F$ such that the following hold:  
\begin{enumerate}
\item for every $(C, D, \mathcal J) \leq_2 (A, A, \mathcal I)$ from $\mathcal C_2$, if $(C, D)$ is $F$-unsaturated over $\mathcal J$, then $(C, D)$ is $(i,j)$-saturated over $\mathcal J$;
\item for every $(B, \mathcal J_0)\leq_1 (A, \mathcal I)$, there is a $(C, D, \mathcal J_1) \leq_2(B, B, \mathcal J_0)$ such that $(C, D)$ is $F$-unsaturated over $\mathcal J_1$.
\end{enumerate} 
\end{lemma}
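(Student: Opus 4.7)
The plan is a maximality argument exploiting the finiteness of $K^2$. Let $\mathcal F$ denote the collection of subsets $F \subseteq K^2$ for which some $(A, \mathcal I) \in \mathcal C_1$ satisfies condition (ii) of the lemma. The initial observation is that $\emptyset \in \mathcal F$ trivially --- since $\emptyset$-unsaturation is vacuous, every $(A, \mathcal I) \in \mathcal C_1$ witnesses (ii) for $F = \emptyset$ --- while $K^2 \notin \mathcal F$, because Lemma~\ref{weaksatij} forces every element of $\mathcal C_2$ to be weakly $(i, j)$-saturated for some $(i, j) \in K^2$, ruling out any $K^2$-unsaturated $\leq_2$-extension. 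Since $K^2$ is finite, I would fix $F \subsetneq K^2$ maximal in $\mathcal F$ together with a witnessing $(A, \mathcal I) \in \mathcal C_1$, and pick any $(i, j) \in K^2 \setminus F$, which is non-empty by the maximality.

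Next, define
$$E_{ij} := \{(B, \mathcal K) \leq_1 (A, \mathcal I) : \text{no } (C, D, \mathcal J) \leq_2 (B, B, \mathcal K) \text{ is } F \cup \{(i, j)\}\text{-unsaturated over } \mathcal J\}.$$
A direct verification shows $E_{ij}$ is downward closed under $\leq_1$: any $\leq_2$-extension of $(B', B', \mathcal K')$ is automatically a $\leq_2$-extension of $(B, B, \mathcal K)$ whenever $(B', \mathcal K') \leq_1 (B, \mathcal K)$. I then claim $E_{ij} \neq \emptyset$; for otherwise $(A, \mathcal I)$ itself would satisfy (ii) for the strictly larger set $F \cup \{(i, j)\}$, contradicting the maximality of $F$. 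Fix any $(A'', \mathcal I'') \in E_{ij}$; I claim this pair together with the index $(i, j)$ satisfies both (i) and (ii) of the lemma.

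Condition (ii) for $F$ transfers from $(A, \mathcal I)$ to $(A'', \mathcal I'')$ because (ii) is manifestly downward closed under $\leq_1$: any $(B, \mathcal J_0) \leq_1 (A'', \mathcal I'')$ is also $\leq_1 (A, \mathcal I)$. For condition (i), I argue by contradiction: suppose some $(C, D, \mathcal J) \leq_2 (A'', A'', \mathcal I'')$ is $F$-unsaturated yet fails $(i, j)$-saturation over $\mathcal J$. By Definition~\ref{defnweaksaturation}(ii) there are $C' \subseteq C$ and $D' \subseteq D$, both in $\mathcal J^+$, with $(C', D')$ not weakly $(i, j)$-saturated over $\mathcal J$. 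Applying Lemma~\ref{basicproperties} to $(C', D', \mathcal J) \leq_2 (C, D, \mathcal J)$ for each $(i', j') \in F$, the pair $(C', D')$ inherits $F$-unsaturation from $(C, D)$, so $(C', D')$ is $F \cup \{(i, j)\}$-unsaturated over $\mathcal J$. But $(C', D', \mathcal J) \leq_2 (A'', A'', \mathcal I'')$, contradicting $(A'', \mathcal I'') \in E_{ij}$.

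The main conceptual move is the definition of $E_{ij}$ and its downward closure, which is the mechanism by which the global nonemptiness forced by the maximality of $F$ gets converted into the pointwise saturation property demanded by condition (i). I anticipate that the principal care required will be in the bookkeeping between the two partial orders: verifying that $\leq_2$-extensions compose appropriately through $\leq_1$-refinements of their base (for the downward closure of $E_{ij}$), and that the witness $(C', D', \mathcal J)$ produced in the argument for (i) really lies $\leq_2$-below $(A'', A'', \mathcal I'')$ itself rather than merely below some intermediate triple.
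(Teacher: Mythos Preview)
Your argument is correct and genuinely different from the paper's. The paper proceeds in two stabilization passes: first it defines $\Sigma_0(C,D,\mathcal J)$ as the set of pairs $(i,j)$ for which $(C,D)$ is \emph{not} weakly $(i,j)$-saturated over $\mathcal J$, isolates the notion of being \emph{maximal for $\Sigma_0$}, and then defines $\Gamma(A,\mathcal I)$ as the collection of $\Sigma_0$-values achieved by $\Sigma_0$-maximal extensions of $(A,A,\mathcal I)$. It stabilizes $\Gamma$ along $\leq_1$ (using finiteness of $\mathcal P(\mathcal P(K^2))$), picks $F$ $\subseteq$-maximal in the stabilized $\Gamma(a)$, and then verifies (i) via the maximality of $F$ inside $\Gamma(a)$ and (ii) via the stability of $\Gamma$. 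Your approach bypasses both $\Sigma_0$-maximality and $\Gamma$: you maximize $F$ directly over the family $\mathcal F$ of sets for which (ii) is witnessed somewhere in $\mathcal C_1$, and then perform a single $\leq_1$-refinement into $E_{ij}$ to secure (i). This is shorter and avoids the auxiliary notion of $\Sigma_0$-maximal conditions; what the paper's organization buys is a more explicit description of \emph{all} the achievable unsaturation patterns below the chosen $(A,\mathcal I)$, but that extra information is not needed for the lemma as stated. One remark: your observation that $E_{ij}$ is downward closed under $\leq_1$ is correct but not actually used in your argument---you only need $E_{ij}\neq\emptyset$, which follows directly from the maximality of $F$ as you say.
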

\begin{proof}Below, we will use the letters $a,b$ for members of $\mathcal C_1$, and $p,q$ for members of $\mathcal C_2$. 
We define a function $\Sigma_0: \mathcal C_2 \rightarrow \p(K^2)$ as follows: for $(A, B, \mathcal I) \in \mathcal C_2$,
$$\Sigma_0(A, B, \mathcal I):= \{(i,j) \in K^2\mid (A,B)\text{ is not weakly $(i,j)$-saturated over }\mathcal I\}.$$
Note that by Lemma~\ref{weaksatij}, $K^2 \notin \im(\Sigma_0)$. Also, for $q \leq_2 p$ elements of $\mathcal C_2$,  $\Sigma_0(q) \supseteq \Sigma_0(p)$ by Lemma~\ref{basicproperties}.

For $q\in \mathcal C_2$, we will say that it is \emph{maximal for $\Sigma_0$} if for every $q' \leq_2 q$ from $\mathcal C_2$, we have that 
$$\Sigma_0(q) = \Sigma_0(q').$$

\begin{claim}\label{claim382}For every $p \in \mathcal C_2$, there is at least one $q \leq_2 p$ which is maximal for $\Sigma_0$.
\end{claim}
\begin{why} The map $\Sigma_0: \mathcal C_2 \rightarrow \p(K^2)$ has the property that for $q' \leq_2 q$ from $\mathcal C_2$, we have that $\Sigma_0(q') \supseteq \Sigma_0(q)$. Since the set $K^2$ is finite, we cannot get an infinite $\leq_2$-decreasing sequence of elements of $\mathcal C_2$ whose $\Sigma_0$-values are strictly $\s$-increasing. The claim follows. 
\end{why}
Now, define a function $\Sigma_1: \mathcal C_2 \rightarrow \p(\p(K^2))$ as follows: for $p \in \mathcal C_2$ and $F \s K^2$, $F \in  \Sigma_1 (p)$ if there is a $q\in \mathcal C_2$ such that 
\begin{itemize}
\item $q \leq_2 p$,
\item $\Sigma_0(q) = F$, 
\item $q$ is maximal for $\Sigma_0$.
\end{itemize}
Note that for $p \in \mathcal C_2$, $K^2 \notin \Sigma_1(p)$.
Now we define a function $\Gamma: \mathcal C_1 \rightarrow  \p(\p(K^2))$ as follows: for $(A,\mathcal I) \in \mathcal C_1$, 
$$\Gamma(A, \mathcal I):=\Sigma_1(A, A ,\mathcal I).$$
Note that by Claim~\ref{claim382}, for every $a \in \mathcal C_1$, $\Gamma(a) \neq \emptyset$, and as we have pointed out before, $K^2 \notin \Gamma(a)$.
\begin{claim} Let $b \leq_1 a$ be elements of $\mathcal C_1$. Then $\Gamma(b) \subseteq \Gamma(a)$
\end{claim}
\begin{why}
Suppose that $b= (B, \mathcal J)$ and $a= (A, \mathcal I)$. Then if $p \leq_2(B, B, \mathcal J)$ and is maximal for $\Sigma_0$, then $p \leq_2(A, A, \mathcal I)$ as well, and naturally $p$ is still maximal for $\Sigma_0$.
\end{why}
\begin{claim} There is an $a \in \mathcal C_1$ such that for every $b \leq_1 a$ in $\mathcal C_1$, $\Gamma(b)= \Gamma(a)$.
\end{claim}
\begin{why}
The map $\Gamma: \mathcal C_1 \rightarrow  \p(\p(K^2))$ has the property that for $b\leq_1 a$ from $\mathcal C_1$, we have that $\Gamma(b) \subseteq \Gamma(a)$. Since the set $\p(\p(K^2))$ is finite, we cannot get an infinite $\leq_1$-decreasing sequence of elements of $\mathcal C_1$ whose $\Gamma$-values are strictly $\s$-decreasing. 
\end{why}
From now on we fix such an $a= (A, \mathcal I)$ in $\mathcal C_1$. Then $\Gamma(a) \s \p(K^2)$ and $\Gamma(a) \neq \emptyset$ and $K^2 \notin \Gamma(a)$.

Fix now $F \in \Gamma(a)$ such that for every $G \in \Gamma(a)$, if $F \s G$, then $F = G$. As $F \neq K^2$, we lastly fix $(i,j) \in K^2 \setminus F$. 
We will now show that $a$ and  $F$ and  $(i,j)$ are as required by the lemma. 
\begin{claim} Let $(C, D, \mathcal J) \in \mathcal C_2$ be such that $(C, D, \mathcal J) \leq_2(A, A, \mathcal I)$. Then, if $(C, D)$ is $F$-unsaturated over $\mathcal J$, then $(C, D)$ is $(i,j)$-saturated over $\mathcal J$.
\end{claim} 
\begin{why}Indeed, suppose that $(C, D)$ is $F$-unsaturated over $\mathcal J$ but not $(i,j)$-saturated over $\mathcal J$. In this case, there is $(C', D', \mathcal J)\leq_2 (C, D, \mathcal J)$ such that $(C', D')$ is not weakly $(i,j)$-saturated over $\mathcal J$, so that $(C', D')$ is $(F \cup\{(i,j)\})$-unsaturated over $\mathcal J$. Now, let $q \leq_2 (C', D', \mathcal J)$ be maximal for $\Sigma_0$. Then $q \leq_2 (A, A, \mathcal I)$ and $\Sigma_0(q) \supseteq F \cup\{(i,j)\}$. So let $G:= \Sigma_0(q)$. Then $G \in \Gamma(a)$ and $F \subsetneq G$. This contradicts the choice of $F$.
\end{why}
\begin{claim} Let $(B, \mathcal J_0)\leq_1 (A, \mathcal I)$. Then there is $(C, D, \mathcal J_1) \leq_2(B, B, \mathcal J_0)$ in $\mathcal C_2$ such that $(C, D)$ is $F$-unsaturated over $\mathcal J_1$.
\end{claim}
\begin{why} By the choice of $(A, \mathcal I)$, we have that $\Gamma(B, \mathcal J_0) = \Gamma(A, \mathcal I)$, so $F \in \Gamma(B, \mathcal J_0)$, so $F \in \Sigma_1(B, B, \mathcal J_0)$. So there is $(C, D, \mathcal J_1)\leq_2(B, B, \mathcal J_0)$ which is maximal for $\Sigma_0$ and such that $\Sigma_0(C, D, \mathcal J_1) = F$. So $(C, D)$ is $F$-unsaturated over $\mathcal J_1$, as desired.
\end{why}
This completes the proof.
 \end{proof}
\begin{lemma}\label{onestep}
Let $\mathcal I\s \mathcal J$ be elements of $\mathbb I$. Let $(C, D, \mathcal J) \in \mathcal C_2$ and $F \s K^2$ be such that $(C, D)$ is $F$-unsaturated over $\mathcal J$. 

Then, there is $(S, T, \mathcal J) \leq_2 (C, D, \mathcal J)$  such that $(S, T)$ is $F$-unsaturated over $\mathcal I$.
\end{lemma}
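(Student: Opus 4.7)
The plan is to reduce to the case $|F| = 1$ by iteration and then to treat a single pair $(i,j) \in F$. For the reduction, I observe that being $\{(i,j)\}$-unsaturated over $\mathcal I$ is preserved under $\leq_2$-refinement with fixed underlying ideal: this is Lemma~\ref{basicproperties} applied with both ideals equal to $\mathcal I$. Hence I can handle the finitely many elements of $F$ one at a time: at each stage I refine $(C_k, D_k, \mathcal J)$ to some $(C_{k+1}, D_{k+1}, \mathcal J)$ witnessing $\mathcal I$-unsaturation for a fresh pair $(i_k, j_k)$, while Lemma~\ref{basicproperties} automatically preserves $\mathcal I$-unsaturation for pairs handled earlier and $\mathcal J$-unsaturation for pairs still to be handled.

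So fix $(i,j) \in F$. Since $(C,D)$ is not weakly $(i,j)$-saturated over $\mathcal J$, either the $i$-clause or the $j$-clause fails; these two cases are symmetric, and I will describe the former. Setting $Z_M := \{N \in D : c(F(M), F(N)) = i\}$, the hypothesis amounts to $A_1 := \{M \in C : Z_M \in \mathcal J^+\} \in \mathcal J$. I take $S := C \setminus A_1 \in \mathcal J^+$, so that $Z_M \in \mathcal J$ for every $M \in S$.

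The central step is to produce $T \subseteq D$ with $T \in \mathcal J^+$ such that $Z_M \cap T \in \mathcal I$ for every $M \in S$; this is where normality of $\mathcal J$ is used in an essential way. The key observation is that $Z_M$ depends on $M$ only through the point $F(M) \in X \subseteq H(\Omega)$: defining $W_x := \{N \in D : c(x, F(N)) = i\}$ for $x \in F``[S]$ and $W_x := \emptyset$ for $x \in H(\Omega) \setminus F``[S]$, one has $Z_M = W_{F(M)}$ and $W_x \in \mathcal J$ for every $x \in H(\Omega)$. Normality of $\mathcal J$ then yields $Y := \diagonalunion_{x \in H(\Omega)} W_x \in \mathcal J$, so $T := D \setminus Y$ is in $\mathcal J^+$. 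The main obstacle I anticipate is precisely this reindexing trick: once the family $\{Z_M\}_{M \in S}$ is parametrised by base points $F(M) \in H(\Omega)$, normality of $\mathcal J$ collapses all the $\mathcal J$-null slices into a single $\mathcal J$-null set, after which fineness of $\mathcal I$ will take care of the residual.

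The verification is short. Given $M \in S$ and $N \in Z_M \cap T$, if $F(M)$ were in $N$ then by construction of $Y$ we would have $N \notin W_{F(M)}$; but $N \in D$ with $c(F(M), F(N)) = i$ says exactly $N \in W_{F(M)}$, a contradiction. Hence $F(M) \notin N$, so $Z_M \cap T \subseteq \{N \in \pom(H(\Omega)) : F(M) \notin N\}$, which is in $\mathcal I$ because $\mathcal I$ is fine. Consequently $\{M \in S : Z_M \cap T \in \mathcal I^+\} = \emptyset \in \mathcal I$, which is the failure of the $i$-clause of weak $(i,j)$-saturation over $\mathcal I$ for $(S, T)$, completing the single-pair step. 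The $j$-clause-failure case is handled by the symmetric construction, reindexing slices $\{M \in C : c(F(M), F(N)) = j\}$ via $F(N)$.
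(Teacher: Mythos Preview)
Your proof is correct and follows essentially the same approach as the paper's: induction on $|F|$ using Lemma~\ref{basicproperties} for the reduction, and for the single-pair step, reindexing the $\mathcal J$-null slices by the base point $F(M)\in H(\Omega)$ so that normality of $\mathcal J$ collapses them, with fineness of $\mathcal I$ handling the residual. The only cosmetic difference is that the paper phrases the normality step dually (taking the diagonal intersection of the co-null complements $T_x=\{N\in D: c(x,F(N))\neq i\}$ and then invoking Fact~\ref{lemmaidealfacts}(vii)), whereas you take the diagonal union of the null sets $W_x$ and unpack the fineness argument directly; the resulting $(S,T)$ is the same.
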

\begin{proof} Note that as $\mathcal I \s \mathcal J$, we have that $\mathcal J^+ \s \mathcal I^+$. We will prove the lemma by induction on the size of $F$. So, suppose first that $F = \{(i,j)\}$. So we have that $(C, D)$ is not weakly $(i,j)$-saturated over $\mathcal J$. Since the other case is symmetric, let us suppose that this is because 
$$\neg (\exists^{\mathcal J} M \in C \exists^{\mathcal J} N \in D\, [c(F(M), F(N)) = i]).$$
That is, the following set is in $\mathcal J$:
$$\{M \in C \mid \{N\in D \mid c(F(M), F(N)) = i\} \in \mathcal J^+\}.$$
So, $S \in (\mathcal J+C)^*$, where
$$S:=\{M \in C \mid \{N \in D \mid c(F(M), F(N))= i \} \in \mathcal J \}.$$
Note that
$$S=\{M \in C \mid \{N \in D \mid c(F(M), F(N))\neq i \} \in (\mathcal J+D)^* \}.$$
We remind the reader at this point: $c(F(M),F(N)) =i$ implies that $F(M) \neq F(N)$, but $c(F(M),F(N)) \neq i$ may also be for the reason that $F(M) = F(N)$. 

Let 
$$H':=\{x \in H \mid \exists M \in S\, [F(M) = x]\}.$$
For every $x\in  H'$, let 
$$T_x:=\{N \in D \mid c(x,F(N))\neq i \},$$ 
so for every $x \in H'$, we have that $T_x \in (\mathcal J+D)^*$. Now let $T:= \diagonalintersection_{x \in H'}T_x$, which is in $(\mathcal J+D)^*$ as $\mathcal J+D$ is normal. Since $C \in \mathcal J^+$ and $S\in (\mathcal J+C)^*$, we have that $S \in \mathcal J^+$ and hence also $S \in \mathcal I^+$. Similarly, $T \in \mathcal J^+$ and hence also $T \in \mathcal I^+$. 

Now for every $M \in S$, 
$T_{F(M)}= \{N \in D\mid c(F(M), F(N)) \neq i\}$,
and $T_{F(M)} \in (\mathcal I+T)^*$ as $T \in (\mathcal I+T)^*$ and by the definition of the diagonal intersection, see Fact~\ref{lemmaidealfacts}(vii). 

So, for every $M \in S$, $T \cap T_{F(M)} \in (\mathcal I+T)^*$, so 
$$\{N\in T \mid c(F(M),F(N)) \neq i\} \in (\mathcal I+T)^*.$$
So, for every $M \in S$, $\forall^{\mathcal I} N \in T\, [c(F(M), F(N)) \neq i]$. So $(S, T, \mathcal J) \leq_2 (C, D, \mathcal J)$ and $(S, T)$ is not weakly $(i,j)$-saturated over $\mathcal I$. This completes the base step.

Now, for the inductive step. Suppose that $|F| > 1$ and $(i,j) \in F$. First appealing to the inductive hypotheses, we obtain $(S', T', \mathcal J) \leq_2 (C, D, \mathcal J)$ such that $(S', T')$ is $(F \setminus\{(i,j)\})$-unsaturated over $\mathcal I$. By Lemma~\ref{basicproperties} we have that $(S', T')$ is not weakly $(i,j)$-saturated over $\mathcal J$. Then we can appeal to the base case applied to $(S', T', \mathcal J)$ to obtain $(S, T,\mathcal J) \leq_2 (S', T', \mathcal J)$ and $(S, T)$ is not weakly $(i,j)$-saturated over $\mathcal I$, and hence $(S, T)$ is $F$-unsaturated over $\mathcal I$ by Lemma~\ref{basicproperties}.
\end{proof}
\begin{remark} The above lemma shows that for $\mathcal I \s \mathcal J$ ideals in $\mathbb I$ and $(A, B, \mathcal J) \in \mathcal C_2$ and $(i,j)\in K^2$, if $(A, B)$ is $(i,j)$-saturated over $\mathcal I$ then also $(A, B)$ is $(i,j)$-saturated over $\mathcal J$.
\end{remark}

\begin{lemma}\label{firstmainlemma} There is a pair $(A, \mathcal I) \in \mathcal C_1$ and $(i,j) \in K^2$ such that 
\begin{itemize}
\item for every $\mathcal J\in \mathbb I$ with $\mathcal I \subseteq \mathcal J$, 
\item for every $B \s A$ with $B \in \mathcal J^+$,
\end{itemize}
there are $S, T \s B$ with $S, T\in \mathcal J^+$ such that $(S, T)$ is $(i,j)$-saturated over $\mathcal I$.
\end{lemma}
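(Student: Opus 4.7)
The plan is to combine Lemmas~\ref{findinga} and \ref{onestep} directly. First I apply Lemma~\ref{findinga} to produce $(A,\mathcal I)\in\mathcal C_1$, a set $F\subsetneq K^2$, and a pair $(i,j)\in K^2\setminus F$ with the two stated properties. I claim this choice of $(A,\mathcal I)$ and $(i,j)$ works.

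Now fix any $\mathcal J\in\mathbb I$ with $\mathcal I\s\mathcal J$ and any $B\s A$ with $B\in\mathcal J^+$. Then $(B,\mathcal J)\leq_1 (A,\mathcal I)$, so by property (ii) of Lemma~\ref{findinga} there is $(C,D,\mathcal J_1)\leq_2(B,B,\mathcal J)$ in $\mathcal C_2$ such that $(C,D)$ is $F$-unsaturated over $\mathcal J_1$. Note in particular $\mathcal I\s\mathcal J\s\mathcal J_1$. Applying Lemma~\ref{onestep} to the pair of ideals $\mathcal I\s\mathcal J_1$ and the triple $(C,D,\mathcal J_1)$, I obtain $(S,T,\mathcal J_1)\leq_2(C,D,\mathcal J_1)$ such that $(S,T)$ is $F$-unsaturated over $\mathcal I$.

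It remains to check that $(S,T)$ has the required properties. Since $S,T\s C\cup D\s B\s A$ and $S,T\in\mathcal J_1^+\s\mathcal J^+\s\mathcal I^+$, the triple $(S,T,\mathcal I)$ lies in $\mathcal C_2$ and $(S,T,\mathcal I)\leq_2 (A,A,\mathcal I)$. As $(S,T)$ is $F$-unsaturated over $\mathcal I$, property (i) of Lemma~\ref{findinga} applied with $\mathcal J$ there replaced by $\mathcal I$ yields that $(S,T)$ is $(i,j)$-saturated over $\mathcal I$. Also $S,T\in\mathcal J^+$ as already noted.

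Since the heavy lifting — the double exhaustion argument producing the pair $(A,\mathcal I)$ of ``residual minimality'' for the maps $\Sigma_0,\Sigma_1,\Gamma$ in Lemma~\ref{findinga}, and the normality/diagonal-intersection trick transferring $F$-unsaturation from a larger ideal down to a smaller one in Lemma~\ref{onestep} — has already been done, there is no real obstacle here; the only subtlety is keeping track of the three ideals $\mathcal I\s\mathcal J\s\mathcal J_1$ and checking that ``$(i,j)$-saturated over $\mathcal I$'' (rather than over $\mathcal J$) is exactly what comes out when property (i) of Lemma~\ref{findinga} is invoked with the ambient ideal $\mathcal I$ itself.
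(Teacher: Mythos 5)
Your proof is correct and follows exactly the paper's own argument: apply Lemma~\ref{findinga} to obtain $(A,\mathcal I)$, $F$, $(i,j)$, then for a given $(B,\mathcal J)\leq_1(A,\mathcal I)$ use property (ii) to get an $F$-unsaturated $(C,D)$ over some $\mathcal J_1\supseteq\mathcal J$, push $F$-unsaturation down to $\mathcal I$ via Lemma~\ref{onestep}, and conclude $(i,j)$-saturation over $\mathcal I$ from property (i). The paper is slightly terser in this last step; your explicit observation that $(S,T,\mathcal I)\leq_2(A,A,\mathcal I)$ so property (i) applies with ambient ideal $\mathcal I$ itself is exactly the bookkeeping the paper leaves implicit.
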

\begin{proof}
Let $A$, $\mathcal I$, $F$, and $(i,j)$ be as in Lemma~\ref{findinga}. Then for every $(B, \mathcal J) \leq_1 (A, \mathcal I)$, for some $(C, D, \mathcal J_1) \leq_2 (B, B, \mathcal J)$, we have that $(C, D)$ is $F$-unsaturated over $\mathcal J_1$. By Lemma~\ref{onestep}, we can find $(S, T, \mathcal J_1) \leq_2 (C, D, \mathcal J_1)$ such that $(S,T)$ is $F$-unsaturated over $\mathcal I$, and hence, also $(S, T)$ is $(i,j)$-saturated over $\mathcal I$. Since $S,T \in (\mathcal J_1)^+$ and $\mathcal J \s \mathcal J_1$, also $S,T \in \mathcal J^+$ and so the proof is completed. 
\end{proof}

\section{The main lemma}\label{sectionmainlemma}
We prove now our main lemma, which as we have mentioned earlier uses an idea we learned from Shelah's \cite{Sh:881}.

Let us recall again the objects we have fixed:
\begin{itemize}
\item $(X, \tau)$ is a regular topological space which is not left-separated and has a point-countable basis $\mathcal B$. 
\item $K$ is a positive natural number and $c:[X]^2\rightarrow K$ is a colouring of pairs from $X$.
\item $\Omega$ is an uncountable regular cardinal such that $X, \tau, \mathcal B \in H(\Omega)$.
\item $\langle U_{x,k}\mid x \in X, k< \omega\rangle$ is such that for every $x \in X$, $\langle U_{x, k} \mid k< \omega\rangle$ enumerates $\mathcal B_{\{x\}}$.
\item  $\Lambda_1$ is a fixed stationary subset of $\pom(H(\Omega))$ such that for every $M \in \Lambda_1$, $M \prec H(\Omega)$ and $X, \tau, \mathcal B \in M$, and $\overline{M \cap X} \setminus M \neq \emptyset$. 
\item $F: \Lambda_1 \rightarrow X$ is such that for every $M \in \Lambda_1$, $F(M) \in \overline{M \cap X} \setminus M$.
\item $\mathbb I$ is the collection of fine normal ideals on $\pom(H(\Omega))$.
\end{itemize}

\begin{lemma}\label{Shelahidea}
There is an $\mathcal I \in \mathbb I$ and an $A \in \mathcal I^+$ such that $(A, \mathcal I)$ is a winning pair for $c$.     
\end{lemma}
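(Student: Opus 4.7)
Let $(A_0, \mathcal I_0) \in \mathcal C_1$ and $(i,j) \in K^2$ be as supplied by Lemma~\ref{firstmainlemma}. The plan is to show that $(A_0, \mathcal I_0)$ itself is an $(i,j)$-winning pair for $c$, which yields the lemma.

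Fix an arbitrary $B \subseteq A_0$ with $B \in \mathcal I_0^+$; the goal is to produce an $(i,j)$-winner below $B$ over $\mathcal I_0$. For each $M \in B$, writing $x := F(M)$, let $\mathcal T(M)$ be the tree, ordered by end-extension, whose nodes are finite sequences $\langle T_0,\dots,T_{n-1}\rangle$ such that each $T_l \subseteq B$ lies in $\mathcal I_0^+$; every $N \in T_l$ satisfies $F(N) \ne x$, $c(x,F(N)) = i$, and $F(N) \in U_{x,l}$; and $(T_l, T_m)$ is $(i,j)$-saturated over $\mathcal I_0$ for every $m < l$. An infinite branch through some $\mathcal T(M)$ provides precisely the witness that $F(M)$ is an $(i,j)$-winner below $B$ over $\mathcal I_0$. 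Assume for contradiction that each $\mathcal T(M)$ is well-founded, and define $f : \pom(H(\Omega)) \to \ord$ by setting $f(M) := \rk(\mathcal T(M))$ for $M \in B$ and $f(M) := 0$ otherwise.

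Invoke Lemma~\ref{JDFfacts} to obtain the fine normal ideal $\mathcal I_1 := \mathbf I[f, \mathcal I_0] \supseteq \mathcal I_0$. A first sub-claim is that $B \in \mathcal I_1^+$; otherwise $\rk_{\mathcal I_0+B}(f) > \rk_{\mathcal I_0}(f)$, and a careful use of Fact~\ref{lemma31} together with the normality of $\mathcal I_0$ allows one to produce an extension of the empty node in some $\mathcal T(M)$ whose rank already matches that of $\mathcal T(M)$, contradicting well-foundedness. With $B \in \mathcal I_1^+$ in hand, apply Lemma~\ref{firstmainlemma} with $\mathcal J = \mathcal I_1$ to obtain $S, T \subseteq B$ in $\mathcal I_1^+$ such that $(S,T)$ is $(i,j)$-saturated over $\mathcal I_0$. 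Using Lemma~\ref{lemma44} to stabilize $U_{F(M),0}$ on a positive subset, Lemma~\ref{lemma45} to discard $N$ with $F(N) = F(M)$, and Lemma~\ref{firstbasicproperties}(i) to localize colours, single out $M^* \in S$ and a trimmed $T_0 \subseteq T$ in $\mathcal I_0^+$ so that $\langle T_0\rangle$ is a legitimate node of $\mathcal T(M^*)$.

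The proof then iterates: given a node $\langle T_0,\dots,T_{n-1}\rangle \in \mathcal T(M^*)$, rerun the preceding analysis — replacing $B$ by $T_{n-1}$ and refreshing the rank-based extension of the ideal — to produce an extension $\langle T_0,\dots,T_n\rangle$ whose rank in $\mathcal T(M^*)$ strictly drops. Iterating $\omega$ many times manufactures an infinite descending sequence of ordinals, contradicting well-foundedness. The main obstacle, and the technical heart of the argument, is orchestrating this iteration so that (a) the newly produced $T_n$ is simultaneously $(i,j)$-saturated over $\mathcal I_0$ with every earlier $T_l$ for $l < n$, and (b) the pointwise tree ranks in $\mathcal T(M^*)$ descend in lockstep with the function-level ranks manipulated by the $\mathbf I[f, \mathcal I_0]$ operator developed in Subsection~\ref{subsectionJdf}. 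This coordination, inspired by Shelah's rank techniques in \cite{Sh:881}, is precisely what replaces the precipitousness-type games used in \cite{MR4190059} and \cite{eisworth2023galvins}.
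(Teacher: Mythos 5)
Your setup and ingredients are the right ones — Lemma~\ref{firstmainlemma}, the well-founded trees, the $\mathbf I[f,\cdot]$ operator from Subsection~\ref{subsectionJdf} — but the closing argument has a genuine gap, and it is exactly the part you flag as ``the technical heart'' without resolving.

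The first issue is structural. Your tree $\mathcal T(M)$ has nodes labelled by single sets $\langle T_0,\dots,T_{n-1}\rangle$, but the paper's tree $\mathcal T$ (and its relativizations $\mathcal T^M$) uses \emph{pairs} $(S_k,T_k)$, with the side condition that $M\in S_k$ and $S_k\cup T_k\subseteq S_{k-1}$. The $S_k$ component is not decorative: it is precisely what supplies the second coordinate $Z=S_{l-1}$ of the paper's ``creatures'' $(\sigma,Z,f)$, and hence what tells you which $M$'s are still live at a given node. Dropping it leaves you with no positive set over which to run the rank calculus at step $n>0$.

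The second and more serious issue is the ``iterate $\omega$ many times'' plan. You single out one $M^*\in S$ at the first step and then propose to descend inside the single well-founded tree $\mathcal T(M^*)$. But at step $n$, after replacing $B$ by $T_{n-1}$ and refreshing the ideal, Lemma~\ref{firstmainlemma} hands you new positive sets $S,T\subseteq T_{n-1}$; there is no reason the previously fixed $M^*$ should lie in the new $S$, nor that it even lies in $T_{n-1}$. Each rerun of the analysis picks a (possibly different) $M$, so you never get a branch through a \emph{fixed} $\mathcal T(M^*)$ and never reach the advertised infinite descending sequence. This is exactly the coordination problem you label (a)--(b); left unresolved, the proof does not close. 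The paper sidesteps the iteration entirely: it defines the set $\mathbf P$ of all creatures $(\sigma,Z,f)$ simultaneously, assigns each a \emph{hybrid rank} $\rk_{\mathcal I+Z}(f)$ where $f(M)=\rk^M(\sigma)$, and chooses a single $\mathfrak p$ of minimal hybrid rank. One extension step, using $\mathcal J:=\mathbf I[f,\mathcal I+Z]$ and Lemma~\ref{firstmainlemma} applied to $(Z,\mathcal J)$, then produces a creature of strictly smaller hybrid rank — an immediate contradiction with minimality, with no infinite recursion and no fixed $M^*$ to lose track of. The pointwise decrease $\rk^M(\widetilde\sigma)<\rk^M(\sigma)$ for $M\in S$ feeds into the hybrid rank via $\widetilde f<_{\mathcal I+S}f$; the minimality of the hybrid rank over all of $\mathbf P$ is what substitutes for your $\omega$-long descent. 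Your sub-claim ``$B\in\mathcal I_1^+$'' is likewise not needed in the paper's formulation, since the base creature $(\langle\rangle,B,f)$ is always in $\mathbf P$ and minimality is taken over $\mathbf P$, not over ideals extending $\mathcal I_0$ on which $B$ stays positive.
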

\begin{proof} 
Using Lemma~\ref{firstmainlemma}, let $(A, \mathcal I) \in \mathcal C_1$ and $(i,j) \in K^2$ be such that 
\begin{itemize}
\item for every $\mathcal J\in \mathbb I$ with $\mathcal I \subseteq \mathcal J$,
\item for every $B \subseteq A$ with $B \in \mathcal J^+$,
\end{itemize}
there are $S, T \s B$ with $S, T \in \mathcal J^+$ such that $(S, T)$ is $(i,j)$-saturated over $\mathcal I$.

We will prove that $(A, \mathcal I)$ is an $(i,j)$-winning pair. Let us recall from Definition~\ref{winnerdefinition} what this requires. We need to show that for every $B \in \mathcal I^+$ with $B \s A$, there is an $(i,j)$-winner $x$ below $B$ over $\mathcal I$.  This in turn requires us to show that there is an $M \in B$ with $F(M) =x$ and there is a sequence $\langle T_k \mid k< \omega\rangle$ such that 
\begin{itemize}
\item for every $k<  \omega$, $T_k \s B$ and $T_k \in \mathcal I^+$;
    \item for every $N \in \bigcup_{k< \omega}T_k$, $F(N)\neq x$ and $c(x, F(N)) = i$;
    \item for every $k< l < \omega$, $(T_l, T_k)$ is $(i, j)$-saturated over $\mathcal I$;
    \item for every $k< \omega$, for every $N \in T_k$, $F(N) \in U_{x,k}$.
\end{itemize}
So, fix $B \in \mathcal I^+$ with $B \s A$.
We want to show that there is an $(i,j)$-winner below $B$ over $\mathcal I$, so suppose towards a contradiction that there is no such element.

Let $\mathcal T$ be the set of all finite sequences $\sigma$ such that for some $l< \omega$, 
$$\sigma= \langle (S_k,T_k)\mid k<l \rangle,$$ 
and such that 
\begin{enumerate}
    \item for all $k< l$, $S_k, T_k \in \mathcal I^+$ with $S_k \cup T_k \s B$;
    \item for all $k< l$, $(S_k, T_k)$ is $(i,j)$-saturated over $ \mathcal I$;
    \item for all $0< k< l$, $S_k \cup T_k \s S_{k-1}$;
    \item for all $k< l$, there is a $U\in \mathcal B$ such that for every $M \in S_k \cup T_k$, $U= U_{F(M), k}$;
    \item for all $k< l$ and every $M\in S_k$, $\{N \in T_k \mid c(F(M), F(N)) = i\} \in  \mathcal I^+$.
\end{enumerate}
Note that $\mathcal T$ is closed under initial segments, so it is a tree of finite sequences. 

For every $M \in B$, let $\mathcal T^M$ consist of those $\sigma \in \mathcal T$ such that if $\sigma = \langle (S_k, T_k) \mid k< l \rangle$ for some $l< \omega$, then for every $k< l$, $M\in S_k$. Note that $\mathcal T^M$ is also closed under initial segments, so it is a subtree of $\mathcal T$. In particular for every $M \in B$, the empty sequence $\langle \rangle$ is in $\mathcal T^M$.
\begin{claim}
For every $M \in B$, $\mathcal T^M$ is well-founded.    
\end{claim}
\begin{why}
Suppose that this is not so, so let $M \in B$ be such that we can find an infinite sequence 
$$b= \langle (S_k, T_k)\mid k< \omega \rangle$$
such that all of the initial segments of $b$ are in $\mathcal T^M$. 
It follows that $M \in \bigcap_{k< \omega} S_k$. Let $x:= F(M)$. Now for every $k< \omega$, let 
$$\widetilde T_k:= \{N \in T_k\mid c(x, F(N))= i\}.$$
Then for every $k< \omega$ we have that  $\widetilde T_k\in \mathcal I^+$ and $\widetilde T_k\s T_k \s B$, and also, for every $N \in \widetilde T_k$, $x \neq F(N)$ and $c(x, F(N))=i$.

Also, for $k< l< \omega$, we have that 
$$\widetilde T_l\s T_l\s S_{l-1}\s S_k,$$
and then as $(S_k,T_k)$ is $(i,j)$-saturated over $\mathcal I$, also $ (\widetilde T_l,\widetilde T_k)$ is $(i,j)$-saturated over $\mathcal I$.

Lastly, for $k< \omega$,  as $\{M\} \cup \widetilde T_k \s S_k\cup T_k$, for every  $N \in \widetilde T_k$ we have that $U_{F(N), k} = U_{F(M), k}= U_{x,k}$, and hence also, $F(N) \in U_{x,k}$. 

Altogether, we have that $x$ is an $(i,j)$-winner below $B$ over $\mathcal I$ as witnessed by $M$ and the sequence $\langle \widetilde T_k \mid k< \omega\rangle$. This is a contradiction.

\end{why}
For every $M \in B$ the tree $\mathcal T^M$ is well-founded, so we define a rank $\rk^M: \mathcal T^M\rightarrow \ord$ via: for $\sigma \in \mathcal T^M$, 
$$\rk^M(\sigma):= \sup\{\rk^M(\sigma')+1 \mid \sigma' \in \mathcal T^M, \, \sigma \sqsubset  \sigma'\}.$$

We now get a chance to use a beautiful idea of Shelah \cite{Sh:881}. 
Let $\mathbf P$ be the set of all \emph{creatures} $\mathfrak p$ where a creature is a triple $\mathfrak p= (\sigma, Z, f)$ such that
\begin{itemize}
    \item $\sigma \in \mathcal T$;
    \item $Z \in \mathcal I^+$ with $Z \s B$, and more specifically, if $|\sigma| = 0$ then $Z:= B$, and if $\sigma = \langle (S_k,T_k) \mid k< l \rangle$ for some $0< l< \omega$, then $Z:= S_{l-1}$;
    \item $f \in {}^Z\ord$ is the function $M\mapsto \rk^M(\sigma)$, which is legitimate as $\sigma \in \mathcal T^M$ for every $M\in Z$.
\end{itemize}
Note that $\mathbf P$ is non-empty, since $(\langle\rangle, B, f) \in \mathbf P$ where $f \in {}^B \ord$ is the function $M \mapsto \rk^M(\langle \rangle)$. 

Now, given a creature $\mathfrak p= (\sigma, Z, f)$ in $\mathbf P$, as $\mathcal I+Z$ is an $\aleph_1$-complete ideal and $Z \in (\mathcal I+Z)^*$ and $f \in {}^Z\ord$, the rank $\rk_{\mathcal I+Z}(f)$ is well-defined, and we will call this the \emph{hybrid rank} of $\mathfrak p$ to avoid confusion among all the ranks.\footnote{The terminology is due to Assaf Rinot from \cite{safblog}.}

So, among all the creatures in $\mathbf P$, let $\mathfrak p= (\sigma, Z, f)$ be chosen such that its hybrid rank is minimal. Let $k:= |\sigma|$.

Now consider the ideal $\mathcal J:= \mathbf I[f, \mathcal I+Z]$. 
By Lemma~\ref{JDFfacts},
$$\mathcal I \s \mathcal I+Z \s \mathcal J,$$
and $\mathcal J$ is a fine normal ideal on $\pom(H(\Omega))$ so $\mathcal J\in \mathbb I$. 
As $Z \in \mathcal J^*$,  or more relevant, as $(Z, \mathcal J) \in \mathcal C_1$, by Lemma~\ref{lemma44} there is $U\in \mathcal B$ and $(Y,\mathcal J) \leq_1 (Z, \mathcal J)$ such that for every $M \in Y$, $U_{F(M),k} = U$. Now, $(Y, \mathcal J) \leq_1 (A, \mathcal I)$, and by the choice of $(A, \mathcal I)$ we can find $S', T \s Y$ such that $S', T \in \mathcal J^+$ and $(S', T)$ is $(i,j)$-saturated over $\mathcal I$. 
We have by Lemma~\ref{firstbasicproperties} that 
$$\forall^{\mathcal I} M\in S' \exists^{\mathcal I} N \in T [c(F(M), F(N)) =i].$$
So, let $S \s S'$ be such that $S' \setminus S\in \mathcal I$ and 
$$\forall M\in S\exists^{\mathcal I} N \in T [c(F(M), F(N)) =i].$$
Since $\mathcal I \s \mathcal J$ and $S' \in \mathcal J^+$, note that $S \in \mathcal J^+$ too, and of course $(S, T)$ is also $(i,j)$-saturated over $\mathcal I$. 

Let $\widetilde \sigma:= \sigma {}^{\smallfrown}\langle (S, T)\rangle$, and then we have that $\widetilde \sigma \in \mathcal T$.
Now, let $\widetilde {\mathfrak p}:= (\widetilde \sigma, \widetilde Z, \widetilde f)$, where
\begin{itemize}
    \item $\widetilde Z:= S$, and 
\item $\widetilde f \in {}^{S}\ord$ is the function $M\mapsto \rk^M(\widetilde \sigma)$.
\end{itemize}
Clearly $\widetilde{\mathfrak p} \in \mathbf P$. Now, let us compare $\rk_{\mathcal I+Z}(f)$ and $\rk_{\mathcal I+\widetilde Z}(\widetilde f)$. 
As $S \in \mathcal J^+ =( \mathbf I[f,\mathcal I+Z])^+$ and $S \s Z$, we have that
$$\rk_{\mathcal I+Z}(f) = \rk_{(\mathcal I+Z)+S}(f)  = \rk_{\mathcal I+S}(f) > \rk_{\mathcal I+S}(\widetilde f) = \rk_{\mathcal I+\widetilde Z}(\widetilde f).$$

To see why the inequality is true, note that for every $M \in S$, as $\sigma, \widetilde \sigma \in \mathcal T^M$ and $\sigma \sqsubset \widetilde \sigma$, 
$$f(M) = \rk^M(\sigma) > \rk^M(\widetilde \sigma) = \widetilde f(M),$$
so $\widetilde f <_{\mathcal I+S} f$. 
So we have that the hybrid rank of $\widetilde {\mathfrak p}$ is strictly less than the hybrid rank of $\mathfrak p$. This contradicts the choice of $\mathfrak p$ and we finish.
\end{proof}

We are now in a position to prove Theorem~D.
\begin{corollary} \label{maincorollary} Let $(X, \tau)$ be a regular topological space which is not left-separated and has a point-countable basis. Let $K$ be a positive natural number and $c:[X]^2 \rightarrow K$ a colouring of pairs of elements of $X$. Then there is a $Y\s X$ homeomorphic to $\mathbb Q$ such that $|c``[Y]^2| \leq 2$.
\end{corollary}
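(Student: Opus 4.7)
The plan is to simply assemble the two main technical results that have been proved in the preceding sections. All of the substantive work --- establishing the existence of a saturated pair via the exhaustion arguments of Section~\ref{sectionstartingpoint}, and then promoting it to a winning pair via the Shelah-style rank argument in Lemma~\ref{Shelahidea} --- has already been done, and the remaining step is essentially a one-line invocation of Theorem~\ref{RaghavanTodorcevicconstruction}.

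More precisely, first I would verify that the hypotheses of Lemma~\ref{Shelahidea} are in force: $(X, \tau)$ is regular, not left-separated, and carries a point-countable basis $\mathcal B$, and $c : [X]^2 \to K$ is the given colouring, all of which match the standing assumptions set up at the start of Section~\ref{sectionstart}. Lemma~\ref{Shelahidea} then supplies an $\mathcal I \in \mathbb I$ and a set $A \in \mathcal I^+$ (in fact $A \s \Lambda_1$, since Lemma~\ref{Shelahidea} was proved via Lemma~\ref{firstmainlemma} which delivers members of $\mathcal C_1$) such that $(A, \mathcal I)$ is a winning pair for $c$ in the sense of Definition~\ref{winnerdefinition}.

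Next, I would feed $(A, \mathcal I)$ directly into Theorem~\ref{RaghavanTodorcevicconstruction}. That theorem asserts precisely that the existence of a winning pair for $c$ yields a $Y \s X$ which is homeomorphic to $\mathbb Q$ and satisfies $|c``[Y]^2| \leq 2$. So the conclusion of Corollary~\ref{maincorollary} is immediate.

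In principle there is no obstacle: everything hinges on the hypotheses of Lemma~\ref{Shelahidea} being exactly the hypotheses of Corollary~\ref{maincorollary} (which is how the setup in Section~\ref{sectionstart} was arranged, via Fleissner's Theorem~\ref{fleissnertheorem} to produce the stationary set $\Lambda_0$ and hence $\Lambda_1$ and the map $F$). The only thing to be careful about is bookkeeping, namely checking that the $A$ produced by Lemma~\ref{Shelahidea} really does fit the $\mathcal C_1$-framework used by Theorem~\ref{RaghavanTodorcevicconstruction}; since Lemma~\ref{Shelahidea} was derived from Lemma~\ref{firstmainlemma} whose output lives in $\mathcal C_1$, this is automatic. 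The proof is therefore a short paragraph reading: ``Apply Lemma~\ref{Shelahidea} to obtain a winning pair $(A, \mathcal I) \in \mathcal C_1$ for $c$, and then apply Theorem~\ref{RaghavanTodorcevicconstruction} to this pair.''
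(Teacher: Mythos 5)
Your proposal is correct and matches the paper's own proof exactly: the paper invokes Lemma~\ref{Shelahidea} to obtain a winning pair $(A,\mathcal I)$ and then feeds it to Theorem~\ref{RaghavanTodorcevicconstruction}. The extra bookkeeping remark about $A \s \Lambda_1$ and $\mathcal C_1$ is accurate and, if anything, slightly more careful than the paper's two-line proof.
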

\begin{proof} By Lemma~\ref{Shelahidea}, there is an $\mathcal I\in \mathbb I$ and an $A \in \mathcal I^+$ such that $(A, \mathcal I)$ is a winning pair for $c$. By Theorem~\ref{RaghavanTodorcevicconstruction} this gives the existence of a $Y\s X$ as required.
\end{proof}

\section*{Acknowledgements}
This work was supported by the Israel Science Foundation (grant agreement 665/20). 
The result was presented in the Set Theory Seminar of Bar-Ilan University on 28\textsuperscript{th} March 2024. 
I would like to thank the participants for this opportunity and their patience. 
I would like to thank Assaf Rinot for many suggestions which have improved the article. 
I would like to thank Todd Eisworth for his interest in the `very weak' precipitous game: this will appear in the final version.


\begin{thebibliography}{10}

\bibitem{MR867644}
{\sc Baumgartner, J.~E.}
\newblock Partition relations for countable topological spaces.
\newblock {\em J. Combin. Theory Ser. A 43}, 2 (1986), 178--195.

\bibitem{MR2628717}
{\sc Devlin, D.~C.}
\newblock {\em Some {P}artition {T}heorems {A}nd {U}ltrafilters {O}n {O}mega}.
\newblock ProQuest LLC, Ann Arbor, MI, 1980.
\newblock Thesis (Ph.D.)--Dartmouth College.

\bibitem{MR4680287}
{\sc Dobrinen, N.}
\newblock Ramsey theory of homogeneous structures: current trends and open problems.
\newblock In {\em I{CM}---{I}nternational {C}ongress of {M}athematicians. {V}ol. {III}. {S}ections 1--4}. EMS Press, Berlin, [2023] \copyright 2023, pp.~1462--1486.

\bibitem{MR4253680}
{\sc Dobrinen, N., and Gasarch, W.}
\newblock When {R}amsey {T}heory fails settle for more colors (big {R}amsey degrees!).
\newblock {\em ACM SIGACT News 51}, 4 (2020), 30--46.

\bibitem{eisworth2023galvins}
{\sc Eisworth, T.}
\newblock Galvin's conjecture and weakly precipitous ideals, 2023.

\bibitem{MR280381}
{\sc Erd\H{o}s, P., and Hajnal, A.}
\newblock Unsolved problems in set theory.
\newblock In {\em Axiomatic {S}et {T}heory ({P}roc. {S}ympos. {P}ure {M}ath., {V}ol. {XIII}, {P}art {I}, {U}niv. {C}alifornia, {L}os {A}ngeles, {C}alif., 1967)\/} (1971), Proc. Sympos. Pure Math., XIII, Part I, Amer. Math. Soc., Providence, RI, pp.~17--48.

\bibitem{MR357122}
{\sc Erd\H{o}s, P., and Hajnal, A.}
\newblock Unsolved and solved problems in set theory.
\newblock In {\em Proceedings of the {T}arski {S}ymposium ({P}roc. {S}ympos. {P}ure {M}ath., {V}ol. {XXV}, {U}niv. {C}alifornia, {B}erkeley, {C}alif., 1971)\/} (1974), Proc. Sympos. Pure Math., Vol. XXV, Published for the Association for Symbolic Logic by the American Mathematical Society, Providence, RI, pp.~269--287.

\bibitem{MR202613}
{\sc Erd\H{o}s, P., Hajnal, A., and Rado, R.}
\newblock Partition relations for cardinal numbers.
\newblock {\em Acta Math. Acad. Sci. Hungar. 16\/} (1965), 93--196.

\bibitem{MR65615}
{\sc Erd\H{o}s, P., and Rado, R.}
\newblock Combinatorial theorems on classifications of subsets of a given set.
\newblock {\em Proc. London Math. Soc. (3) 2\/} (1952), 417--439.

\bibitem{MR167422}
{\sc Erd\H{o}s, P., and Tarski, A.}
\newblock On some problems involving inaccessible cardinals.
\newblock In {\em Essays on the foundations of mathematics}. Magnes Press, The Hebrew University, Jerusalem, 1961, pp.~50--82.

\bibitem{MR825729}
{\sc Fleissner, W.~G.}
\newblock Left separated spaces with point-countable bases.
\newblock {\em Trans. Amer. Math. Soc. 294}, 2 (1986), 665--677.

\bibitem{galvin}
{\sc Galvin, F.}
\newblock Letter to {R}ichard {L}aver, March 19, 1970.

\bibitem{MR376359}
{\sc Galvin, F., and Hajnal, A.}
\newblock Inequalities for cardinal powers.
\newblock {\em Ann. of Math. (2) 101\/} (1975), 491--498.

\bibitem{Sh:23}
{\sc Galvin, F., and Shelah, S.}
\newblock {Some counterexamples in the partition calculus}.
\newblock {\em J. Combinatorial Theory Ser. A 15\/} (1973), 167--174.

\bibitem{MR264585}
{\sc Hajnal, A., and Juh\'{a}sz, I.}
\newblock Discrete subspaces of topological spaces. {II}.
\newblock {\em Indag. Math. 31\/} (1969), 18--30.
\newblock Nederl. Akad. Wetensch. Proc. Ser. A {{\bf{7}}2}.

\bibitem{MR1994835}
{\sc Kanamori, A.}
\newblock {\em The higher infinite}, second~ed.
\newblock Springer Monographs in Mathematics. Springer-Verlag, Berlin, 2003.
\newblock Large cardinals in set theory from their beginnings.

\bibitem{MR2069032}
{\sc Larson, P.~B.}
\newblock {\em The stationary tower}, vol.~32 of {\em University Lecture Series}.
\newblock American Mathematical Society, Providence, RI, 2004.
\newblock Notes on a course by W. Hugh Woodin.

\bibitem{MR3728284}
{\sc Munkres, J.~R.}
\newblock {\em Topology}.
\newblock Prentice Hall, Inc., Upper Saddle River, NJ, 2000.
\newblock Second edition of [ MR0464128].

\bibitem{MR4190059}
{\sc Raghavan, D., and Todorcevic, S.}
\newblock Proof of a conjecture of {G}alvin.
\newblock {\em Forum Math. Pi 8\/} (2020), e15, 23.

\bibitem{MR4579382}
{\sc Raghavan, D., and Todorcevic, S.}
\newblock Galvin's problem in higher dimensions.
\newblock {\em Proc. Amer. Math. Soc. 151}, 7 (2023), 3103--3110.

\bibitem{MR1576401}
{\sc Ramsey, F.~P.}
\newblock On a {P}roblem of {F}ormal {L}ogic.
\newblock {\em Proc. London Math. Soc. (2) 30}, 4 (1929), 264--286.

\bibitem{safblog}
{\sc Rinot, A.}
\newblock {D}ushnik-{M}iller for singular cardinals (part 2).
\newblock \url{https://blog.assafrinot.com/?p=628}.
\newblock Accessed: 2024-05-14.

\bibitem{MR3271280}
{\sc Rinot, A.}
\newblock Chain conditions of products, and weakly compact cardinals.
\newblock {\em Bull. Symb. Log. 20}, 3 (2014), 293--314.

\bibitem{Sh:71}
{\sc Shelah, S.}
\newblock {A note on cardinal exponentiation}.
\newblock {\em J. Symbolic Logic 45}, 1 (1980), 56--66.

\bibitem{Sh:276}
{\sc Shelah, S.}
\newblock {Was Sierpi\'nski right? I}.
\newblock {\em Israel J. Math. 62}, 3 (1988), 355--380.

\bibitem{Sh:288}
{\sc Shelah, S.}
\newblock {Strong partition relations below the power set: consistency; was Sierpi\'nski right? II}.
\newblock In {\em {Sets, graphs and numbers (Budapest, 1991)}}, vol.~60 of {\em Colloq. Math. Soc. J\'anos Bolyai}. North-Holland, Amsterdam, 1992, pp.~637--668.
\newblock \href{https://arxiv.org/abs/math/9201244}{arXiv: math/9201244}.

\bibitem{Sh:g}
{\sc Shelah, S.}
\newblock {\em {Cardinal arithmetic}}, vol.~29 of {\em Oxford Logic Guides}.
\newblock The Clarendon Press, Oxford University Press, New York, 1994.

\bibitem{Sh:589}
{\sc Shelah, S.}
\newblock {Applications of PCF theory}.
\newblock {\em J. Symbolic Logic 65}, 4 (2000), 1624--1674.
\newblock \href{https://arxiv.org/abs/math/9804155}{arXiv: math/9804155}.

\bibitem{Sh:546}
{\sc Shelah, S.}
\newblock {Was Sierpi\'nski right? IV}.
\newblock {\em J. Symbolic Logic 65}, 3 (2000), 1031--1054.
\newblock \href{https://arxiv.org/abs/math/9712282}{arXiv: math/9712282}.

\bibitem{Sh:881}
{\sc Shelah, S.}
\newblock {The Erd\H{o}s-Rado arrow for singular cardinals}.
\newblock {\em Canad. Math. Bull. 52}, 1 (2009), 127--131.
\newblock \href{https://arxiv.org/abs/math/0605385}{arXiv: math/0605385}.

\bibitem{MR1556708}
{\sc Sierpi\'{n}ski, W.}
\newblock Sur un probl\`eme de la th\'{e}orie des relations.
\newblock {\em Ann. Scuola Norm. Super. Pisa Cl. Sci. (2) 2}, 3 (1933), 285--287.

\bibitem{Sierpinski1920}
{\sc Sierpiński, W.}
\newblock Sur une propriété topologique des ensembles dénombrables denses en soi.
\newblock {\em Fundamenta Mathematicae 1}, 1 (1920), 11--16.

\bibitem{MR2304318}
{\sc Todorcevic, S.}
\newblock Universally meager sets and principles of generic continuity and selection in {B}anach spaces.
\newblock {\em Adv. Math. 208}, 1 (2007), 274--298.

\bibitem{MR2355670}
{\sc Todorcevic, S.}
\newblock {\em Walks on ordinals and their characteristics}, vol.~263 of {\em Progress in Mathematics}.
\newblock Birkh\"{a}user Verlag, Basel, 2007.

\bibitem{MR2603812}
{\sc Todorcevic, S.}
\newblock {\em Introduction to {R}amsey spaces}, vol.~174 of {\em Annals of Mathematics Studies}.
\newblock Princeton University Press, Princeton, NJ, 2010.

\bibitem{todorcevicbasis}
{\sc Todorcevic, S.}
\newblock Basis problems and expansions, {III}.
\newblock {\em Fields Institute Seminar\/} (March 17, 2023).

\bibitem{MR908147}
{\sc Todor\v{c}evi\'{c}, S.}
\newblock Partitioning pairs of countable ordinals.
\newblock {\em Acta Math. 159}, 3-4 (1987), 261--294.

\bibitem{MR1297180}
{\sc Todor\v{c}evi\'{c}, S.}
\newblock Some partitions of three-dimensional combinatorial cubes.
\newblock {\em J. Combin. Theory Ser. A 68}, 2 (1994), 410--437.

\bibitem{todorcevicweiss}
{\sc Todor\v{c}evi\'{c}, S., and Weiss, W.}
\newblock Partitioning metric spaces, September 1995.

\bibitem{MR1083599}
{\sc Weiss, W.}
\newblock Partitioning topological spaces.
\newblock In {\em Mathematics of {R}amsey theory}, vol.~5 of {\em Algorithms Combin.} Springer, Berlin, 1990, pp.~154--171.

\bibitem{MR2723878}
{\sc Woodin, W.~H.}
\newblock {\em The axiom of determinacy, forcing axioms, and the nonstationary ideal}, revised~ed., vol.~1 of {\em De Gruyter Series in Logic and its Applications}.
\newblock Walter de Gruyter GmbH \& Co. KG, Berlin, 2010.

\end{thebibliography}
\end{document}